\newtheorem{theorem}{Theorem}[section]
\newtheorem{lemma}[theorem]{Lemma}
\newtheorem{proposition}[theorem]{Proposition}
\newtheorem{corollary}[theorem]{Corollary}
\theoremstyle{definition}
\newtheorem{notation}[theorem]{Notation}
\newtheorem{definition}[theorem]{Definition}
\newtheorem{conjecture}[theorem]{Conjecture}
\theoremstyle{remark}
 \newtheorem{remark}[theorem]{Remark}
\numberwithin{equation}{section}
\newcommand{\Rtilde}{\widetilde{R}}
\newcommand{\Stilde}{\widetilde{S}}
\newcommand{\Ttilde}{\widetilde{T}}
\newcommand{\partialtilde}{\widetilde{\partial}_2}
\newcommand{\hhalf}{\textstyle{\frac{1}{2}}}
\newcommand{\half}{{\frac{1}{2}}}
\DeclareMathOperator{\gr}{{gr}}
\DeclareMathOperator{\GKdim}{{GKdim}}
\DeclareMathOperator{\Ext}{{Ext}}
\begin{document}
\title{The prime spectrum of the   Drinfeld  double of the Jordan plane}
\author{K. A. Brown}
\address{
School of Mathematics and Statistics, University of Glasgow, Glasgow G12 8QQ, Scotland}
\email{ken.brown@glasgow.ac.uk}
\thanks{}
\author{J. T. Stafford}
\address{School of Mathematics,  The University of Manchester,   Manchester M13 9PL,
	England}
\email{Toby.Stafford@manchester.ac.uk}
\thanks{Both authors are partially supported by Leverhulme Emeritus Fellowships, respectively EM-2017-081 and EM-2019-015. The first author thanks Nicolas Andruskiewitsch, Ivan Angiono and Hector Pena Pollastri for helpful comments and for sharing early versions of their work.}
\subjclass[2010]{Primary 16T05, 16D25; Secondary 16T20, 16S40,  17B37.}

 
 \begin{abstract}The Hopf algebra $\mathcal{D}$ which is the subject of this paper can be viewed as a Drinfeld double of the bosonisation of the Jordan plane. Its prime and primitive spectra are completely determined. As a corollary of this analysis it is shown that $\mathcal{D}$ satisfies the Dixmier-Moeglin Equivalence, leading to the formulation of a conjecture on the validity of this equivalence for pointed Noetherian Hopf algebras.\end{abstract}

\maketitle

\section{Introduction}\label{intro}
\noindent{\bf 1.1.} Throughout, $k$ will denote an   algebraically closed field of characteristic 0. The Hopf $k$-algebra $\mathcal{D}$ of the title was defined and some initial properties were derived in \cite{AP}, with further results in \cites{ADPP,AP2}. Our focus here is on the prime and primitive spectra of $\mathcal{D}$, which we completely determine. The Hopf algebra $\mathcal{D}$ is a pointed affine noetherian domain of Gelfand-Kirillov dimension 6 whose definition is recalled in $\S$\ref{notation}. It is a beautiful algebra with a number of striking properties which   make it worthy of study from several perspectives, three of which we briefly outline in $\S\S$1.3-1.5.  First we summarise our results and explain where they are located. 

\medskip

\noindent{\bf 1.2. Results.} It was proved in \cite[Theorem~4.10]{ADPP}, and  explained in detail here in Lemma~\ref{normal} 
and Theorem~\ref{before}(iv), that the centre $Z(\mathcal{D})$ of $\mathcal{D}$ is generated by elements $z, \omega$ and $\theta$ with $z\theta = \omega^2$. Thus $\mathrm{Maxspec}(Z(\mathcal{D}))$ has one singular point, namely 
$\mathfrak{m}_0 := \langle z,\omega,\theta \rangle$. There is one other distinctive maximal ideal of the centre, namely 
$$\mathfrak{m}_+ \; := \; Z(\mathcal{D}) \cap \mathcal{D}^+ \; = \; \langle z-16, \omega + 16, \theta - 16 \rangle, $$
where $\mathcal{D}^+$ is the augmentation ideal of $\mathcal{D}$. Finally, let $K$ denote the kernel of the Hopf algebra surjection $\pi:\mathcal{D} \longrightarrow U(\mathfrak{sl}(2,k))$, mentioned in $\S$1.3, so $K$ is a Hopf ideal which is described in Theorem~\ref{before}(i),(ii).

The main results of this paper 
are given by the following theorems and give a complete description of the prime and primitive ideals of $\mathcal{D}$. 
\begin{theorem}\label{main1} Retain the above notation.
  The primitive ideals of $\mathcal{D}$ are:
\begin{itemize}
\item[(I)] the maximal ideals $\mathfrak{m}\mathcal{D}$ for $\mathfrak{m} \in \mathrm{Maxspec}(Z(\mathcal{D}))$, with $\mathfrak{m} \neq \mathfrak{m}_0,\mathfrak{m}_+$; 
\item[(II)] the primitive ideals containing $\mathfrak{m}_+\mathcal{D}$, namely $\mathfrak{m}_+\mathcal{D}$ itself together with 
$$\mathcal{P} := \pi^{-1}(\mathrm{Privspec}(U(\mathfrak{sl}(2,k))));$$
\item[(III)] the unique prime ideal $P_0$ containing $\mathfrak{m}_0\mathcal{D}$, which has $ \mathfrak{m}_0\mathcal{D} = (P_0)^2 \subsetneq P_0$.
\end{itemize}
\end{theorem}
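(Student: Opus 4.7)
The plan is to stratify $\mathrm{Prim}(\mathcal{D})$ by the contraction map $P\mapsto P\cap Z(\mathcal{D})$. The first step is to establish that $P\cap Z(\mathcal{D})\in\mathrm{Maxspec}(Z(\mathcal{D}))$ for every primitive $P$: since $\mathcal{D}$ is affine noetherian of finite Gelfand-Kirillov dimension, this is a standard consequence of Schur's lemma together with a countable generation (or uncountable base field) argument, presumably set up earlier in the paper. Granted this, the geometry of $\mathrm{Maxspec}(Z(\mathcal{D}))$---the quadric cone $z\theta=\omega^2$ with unique singular point $\mathfrak{m}_0$ and distinguished smooth point $\mathfrak{m}_+$---reduces the problem to analysing $\mathcal{D}/\mathfrak{m}\mathcal{D}$ for each $\mathfrak{m}\in\mathrm{Maxspec}(Z(\mathcal{D}))$, yielding the three cases of the theorem.

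For a generic smooth point $\mathfrak{m}\notin\{\mathfrak{m}_0,\mathfrak{m}_+\}$, the aim is to show $\mathcal{D}/\mathfrak{m}\mathcal{D}$ is simple; this immediately yields class~(I) and in particular shows that such $\mathfrak{m}\mathcal{D}$ are even maximal. The strategy is to use the presentation of $\mathcal{D}$ together with the list of normal elements recorded in Theorem~\ref{before} to check that every normal non-unit of the quotient vanishes, the obstructions being concentrated precisely at $\mathfrak{m}_0$ and $\mathfrak{m}_+$. For $\mathfrak{m}=\mathfrak{m}_+$, the identity $\mathfrak{m}_+=Z(\mathcal{D})\cap\mathcal{D}^+$ ensures $\mathfrak{m}_+\mathcal{D}\subseteq K:=\ker\pi$, and a Gelfand-Kirillov dimension count, combined with the known structure of $K$ from Theorem~\ref{before}(i),(ii), should identify $K$ as the unique prime strictly above $\mathfrak{m}_+\mathcal{D}$. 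The primitives containing $\mathfrak{m}_+\mathcal{D}$ then split as $\{\mathfrak{m}_+\mathcal{D}\}\cup\pi^{-1}(\mathrm{Prim}\,U(\mathfrak{sl}(2,k)))$, which is class~(II).

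The hard case is the cone point $\mathfrak{m}_0$. Since $z,\omega,\theta$ all vanish modulo $\mathfrak{m}_0\mathcal{D}$, the relation $z\theta=\omega^2$ carries no content there, and $\mathcal{D}/\mathfrak{m}_0\mathcal{D}$ is visibly non-reduced. The plan is to set $P_0:=\sqrt{\mathfrak{m}_0\mathcal{D}}$ and verify three things in turn: that $\mathcal{D}/P_0$ is a domain (giving uniqueness of the prime above $\mathfrak{m}_0\mathcal{D}$), which can be read off from the graded structure of $\mathcal{D}$; that a short list of explicit nilpotent normal generators of $P_0/\mathfrak{m}_0\mathcal{D}$ have pairwise products lying in $\mathfrak{m}_0\mathcal{D}$, giving $(P_0)^2=\mathfrak{m}_0\mathcal{D}$, while those generators survive in $P_0/\mathfrak{m}_0\mathcal{D}$, giving $(P_0)^2\subsetneq P_0$; and finally that $P_0$ is primitive, established by producing a faithful simple $\mathcal{D}/P_0$-module, most likely induced from a suitable sub-Hopf algebra. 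The main obstacle throughout is this cone-point analysis: controlling the nilpotent structure of $\mathcal{D}/\mathfrak{m}_0\mathcal{D}$ precisely enough to pin down $P_0$, compute $(P_0)^2$, and verify its primitivity.
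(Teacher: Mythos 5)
Your top-level stratification by $P\cap Z(\mathcal{D})$ is exactly the paper's strategy (equation \eqref{split}), but two of your three case analyses contain genuine gaps. First, in the generic case your mechanism for simplicity of $\mathcal{D}/\mathfrak{m}\mathcal{D}$ --- ``check that every normal non-unit of the quotient vanishes'' --- does not prove simplicity: a noncommutative ring can have nontrivial two-sided ideals containing no nonzero normal elements, so the absence of normal non-units is not an obstruction-free certificate. The paper instead localises at the Ore sets of Definition~\ref{Ore} to land in a localised Weyl algebra over a commutative ring (Theorem~\ref{Weyls}), where $\mathfrak{m}$ generates a maximal ideal, and then closes the gap between $\mathfrak{m}\mathcal{D}$ and its contraction using the Key Lemma~\ref{small} (any ideal containing a power of $x$ or $u$ contains a power of $\mathcal{O}(G)^+\mathcal{D}$, hence has small GK dimension) together with GK-homogeneity coming from the Cohen--Macaulay property. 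Some such torsion-control device is indispensable and is absent from your outline; the same device is what pins down the primes above $\mathfrak{m}_+\mathcal{D}$ in case (II), where a bare ``GK dimension count'' will not by itself show that every prime strictly above $\mathfrak{m}_+\mathcal{D}$ contains $K$. (You also leave the primitivity of $\mathfrak{m}_+\mathcal{D}$ itself unaddressed; the paper gets it from local closedness plus the Jacobson property of Theorem~\ref{Nullstel}. Relatedly, your ``countable generation / uncountable field'' route to $P\cap Z(\mathcal{D})$ being maximal fails for $k=\overline{\mathbb{Q}}$; the paper uses the Nullstellensatz via a refiltration argument.)

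Second, and more seriously, your treatment of the cone point does not prove what the theorem asserts. Showing $\mathcal{D}/P_0$ is a domain only makes $P_0$ the unique \emph{minimal} prime over $\mathfrak{m}_0\mathcal{D}$; statement (III) claims $P_0$ is the unique prime containing $\mathfrak{m}_0\mathcal{D}$ at all, i.e.\ that $P_0$ is maximal. Producing a faithful simple module would give primitivity but still not maximality. Establishing simplicity of $T=\mathcal{D}/P_0$ is the hardest single step in the paper (Subsection~\ref{steps}): one writes $T$ as an Ore extension $S[v;\sigma,\partial_2]$, adjoins a square root $h$ of $g^{-1}$ to make $\sigma$ inner, converts to a differential operator ring $\Stilde[\alpha;\partialtilde]$, and then rules out $(\partial_1,\partialtilde)$-invariant ideals of the base by an explicit change of derivations after localisation. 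None of this is anticipated by your plan, and the identification $P_0=q\mathcal{D}+s\mathcal{D}$ with $q,s$ the $\sigma$-normal elements of Lemma~\ref{normal} --- which makes $(P_0)^2=\mathfrak{m}_0\mathcal{D}$ immediate from $\mathfrak{m}_0=\langle q^2g^{-1},qsg^{-1},s^2g^{-1}\rangle$ --- is the concrete input your ``short list of explicit nilpotent normal generators'' would need to be.
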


\begin{theorem}\label{main2}  In   the above notation,
  the non-primitive prime ideals of $\mathcal{D}$ are:
\begin{itemize}
\item[(A)] $\{0\},\,K$;
\item[(B)] the principal prime ideals $\mathfrak{p}\mathcal{D}$ for every height one prime $\mathfrak{p}$ of $Z(\mathcal{D})$ except $\mathfrak{p}_1 = \langle z,\omega\rangle$ and $\mathfrak{p}_2 = \langle \theta, \omega \rangle$;
\item[(C)] height one primes $P_1, P_2$, with $\mathfrak{p}_i\mathcal{D} = P_i^2 \subsetneq P_i$ for $i = 1,2$, with each $P_i$ generated by a normal (but not central) element. Moreover,  $P_1 + P_2 = P_0$.
\end{itemize}
\end{theorem}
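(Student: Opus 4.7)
\medskip\noindent\emph{Proof plan.} The strategy is to classify primes of $\mathcal{D}$ by their contraction $\gamma(P) := P \cap Z(\mathcal{D})$ to the centre and to combine this with Theorem~\ref{main1}. Recall that $Z(\mathcal{D}) \cong k[z,\omega,\theta]/(z\theta-\omega^2)$ is a two-dimensional normal affine domain (a quadric cone) with divisor class group $\mathbb{Z}/2\mathbb{Z}$; in particular its height-one primes split into the principal ones and precisely two non-principal primes, namely $\mathfrak{p}_1 = \langle z,\omega\rangle$ and $\mathfrak{p}_2 = \langle \theta,\omega\rangle$. Non-primitivity will be deduced from Schur's lemma: on any simple $\mathcal{D}$-module the centre acts by a character, so if $P$ is primitive then $Z(\mathcal{D})/\gamma(P)$ embeds as a field into the endomorphism algebra of the simple module, forcing $\gamma(P)$ to be maximal. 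Together with Theorem~\ref{main1}, this confines the non-primitive primes to those with $\gamma(P)$ equal to $\{0\}$ or a height-one prime of $Z(\mathcal{D})$.

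For (A), both $\{0\}$ and $K$ are prime ideals of $\mathcal{D}$ (the latter because $\mathcal{D}/K \cong U(\mathfrak{sl}(2,k))$ is a domain), and both are non-primitive by the Schur observation above. To exhaust the primes contracting to $\{0\}$, I would invert the nonzero central elements and analyse the localisation $\mathcal{D}[(Z(\mathcal{D})\setminus\{0\})^{-1}]$; using the explicit structure of $\mathcal{D}$ and of $K$ provided by Theorem~\ref{before}, its prime spectrum should reduce to that of a suitable fraction ring of $U(\mathfrak{sl}(2,k))$, whose only height-$\le 1$ primes are the obvious analogues of $\{0\}$ and $K$.

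For (B), each principal height-one prime $\mathfrak{p} = \langle f\rangle$ of $Z(\mathcal{D})$ yields the ideal $f\mathcal{D}$, which is prime via the standard PBW-style filtration on $\mathcal{D}$: the associated graded is an iterated Ore extension in which the leading term of $f$ remains prime. Non-primitivity is automatic, as $Z(\mathcal{D})/\mathfrak{p}$ is a one-dimensional domain, not a field, so the Schur obstruction of paragraph one applies.

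The main obstacle is (C). Here I would construct normal (but non-central) elements $p_1, p_2 \in \mathcal{D}$ with $\mathcal{D} p_i = p_i \mathcal{D}$ and $p_i^2 \in Z(\mathcal{D})$ such that $p_i^2 \mathcal{D} = \mathfrak{p}_i\mathcal{D}$; this is the algebraic counterpart of the fact that the non-trivial class $[\mathfrak{p}_i] \in \mathrm{Cl}(Z(\mathcal{D})) = \mathbb{Z}/2\mathbb{Z}$ becomes trivial after extending scalars to $\mathcal{D}$. Normality of $p_i$ will follow by exhibiting $p_i$ as an eigenvector for an appropriate automorphism of $\mathcal{D}$ coming from its Hopf structure. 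Setting $P_i := p_i \mathcal{D}$, primeness and height one follow as in (B), and $P_i^2 = p_i^2 \mathcal{D} = \mathfrak{p}_i\mathcal{D} \subsetneq P_i$ by construction. Finally, $P_1 + P_2 \supseteq \mathfrak{p}_1\mathcal{D} + \mathfrak{p}_2\mathcal{D} = \mathfrak{m}_0 \mathcal{D} = P_0^2$ and $P_i \subseteq P_0$ (since $P_0$ is the unique prime of $\mathcal{D}$ over $\mathfrak{m}_0\mathcal{D}$ by Theorem~\ref{main1}(III)), placing $P_1 + P_2$ between $P_0^2$ and $P_0$; equality with $P_0$ would be confirmed via the primary decomposition of $\mathfrak{m}_0\mathcal{D}$ or direct computation with the explicit $p_i$.
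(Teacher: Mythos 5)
Your overall strategy --- stratify primes by their contraction to $Z(\mathcal{D})$, use the class group of the quadric cone to isolate $\mathfrak{p}_1,\mathfrak{p}_2$, and realise these via normal elements squaring into the centre (these are exactly $q$ and $s$ of Lemma~\ref{normal}) --- is the paper's strategy. But there are two genuine gaps. First, your confinement step is false as stated: $K=\mathcal{O}(G)^+\mathcal{D}$ contracts to the \emph{maximal} ideal $\mathfrak{m}_+=Z(\mathcal{D})\cap\mathcal{D}^+$, not to $\{0\}$ or a height-one prime, so "primitive $\Rightarrow$ maximal contraction" (which itself needs the Nullstellensatz of Theorem~\ref{Nullstel}, not bare Schur, to force the central character into $k$) does not reduce you to contractions of height $\le 1$; you must separately show that the only non-primitive prime over $\mathfrak{m}_+\mathcal{D}$ is $K$ (Theorem~\ref{plusthm}(v)). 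Relatedly, your picture of the central localisation is wrong: inverting $Z(\mathcal{D})\setminus\{0\}$ does not invert $x$ or $u$, and the generic structure of $\mathcal{D}$ is a localised Weyl algebra over the centre (Theorem~\ref{Weyls}), not a fraction ring of $U(\mathfrak{sl}(2,k))$; the latter occurs only as the quotient $\mathcal{D}/K$ sitting over $\mathfrak{m}_+$. To show every nonzero prime meets $Z(\mathcal{D})$ one needs the ad-nilpotence of $x,u$ and Lemma~\ref{small} to dispose of primes containing powers of $x$ or $u$, then the central generation of ideals in $\mathcal{D}_{(A)}$ (Lemma~\ref{intersect}).

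Second, and more seriously, in cases (B) and (C) you only propose to prove that $f\mathcal{D}$, $q\mathcal{D}$, $s\mathcal{D}$ \emph{are} prime; the classification requires \emph{uniqueness}: any prime $P$ with $P\cap Z(\mathcal{D})=\mathfrak{p}$ must equal $\mathfrak{p}\mathcal{D}$ (resp.\ $q\mathcal{D}$, $s\mathcal{D}$), i.e.\ there is no strictly larger prime with the same central contraction. Your filtration argument does not address this, and the "leading term of $f$ stays prime in $\gr\mathcal{D}$" claim is itself unjustified. The paper's proof of this step is its technical core: after localising at the Ore sets of Definition~\ref{Ore} one knows $P$ and $\mathfrak{p}\mathcal{D}$ agree after inverting $x$ (or $u$) and $q$ (or $s$), so $P/\mathfrak{p}\mathcal{D}$ is torsion; then the Auslander--Gorenstein/CM machinery of Proposition~\ref{homprop} and \cite[Theorem~7.2(b)]{GL} shows $\mathcal{D}/f\mathcal{D}$ is GK-homogeneous of GK-dimension $5$, while Lemma~\ref{small} bounds the GK-dimension of any such torsion submodule by $3$ or $4$, forcing $P/\mathfrak{p}\mathcal{D}=0$. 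Without some substitute for this homogeneity argument your proof does not close.
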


\begin{theorem}\label{main3} Retain the above notation.
\begin{enumerate}
\item Every non-primitive prime is completely prime. Every primitive ideal is completely prime, except the co-Artinian maximal ideals (other than the counit), which form a subset of $\mathcal{P}$.
\item Every prime ideal $P$, apart from the co-Artinian maximal ideals and (possibly) $P_0$, has $\mathcal{D}/P$ birationally equivalent to a Weyl algebra $A_n(K)$, where  $1 \leq n \leq 2$ and $K$ is a field of transcendence degree at most 2 over $k$.
\item$\mathcal{D}$ satisfies the Dixmier-Moeglin Equivalence.
\end{enumerate}
\end{theorem}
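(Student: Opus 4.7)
The strategy is to verify each assertion of Theorem \ref{main3} by reading it off from the explicit classification of prime ideals given in Theorems \ref{main1} and \ref{main2}, working through the ideals class-by-class.

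For part (1), I would first observe that $\{0\}$ is completely prime since $\mathcal{D}$ is a domain, and that $K$ is completely prime since $\mathcal{D}/K\cong U(\mathfrak{sl}(2,k))$ is a domain. The central lifts $\mathfrak{p}\mathcal{D}$ of class (B) and the primitive central lifts $\mathfrak{m}\mathcal{D}$ of class (I) together with $\mathfrak{m}_+\mathcal{D}$ are handled uniformly via a PBW-type decomposition of $\mathcal{D}$ over its centre, from which extension of a completely prime central ideal remains completely prime. The exceptional primes $P_0,P_1,P_2$ are treated by the normal-element generators furnished by Theorem \ref{main2}(C), which allow $\mathcal{D}/P_i$ to be exhibited directly as a noetherian domain. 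Finally, the primitives in $\mathcal{P}$ are pulled back from primitive ideals of $U(\mathfrak{sl}(2,k))$ under $\pi$, and Dixmier's classification singles out the finite-dimensional (co-Artinian) primitives of $U(\mathfrak{sl}(2,k))$, apart from the counit, as the sole non-completely-prime examples.

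For part (2), each relevant factor $\mathcal{D}/P$ is a noetherian domain by (1), so its Goldie quotient ring is a skew field which I would identify with a Weyl field $\fract(A_n(K))$. Gelfand--Kirillov dimension considerations, together with the additivity of $\GKdim$ along central regular elements, force the parameters into the claimed range $1\le n\le 2$, $\mathrm{tr.\,deg}_k K\le 2$. Concretely, $\GKdim\mathcal{D}=6$ yields $n=2$ and $\mathrm{tr.\,deg}_k K=2$ for $P=\{0\}$; Dixmier's isomorphism $\fract(U(\mathfrak{sl}(2,k)))\cong \fract(A_1(k(t)))$ handles $P=K$; for the primes in (B) and (C), inverting the nonzero elements of $Z(\mathcal{D})/\mathfrak{p}$ in $\mathcal{D}/\mathfrak{p}\mathcal{D}$ yields an algebra over the residue field $\kappa(\mathfrak{p})$ of transcendence degree $1$, birational to $\fract(A_2(\kappa(\mathfrak{p})))$; and the finite-over-centre factors $\mathcal{D}/\mathfrak{m}\mathcal{D}$ deliver Weyl algebras $A_2(k)$ after a change of variables trivialising the residual cocycle.

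For part (3), the DME for $\mathcal{D}$ asserts that on $\mathrm{Spec}\,\mathcal{D}$ the properties primitive, rational and locally closed coincide. The standard implications primitive $\Rightarrow$ rational (via the Nullstellensatz for affine noetherian $k$-algebras of finite GK-dimension) and locally closed $\Rightarrow$ primitive (via Jacobson-ring arguments applied to $\mathcal{D}/P$) reduce the DME to showing that every non-primitive prime $P$ in Theorem \ref{main2} is neither rational nor locally closed. Non-rationality is exhibited by producing a central element of $\fract(\mathcal{D}/P)$ transcendental over $k$: the image of one of $z,\omega,\theta$ serves for the primes in $\{0\}$, (B) and (C), while the image of the Casimir element serves for $K$. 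Non-local-closedness is read off from the classification by exhibiting enough primes strictly containing $P$ with intersection equal to $P$. The main obstacle I anticipate is the careful bookkeeping at the singular prime $P_0$: because $P_0^2=\mathfrak{m}_0\mathcal{D}$ is not radical, verifying directly that $P_0$ is rational and locally closed requires both the decomposition $P_0=P_1+P_2$ from Theorem \ref{main2}(C) and the normal-element structure of $P_1,P_2$.
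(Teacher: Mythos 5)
Your overall strategy --- reading parts (1) and (2) off the explicit classification, and reducing (3) to the non-primitive primes --- matches the paper's, which indeed disposes of (1) and (2) by citing the structural results already proved for each class of primes (plus Dixmier's \emph{Remarque} 7.1 for the primitives lying over $\mathcal{O}(G)^+\mathcal{D}$). However, two of your proposed mechanisms have genuine gaps. First, there is no ``PBW-type decomposition of $\mathcal{D}$ over its centre'' from which extension of completely prime central ideals stays completely prime: this very algebra refutes any such principle, since $\mathfrak{m}_0$ and $\mathfrak{p}_1,\mathfrak{p}_2$ are prime ideals of $Z(\mathcal{D})$ whose extensions $\mathfrak{m}_0\mathcal{D}=P_0^2$ and $\mathfrak{p}_i\mathcal{D}=P_i^2$ are not even semiprime. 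The paper's actual route is through the Ore localisations $\mathcal{D}_{(\Omega)}\cong A_2^{(\Omega)}(k)\otimes S_{(\Omega)}$ of Theorem~\ref{Weyls}: there central extension behaves well, complete primeness is inherited from the contraction $\mathfrak{m}\mathcal{D}_{(\Omega)}\cap\mathcal{D}$, and the delicate point --- that this contraction equals $\mathfrak{m}\mathcal{D}$ (resp.\ $\mathfrak{p}\mathcal{D}$) --- is settled by Lemma~\ref{small} together with the CM/GK-homogeneity arguments. The anomalous classes arise exactly where $q$ or $s$ dies in the quotient, so the localisation is unavailable; your ``uniform'' argument cannot see this.

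Second, your reduction in part (3) is logically incomplete. Granting primitive $\Rightarrow$ rational and locally closed $\Rightarrow$ primitive, showing that every non-primitive prime is neither rational nor locally closed yields rational $\Rightarrow$ primitive, but says nothing about primitive $\Rightarrow$ locally closed; proving that non-primitive primes are not locally closed is just the contrapositive of an implication you already have. The paper therefore first checks that every primitive ideal \emph{is} locally closed: those in classes (I) and (III) are maximal, $\mathfrak{m}_+\mathcal{D}$ is locally closed by Theorem~\ref{plusthm}(v), and the primes in $\mathcal{P}$ are handled via Moeglin's theorem for $U(\mathfrak{sl}(2,k))$; only then does \cite[Lemma~II.7.15]{BG} reduce matters to rational $\Rightarrow$ primitive, verified case by case on the list in Theorem~\ref{prime} (each non-primitive prime has a transcendental central element in its Goldie quotient, as you say). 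Finally, the difficulty you anticipate at $P_0$ is illusory: $P_0$ is a maximal ideal by Theorem~\ref{simplicity}, so it is locally closed and primitive for free, and neither the identity $P_0=P_1+P_2$ nor the normal generators of $P_1,P_2$ are needed at that point.
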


Theorem~\ref{main1} is proved in Section~\ref{privspec},  see in particular Subsection~\ref{privshape}, while Theorem~\ref{main2} is proved in  Theorem~\ref{prime}.    Finally, Theorem~\ref{main3} is proved in Subsection~\ref{DM}.
Some questions and a conjecture (Conjecture \ref{wild}) are scattered through the paper.

\medskip

\noindent{\bf 1.2. Hopf algebras in duality.} The full Drinfeld double $\mathcal{D}(H) = H \bowtie H^{\circ}$  of an infinite dimensional 
Hopf algebra $H$ may often be unwieldy due to $H$ having ``too many'' finite dimensional representations and thus leading to an 
unmanageably large finite dual $H^{\circ}$. This has generated significant recent interest in constructing doubles 
$\mathcal{D}(H) := H \bowtie H'$ where $H'$ is some suitable Hopf subalgebra of $H^{\circ}$; see for example \cites{LL, BCJ}
 and the papers listed in $\S$1.1. Much is at present unclear: for example, what is an appropriate definition of a  ``suitable'' Hopf subalgebra $H'$; and
  does a suitable algebra $H'$ always exist?  The double $\mathcal{D}$ of the Jordan plane is a test case for these and other
   questions. In particular, some of the desirable properties exhibited by $\mathcal{D}$ may form a paradigm for what one might 
   aim for in defining doubles in more general settings.

\medskip

\noindent{\bf 1.3. Representation theory.} A striking feature of the representation theory of $\mathcal{D}$ is the
 fact, proved in \cite[Theorem~3.11]{ADPP} and recalled here in Theorem~\ref{before}(iii), that $U(\mathfrak{sl}(2,k))$ is a quotient
  Hopf algebra of $\mathcal{D}$ and the finite dimensional irreducible $\mathcal{D}$-modules are precisely the finite dimensional
   irreducible $U(\mathfrak{sl}(2,k))$-modules. This immediately suggests a plethora of questions, some of which are addressed 
   in \cite{AP2}, where Verma $\mathcal{D}$-modules are introduced. But many others remain untouched: for instance, what 
   can be said about the category of locally finite dimensional $\mathcal{D}$-modules, and  for each primitive ideal $P$ of 
   $\mathcal{D}$ can one find  a canonical irreducible module (hopefully a factor of a Verma module) whose annihilator equals $P$?
   The first step in these questions is to classify the primitive ideals of $\mathcal{D}$, as we do here. 
 \medskip
 
\noindent{\bf 1.4. Dixmier-Moeglin equivalence.} The validity of the Dixmier-Moeglin Equivalence  for an algebra $R$
  yields simultaneous representation-theoretic, algebraic and topological characterisations of the primitive ideals amongst the 
  prime ideals of $R$.  This equivalence  is a feature of some but not all  
 Hopf $k$-algebras; see, for example,  \cites{Be, BL} for discussions of when it holds for a noetherian (Hopf) algebra.  As  we prove in 
 Theorem~\ref{main2},  $\mathcal{D}$ satisfies the equivalence. See $\S$\ref{DM} for 
the   details, where we also give a  rather ambitious Conjecture ~\ref{wild}, proposing a general result 
  encompassing all affine Noetherian pointed Hopf $\mathbb{C}$-algebras. 

\medskip

\noindent {\bf Notation.} Throughout,   all vector spaces and all
 unadorned tensor products are understood to be over the base field $k$. We denote the comultiplication of a Hopf algebra
 $H$ by $\Delta$ and  its augmentation ideal by $H^+$. The Gelfand-Kirillov, or GK dimension  of an object $X$  is denoted by 
 $\GKdim(X)$, while  the global (homological) dimension, respectively injective dimension of a ring $R$ is denoted by 
 $\mathrm{gldim}(R)$, respectively  $\mathrm{injdim}(R)$. For precision, we specify that in the \emph{Ore extension} 
 $T=S[ v;\sigma,\partial]$, multiplication
   is defined    by 
\begin{equation}\label{Ore-eq}
vs= s^\sigma v+\partial(s) \qquad\text{for} \ s\in S.
\end{equation}
It follows that $\partial$ is a $\sigma$-derivation in the sense that $\partial(ab)=a^\sigma \partial(b)+\partial(a)b.$ This follows
 the conventions of, for example,  \cite[p.34]{GW}.

\bigskip

\section{Preliminaries}\label{define}
 
\subsection{Definitions and notation}\label{notation}  The following definitions and notation from \cite{ADPP}) 
will remain in play throughout the paper. First, the \emph{Jordan plane} is  
$$  J\;  := \; k\langle x,y : [y,x] = -\hhalf x^2 \rangle, $$
with \emph{bosonization}
$$ \mathcal{H} \; := \; J\#C_{\infty} = J\#\langle g^{\pm 1} \rangle, \textit{ where } gxg^{-1} = x, \; gyg^{-1} = y + x. $$
Then the \emph{Drinfeld double} of $J$ is defined to be $ \mathcal{D} :=   \mathcal{H}\langle u,v, \zeta \rangle,$ with additional relations as follows:
\begin{align*} [u,v]&= \hhalf u^2; \; \;[\zeta,v] = -v; \; \;[\zeta, u] = -u; \; [u,y] = 1-g;\\
[v,x] &= 1-g + xu; \;\; [v,y] = yu - g\zeta; \; [v,g] = gu; \;\; [\zeta,y] = y; \;\; [\zeta, x] = x; \; \\
[x,u] &= [x,g] = [u,g] =[\zeta,g] = 0.
\end{align*}
The coalgebra structure, which will mostly not concern us here, is determined for $\mathcal{H}$ by specifying that $g$ is grouplike 
and $x$ and $y$ are $(g,1)-$primitive; and then extended to $\mathcal{D}$ by setting $u$ and $\zeta$ to be primitive and  
$\Delta (v) = v \otimes 1 + 1 \otimes v + \zeta \otimes u.$

\medskip

Observe that $K := k\langle u,v,\zeta \rangle$ is a Hopf subalgebra of $\mathcal{D}$ and in fact, as one can see from the PBW theorem
 for $\mathcal{D}$ as described in \cite[Proposition~2.3(ii)]{AP},   $\mathcal{D} = J\otimes_k K$ as vector spaces. As noted in \cite[Lemma~2.2]{AP} there
  is a non-degenerate skew pairing between $J$ and $K$ which yields the multiplication relations between these subalgebras as 
  in \cite{DT}.

\medskip 
\subsection{Initial results}\label{initial}
We gather together in Theorem~\ref{before} some of the main results of \cite{AP} and \cite{ADPP}. We must first define  some 
elements of $\mathcal{D}$, as follows. Set
\begin{equation}\label{s-q-defn}
 q \; := \;  ux + 2(1 + g), \; \textit{ and }\; s \; := \; xv + uy + (-\frac{1}{2}ux + g - 1)\zeta - 2(g + 1). 
 \end{equation}
The following lemma is partly explicit, partly implicit, in \cite[$\S$4]{ADPP}. Given a $k$-algebra automorphism $\sigma$ of a 
$k$-algebra $H$, we say that the element $h$ of $H$ is $\sigma$-\emph{normal} if $ha = \sigma(a)h$ for all $a \in H$. 

\begin{lemma}\label{normal} Keep the above notation.
\begin{enumerate}
\item[(i)] $q$ and $s$ are both $\sigma$-normal, where $\sigma$ is the automorphism of $\mathcal{D}$ defined by 
$$ \sigma (y) = y +\hhalf x, \; \; \sigma (v) = v -\hhalf u,$$
with $\sigma$ acting as the identity on the other generators of $\mathcal{D}$.
\item[(ii)] $\sigma^2$ equals conjugation by $g$ on $\mathcal{D}$; that is, $ \sigma^2 (h) \; = \; ghg^{-1}$ for all $h \in \mathcal{D}$.
\item[(iii)] The elements $ z \; := \; q^2 g^{-1}, \;  \theta \; := \; s^2g^{-1}$  and  $\omega \; := \; qsg^{-1}$
are in the centre $Z(\mathcal{D})$ of $\mathcal{D}$. 
\end{enumerate}
\end{lemma}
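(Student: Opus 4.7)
The plan is to address (i), (ii), (iii) in order, with (iii) reducing cleanly to a formal consequence of (i) and (ii).

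For (i), I would first verify that $\sigma$ really does extend to a $k$-algebra automorphism of $\mathcal{D}$. Since $\sigma$ fixes all generators of $\mathcal{D}$ except $y$ and $v$, one only needs to check that each defining relation involving $y$ or $v$ is preserved under the substitutions $y \mapsto y + \hhalf x$ and $v \mapsto v - \hhalf u$; this is a direct calculation using the relations listed in $\S$\ref{notation}. Once $\sigma$ is known to be an automorphism, $\sigma$-normality of $q$ and $s$ is checked generator by generator. For the $\sigma$-fixed generators $x, u, g, \zeta$ this reduces to showing that each commutes with $q$ and with $s$; for $y$ and $v$ one must verify the genuine twisted identities $qy = (y + \hhalf x)\,q$, $qv = (v - \hhalf u)\,q$, and the two analogous identities for $s$.

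For (ii), it suffices to check agreement on generators, since both $\sigma^2$ and conjugation by $g$ are algebra endomorphisms of $\mathcal{D}$. On $y$ one has $\sigma^2(y) = y + x$, which matches the bosonization relation $gyg^{-1} = y+x$. On $v$ one has $\sigma^2(v) = v - u$, and rearranging $[v,g] = gu$ (using $[u,g]=0$) gives $gvg^{-1} = v - u$. On the remaining generators $x, u, g, \zeta$, both $\sigma^2$ and conjugation by $g$ act trivially, by the definition of $\sigma$ and by $[x,g] = [u,g] = [\zeta,g] = 0$.

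For (iii), one argues abstractly using (i) and (ii). If $p$ is $\sigma$-normal then $pa = \sigma(a)p$, which may be rewritten as $ap = p\,\sigma^{-1}(a)$. Iterating, for any two $\sigma$-normal elements $p_1, p_2$ and any $a \in \mathcal{D}$,
\begin{equation*}
a\, p_1 p_2 g^{-1} \;=\; p_1\, \sigma^{-1}(a)\, p_2 g^{-1} \;=\; p_1 p_2\, \sigma^{-2}(a)\, g^{-1} \;=\; p_1 p_2 g^{-1} a,
\end{equation*}
where the last equality uses (ii) in the form $\sigma^{-2}(a)g^{-1} = g^{-1}ag\cdot g^{-1} = g^{-1}a$. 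Applying this with $(p_1,p_2) = (q,q), (s,s)$ and $(q,s)$ shows that $z$, $\theta$ and $\omega$ are each central.

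The main obstacle will be the bookkeeping in (i), specifically the two longer verifications $sy = (y + \hhalf x)s$ and $sv = (v - \hhalf u)s$, where $s$ has four summands and the cancellation proceeds via the relations $[y,x] = -\hhalf x^2$, $[u,y] = 1-g$, $[v,x] = 1-g+xu$, $[v,y] = yu - g\zeta$, $[\zeta, y] = y$ and $[\zeta, x] = x$. Everything else is short and mechanical.
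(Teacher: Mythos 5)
Your proposal is correct and follows the same route as the paper, which simply records that (i) and (ii) are direct checks on generators and that (iii) is immediate from (i) and (ii); your displayed computation showing $a\,p_1p_2g^{-1}=p_1p_2\,\sigma^{-2}(a)g^{-1}=p_1p_2g^{-1}a$ is precisely the formal deduction the paper leaves implicit. The only detail worth flagging is that you should indeed confirm $\sigma$ respects all the defining relations (not just those involving $y$ and $v$ directly, but e.g.\ $[v,y]=yu-g\zeta$, whose two sides transform nontrivially), as you indicate.
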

\begin{proof} (i) and (ii) are easy checks, and (iii) is immediate from (i) and (ii).
\end{proof}

\begin{theorem}\label{before} Retain the notation introduced above.
\begin{enumerate}
\item[(i)]\cite[Proposition~2.7(i)]{AP} $\mathcal{O}(G) := k\langle x,u,g^{\pm 1} \rangle$ is a normal commutative Hopf subalgebra of $\mathcal{D}$, with $G = ((k,+)\times (k,+))\rtimes (k^{\ast}, \times)$.

\item[(ii)]\cite[Proposition~2.7(ii)]{AP} $\mathcal{D}\mathcal{O}(G)^+$ is a Hopf ideal of $\mathcal{D}$, with an isomorphism of Hopf algebras 
\begin{equation}\label{epi} \mathcal{D}/\mathcal{D O}(G)^+ \; \cong \; U(\mathfrak{sl}_2(k)).
\end{equation}
\item[(iii)]\cite[Theorem~3.11]{ADPP} The finite dimensional irreducible $\mathcal{D}$-modules are the finite dimensional irreducible $U(\mathfrak{sl}_2(k))$-modules given by the epimorphism (\ref{epi}).
 \item[(iv)]\cite[Theorem~4.10]{ADPP} With the notation from Lemma~\ref{normal}, the centre of $\mathcal{D}$ is
\begin{equation}\label{centre} Z(\mathcal{D}) \; = \; k \langle z, \omega, \theta \, : \,  z\theta = \omega^2 \rangle, \end{equation}
 \item[(v)]\cite[Remark~2.2]{ADPP} $\mathcal{D}$ is pointed. 
\qed
\end{enumerate}
\end{theorem}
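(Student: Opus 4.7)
The plan is to verify the five assertions of Theorem~\ref{before}, each of which is compiled from \cite{AP} or \cite{ADPP}, in turn.

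For (i), commutativity of $\mathcal{O}(G)$ is immediate from $[x,u]=[x,g]=[u,g]=0$, and closure under $\Delta$ holds because $g$ is grouplike while $x$ is $(g,1)$-primitive and $u$ is primitive. Normality reduces to showing that the commutator of each generator of $\mathcal{D}$ with $x,u,g$ lies in $\mathcal{O}(G)$; inspection of the listed relations (e.g.\ $[v,x]=1-g+xu\in\mathcal{O}(G)$ and $[v,g]=gu\in\mathcal{O}(G)$, together with the trivial brackets involving $\zeta$) settles this. The group structure $G=(k^2)\rtimes k^*$ is then read off the multiplication induced on characters of $\mathcal{O}(G)$ by $\Delta$.

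For (ii), normality of $\mathcal{O}(G)$ combined with standard Hopf theory makes $\mathcal{D}\mathcal{O}(G)^+$ a Hopf ideal; substituting $x,u\equiv 0$ and $g\equiv 1$ in the defining relations leaves, after a rescaling of the images of $y,v,\zeta$, precisely the $\mathfrak{sl}_2(k)$ relations, and a universal-property comparison yields the isomorphism. Part (iii) then follows once one shows that every finite-dimensional irreducible $\mathcal{D}$-module is killed by $\mathcal{O}(G)^+$: the primitive $u$ must act nilpotently, the grouplike $g$ acts by a scalar which Schur's lemma combined with the central relations of Lemma~\ref{normal} pins down to $1$, and then $[u,y]=1-g$ together with $[u,x]=0$ force $x$ to act as zero.

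The main obstacle is (iv). I would equip $\mathcal{D}$ with a well-chosen filtration (for instance a deg-lex filtration on the PBW basis of \cite[Proposition~2.3]{AP}) whose associated graded ring $\gr \mathcal{D}$ is a near-commutative deformation amenable to direct analysis. I would then compute $Z(\gr \mathcal{D})$, lift its generators back to $\mathcal{D}$ via the standard filtered-to-graded argument, and identify the lifts with $z,\theta,\omega$. The relation $z\theta=\omega^2$ is immediate from the definitions $z=q^2g^{-1}$, $\theta=s^2g^{-1}$, $\omega=qsg^{-1}$; the harder direction is completeness, for which I would compare $\GKdim Z(\mathcal{D})$ with that of $k[z,\omega,\theta]/(z\theta-\omega^2)$ and use the $\sigma$-normality of Lemma~\ref{normal} to rule out further central generators. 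Finally, for (v), I would observe that $\mathcal{D}$ is generated as an algebra by the grouplike $g^{\pm 1}$, the $(g,1)$-primitives $x,y$, the primitives $u,\zeta$, and the element $v$ whose coproduct lies in the second term of the coradical filtration. Passing to that filtration, or alternatively building $\mathcal{D}$ as an iterated Ore/skew-group extension of the manifestly pointed bosonization $\mathcal{H}$, shows that the coradical of $\mathcal{D}$ is $k\langle g^{\pm 1}\rangle$, whence $\mathcal{D}$ is pointed.
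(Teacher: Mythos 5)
The paper offers no proof of this theorem: each item is imported verbatim from \cite{AP} or \cite{ADPP}, and the \textnormal{qed} merely records that the proofs live in those references. So any self-contained argument is necessarily a different route; the issue is whether yours actually closes the gap, and for parts (iii) and (iv) it does not. Parts (i), (ii) and (v) are fine in outline (for (ii) you should add that the surjection from $U(\mathfrak{sl}_2(k))$ is injective because it matches up the PBW bases of \cite[Proposition~2.3]{AP} on either side, not just by "universal property").

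For (iii), the step ``the primitive $u$ must act nilpotently'' is unjustified: primitivity is a coalgebra-theoretic property and places no constraint on the action of $u$ on a module --- the standard generator $h$ of $\mathfrak{sl}_2$ is primitive in $U(\mathfrak{sl}_2(k))$ and acts semisimply on every finite-dimensional irreducible. What actually forces nilpotence of $u$ and of $x$ on a finite-dimensional module is the weight-shift relations $[\zeta,u]=-u$ and $[\zeta,x]=x$. Moreover $g$ is \emph{not} central ($[v,g]=gu$), so Schur's lemma does not give that $g$ acts by a scalar without further work; and even granting $g\equiv 1$ and $x$ nilpotent, you still must show $x$ acts by \emph{zero} on an irreducible module (its kernel is not obviously a submodule, given $[v,x]=1-g+xu$). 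This is precisely where the content of \cite[Theorem~3.11]{ADPP} lies. For (iv), the relation $z\theta=\omega^2$ is indeed immediate, but the completeness argument is only a strategy: central elements of $\gr_{\mathcal F}\mathcal{D}$ need not lift to central elements of $\mathcal{D}$ (only $\gr Z(\mathcal{D})\subseteq Z(\gr\mathcal{D})$ is automatic, and the inclusion can be strict), and comparing GK dimensions cannot identify $Z(\mathcal{D})$ with $k[z,\omega,\theta]/(z\theta-\omega^2)$, since a strictly larger centre of the same GK dimension (for instance a finite integral extension inside $Q(\mathcal{D})$) is not excluded by such a comparison. As written, (iii) and (iv) remain appeals to \cite{ADPP} dressed up as sketches.
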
 
 
\medskip

We'll need the following labelling of the maximal ideals of $Z(\mathcal{D})$. Note that here  there are two maximal ideals of $Z(\mathcal{D})$ which require particular attention.

\begin{notation}\label{maxspecZ} (i)  By Theorem~\ref{before}((iv), $\mathrm{Maxspec}(Z(\mathcal{D}))$ consists of 
$$ \{\mathfrak{m}_{(\alpha,\gamma)} := \langle z - \alpha^2 \gamma^{-1}, \omega - \alpha, \theta - \gamma \rangle: \alpha \in k,\, \gamma \in k^{\ast}\} \;\dot\cup \;\{\mathfrak{m}_{\beta} := \langle z-\beta, \omega, \theta \rangle : \beta \in k \}.
$$
Note that  $\mathfrak{m}_{(\alpha,\gamma)}$ can be simplified to  $\mathfrak{m}_{(\alpha,\gamma)} = \langle  \omega - \alpha, \theta - \gamma \rangle$, while  $\mathfrak{m}_{\beta} = \langle z- \beta, \omega \rangle$ when $\beta \neq 0$. 

\noindent (ii) It is easy to calculate using the definition of the counit that
$$ \mathcal{D}^+ \cap Z(\mathcal{D})\; = \;\mathcal{O}(G)^+\mathcal{D} \cap Z(\mathcal{D})\; = \; \mathfrak{m}_{(-16,16)}.$$
We thus denote $\mathfrak{m}_{(-16,16)}$ by $\mathfrak{m}_+$.

\noindent (iii) It is clear that the singular locus of $Z(\mathcal{D})$ is $\{\mathfrak{m}_0 \}$.
\end{notation}

\bigskip

\section{Ring-theoretic preparations}\label{prep}

In this section we assemble some properties needed in the analysis of the primitive spectrum of $\mathcal{D}$. The proofs are most easily approached by viewing $\mathcal{D}$ as an iterated Hopf Ore extension starting not from the base field $k$ but from the commutative normal Hopf subalgebra $\mathcal{O}(G) = k\langle x,u, g^{\pm 1} \rangle$ of Theorem~\ref{before}(i). More precisely:

\begin{proposition}\label{prop-ore} $\mathcal{D}$ is an iterated Ore extension
\begin{equation}\label{IHOE} \mathcal{D} \; = \; \mathcal{O}(G)[y;\delta_1][\zeta; \delta_2][v; \tau, \delta_3],
\end{equation}
where the derivations $\delta_1$ and $\delta_2$, the automorphism $\tau$ and the $\tau$-derivation $\delta_3$ can be read off from the defining relations of $\mathcal{D}$ given in Subsection~\ref{notation}. 

 In particular, $\mathcal{D}$ is a noetherian domain.
\end{proposition}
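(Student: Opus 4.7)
The plan is to construct the iterated Ore extension $R := \mathcal{O}(G)[y;\delta_1][\zeta;\delta_2][v;\tau,\delta_3]$ abstractly by reading the data $\delta_1,\delta_2,\tau,\delta_3$ off the defining relations of $\mathcal{D}$ listed in Subsection~\ref{notation}, verifying well-definedness at each of the three stages, and then identifying $R$ with $\mathcal{D}$ using the PBW basis of \cite[Proposition~2.3(ii)]{AP}.

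The first two stages are essentially bookkeeping. Since $\mathcal{O}(G) = k[x,u,g^{\pm 1}]$ is a commutative Laurent polynomial ring, the relations $[y,x] = -\hhalf x^2$, $[u,y] = 1-g$, and $gy = (y+x)g$ determine a unique derivation $\delta_1$ of $\mathcal{O}(G)$; similarly the brackets $[\zeta,s]$ for $s \in \{x,y,u,g\}$ single out a candidate derivation $\delta_2$ of $R_1 := \mathcal{O}(G)[y;\delta_1]$, whose well-definedness reduces to the short check that $\delta_2$ annihilates the defining relations $ys - sy - \delta_1(s)$ of $R_1$ as $s$ ranges over the generators of $\mathcal{O}(G)$. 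This yields $R_2 := R_1[\zeta;\delta_2]$.

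The third stage is where the main subtlety will lie. Written in the Ore form $vs = s^\tau v + \delta_3(s)$, the commutator $[\zeta,v] = -v$ becomes $v\zeta = \zeta v + v$, which forbids $\tau$ from being the identity on $\zeta$: one is forced to take $\tau(\zeta) = \zeta + 1$ with $\delta_3(\zeta) = 0$. Taking $\tau$ as the identity on $x,u,g,y$, the remaining relations $[u,v] = \hhalf u^2$, $[v,x] = 1 - g + xu$, $[v,y] = yu - g\zeta$ and $[v,g] = gu$ then read off $\delta_3$ on the remaining generators of $R_2$. The hard part will then be to verify that $\tau$ extends to an algebra automorphism of $R_2$ (immediate, since every defining relation of $R_2$ is $\tau$-invariant) and that $\delta_3$ extends to a $\tau$-derivation; the latter reduces to checking that $\delta_3$ annihilates each defining relation of $R_2$, a routine but slightly lengthy computation whose trickiest instance is the verification of $\delta_3(yx - xy + \hhalf x^2) = 0$, which unwinds cleanly once one uses that $x,u,g$ mutually commute in $\mathcal{O}(G)$.

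With $R = R_2[v;\tau,\delta_3]$ well-defined, the canonical $k$-algebra map $\varphi : R \to \mathcal{D}$ sending each generator to its namesake exists (by construction it respects the relations of $\mathcal{D}$) and is clearly surjective. To see that $\varphi$ is injective, I would compare ordered monomial bases: $R$ is a free left $\mathcal{O}(G)$-module with basis $\{y^d \zeta^e v^f : d,e,f \ge 0\}$, and the same set is an $\mathcal{O}(G)$-basis of $\mathcal{D}$ by the PBW theorem \cite[Proposition~2.3(ii)]{AP}. Finally, three applications of the standard fact that an Ore extension of a noetherian domain by an automorphism and a skew derivation is again a noetherian domain (e.g.\ \cite[Theorem~2.9]{GW}), starting from the commutative noetherian domain $\mathcal{O}(G)$, yield the closing assertion.
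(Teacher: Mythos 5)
Your argument is correct, and it correctly identifies the one genuinely non-obvious piece of data, namely that $[\zeta,v]=-v$ forces $\tau(\zeta)=\zeta+1$ with $\delta_3(\zeta)=0$; but it is organised in the opposite direction to the paper's proof, and the difference is worth noting. You build the iterated Ore extension $R$ from the bottom up, which obliges you to verify at each stage that the candidate maps $\delta_2$, $\tau$, $\delta_3$ are well defined on the previously constructed ring (i.e.\ that they annihilate, or preserve, its defining relations) --- you flag this as ``routine but slightly lengthy'' without carrying it out, and it is in fact the bulk of the work in your approach. The paper instead first establishes the PBW basis $\{g^ax^bu^cy^d\zeta^ev^f\}$ of $\mathcal{D}$ (via \cite[Proposition~1.6]{AP}) and then appeals to the recognition theorem \cite[Theorem~1, p.438]{Cohn}: once one knows that $\mathcal{D}=\bigoplus_{f\ge 0} S v^f$ freely over the subalgebra $S$ generated by the earlier variables and that $vS\subseteq Sv+S$ (which is immediate from the listed commutators), the maps $\tau$ and $\delta_3$ are produced automatically and are well defined by the freeness, so no Leibniz-rule verifications are needed; the same applies at the two earlier stages. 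Both routes rest on the same two inputs --- the PBW basis and the triangular form of the relations --- and your final step (comparing the ordered-monomial $\mathcal{O}(G)$-bases of $R$ and of $\mathcal{D}$ to get injectivity of $\varphi$) uses the PBW basis in essentially the way the paper does. In short: your proof is valid provided the deferred well-definedness computations are actually done, whereas the paper's citation of Cohn's theorem is precisely the device that makes those computations unnecessary; what your version buys in exchange is that the data $\delta_1,\delta_2,\tau,\delta_3$ are written out explicitly rather than merely asserted to be readable off the relations.
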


\begin{proof}  Use the proof of \cite[Proposition~1.6]{AP} to show  that $\mathcal{D}$ has basis $$\{g^ax^bu^cy^d\zeta^ev^f : a\in \mathbb{Z}, b,\dots,f\in\mathbb{N}\}.$$ Then the form of  the    relations \eqref{notation} combined with
\cite[Theorem~1, p.438]{Cohn}  show that it is indeed  an Ore extension. 
\end{proof} 

Although it will be not needed in this paper, the description \eqref{IHOE} even describes $\mathcal{D}$ as an Iterated Hopf Ore Extension (IHOE), in the sense that each extension in that formula is itself a Hopf algebra. It also  shows that, by setting 
\begin{align*}\label{degree} &\deg x = \deg u = \deg g = \deg g^{-1} = 0;\\
&\deg y = \deg \zeta = \deg v =1, 
\end{align*}
one obtains  a filtration $\mathcal{F}$ on $\mathcal{D}$ with associated graded algebra 
\begin{equation}\label{grade} \gr_{\mathcal{F}}\mathcal{D} \; = \; \mathcal{O}(G)[\overline{y}, \overline{\zeta},\overline{v}].
\end{equation}
So $\gr_{\mathcal{F}}\mathcal{D}$ is a commutative polynomial algebra in 6 variables with one variable inverted.

\medskip

\subsection{Homological properties}\label{homology}
 In this subsection we note that $\mathcal{D}$ has certain useful homological properties, and we begin with the relevant definitions.    
A ring $A$ is called  \emph{Auslander Gorenstein} if it has finite injective dimension and satisfies the
 {\it Gorenstein condition}: if $p<q$ 
are non-negative integers and  $M$ is a finitely generated $A$-module, then
$\Ext_A^p(N,\,A)=0$ for every submodule $N$ of 
$\Ext_A^q(M,\,A)$. The ring $A$ is \emph{Auslander regular} if it is Auslander Gorenstein of finite global dimension.
Set
 $j_A(M)= \min\{ r : \Ext^r_A(M,A)\not= 0\}$ for the \emph{homological grade} of $M$.  
 Then an Auslander Gorenstein ring $A$ of finite GK dimension is
 called \emph{GK-Cohen-Macaulay} (or just CM), provided 
 that  $j_A(M)+\GKdim(M)=\GKdim(A)$ holds
for each such  $M$.  Obviously affine commutative regular rings are both Auslander regular and CM. 

\begin{proposition}\label{homprop}\begin{enumerate} 
 \item[(i)] $\mathcal{D}$ is Auslander regular and CM.
\item[(ii)] $\mathcal{D}$ is AS regular in the sense of, say,  \cite{Lev}.
\item[(iii)]  $\GKdim(\mathcal{D}) = 6 =  \mathrm{gldim}(\mathcal{D}) .$
\item[(iv)] GK dimension is an exact function on finitely generated $\mathcal{D}$-modules.
\end{enumerate}
\end{proposition}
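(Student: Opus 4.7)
The unifying tool I would use is the filtration $\mathcal{F}$ from equation~\eqref{grade}, whose associated graded ring $\gr_{\mathcal{F}}\mathcal{D} = k[x,u,g^{\pm 1},\overline y, \overline\zeta, \overline v]$ is a \emph{commutative} Laurent polynomial ring in six variables. It is regular of Krull dimension $6$, hence automatically Auslander regular and GK--Cohen--Macaulay with $\GKdim = \mathrm{gldim} = 6$. I would then transfer these properties to $\mathcal{D}$ by standard filtered-to-graded lifting: for a separated exhaustive filtration whose associated graded is Noetherian Auslander regular (respectively CM), the filtered ring is Auslander regular (respectively CM) with global and GK dimensions bounded above by those of the graded ring. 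This immediately yields (i) together with the upper bounds $\mathrm{gldim}(\mathcal{D}), \GKdim(\mathcal{D}) \leq 6$ in (iii).

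For the matching lower bounds in (iii), I would use $\GKdim(\mathcal{D}) \geq \GKdim(\gr_{\mathcal{F}}\mathcal{D}) = 6$ from the usual comparison, and the CM identity $j_{\mathcal{D}}(k) + \GKdim(k) = \GKdim(\mathcal{D}) = 6$ applied to the augmentation module $k = \mathcal{D}/\mathcal{D}^+$ to force $\mathrm{gldim}(\mathcal{D}) \geq 6$. As an independent route, the iterated Ore presentation~\eqref{IHOE} with base $\mathcal{O}(G)$ of global and GK dimension $3$ also delivers the upper bound, since each Ore extension by an automorphism and $\sigma$-derivation over a Noetherian Auslander regular CM ring preserves these properties and increases each dimension by at most one. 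Statement (iv) is then a formal consequence of the CM property: over an Auslander GK--Cohen--Macaulay ring, $\GKdim$ is automatically exact on finitely generated modules, via the identity $j(M)+\GKdim(M) = \GKdim(\mathcal{D})$.

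For (ii) I would combine (i) with the Hopf algebra structure. Once $\mathrm{gldim}(\mathcal{D})$ is known to be finite, AS regularity in the sense recalled in \cite{Lev} reduces to computing $\Ext^i_{\mathcal{D}}(k,\mathcal{D})$ for the trivial module. Here the Hopf input is the general principle that an affine Noetherian Hopf algebra of finite global dimension is AS Gorenstein, which together with $\mathrm{gldim}(\mathcal{D}) = 6$ upgrades to AS regularity.

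The main technical point I expect to require care is choosing and verifying a lifting theorem suited to $\mathcal{F}$: its degree-zero piece $\mathcal{O}(G)$ already contains the invertible element $g$, so $\mathcal{F}$ is non-negatively, but not strictly positively, graded. This should be harmless because $\mathcal{O}(G)$ is itself commutative regular, but the Zariskianness and noetherianity hypotheses of the standard lifting theorems still need to be checked. The cleanest way to sidestep the issue would be to argue throughout via the iterated Ore extension~\eqref{IHOE}, for which the inheritance of Auslander regularity, CM, and the dimension identities is classical.
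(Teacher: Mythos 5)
Your treatment of (i), (iii) and (iv) is essentially the paper's proof: the paper verifies that the filtration $\mathcal{F}$ is Zariskian (citing Bj\"ork) and lifts Auslander regularity and the CM property from the commutative regular ring $\gr_{\mathcal{F}}\mathcal{D}$; it gets $\GKdim(\mathcal{D})=6$ from the graded comparison, the upper bound $\mathrm{gldim}(\mathcal{D})\leq 6$ from the iterated Ore presentation, the lower bound from the CM identity applied to a finite-dimensional module (you use the trivial module $k$, the paper uses one supplied by \cite[Theorem~3.11]{ADPP} --- same argument), and (iv) from the exactness of the grade $j$ for Auslander--Gorenstein rings combined with $j(M)+\GKdim(M)=6$. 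Your worry about the Zariskian hypothesis is exactly the point the paper addresses by citation.

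Part (ii) as you state it has a genuine gap: the ``general principle that an affine Noetherian Hopf algebra of finite global dimension is AS Gorenstein'' is not a theorem --- it is an instance of the open Brown--Goodearl question on AS--Gorensteinness of noetherian Hopf algebras, so you cannot invoke it. What is actually available (and is what the paper uses) is \cite[Lemma~6.1]{BZ}: a noetherian Hopf algebra that is Auslander--Gorenstein and GK--Cohen--Macaulay is AS--Gorenstein. Since you have already established exactly these hypotheses in (i), together with finite global dimension from (iii), the repair is immediate; but the deduction must go through the Auslander--Gorenstein plus CM route rather than through finite global dimension alone.
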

 
\begin{proof} 
(i) By \cite[Remark, p.157]{Bj} the filtration $\mathcal{F}$  is Zariskian and so the result follows  from \cite[Theorems~3.8, 3.9 and Remark, p. 165]{Bj}.

\medskip
\noindent(ii) This is immediate from (i) and \cite[Lemma~6.1]{BZ}.

 \medskip 
 \noindent   (iii)   By \cite[Corollary~1.4]{McCSt},   $\GKdim(\mathcal{D}) = \GKdim(\gr_{\mathcal{F}}(\mathcal{D})) = 6 .$ 
 
 \smallskip
       By Proposition~\ref{prop-ore} and   \cite[Theorem~7.5.3(i)]{McCR}, we have $\mathrm{gldim}(\mathcal{D}) \leq 6$.
    By \cite[Theorem~3.11]{ADPP}  $\mathcal{D}$ has a finite dimensional module, say $M$ and  the CM condition implies that  $M$ has homological dimension $\geq 6$. Hence $\mathrm{gldim}(\mathcal{D}) = 6$.

\medskip

\noindent(iv) Since  $j_{\mathcal{D}}$ is exact on finitely generated $\mathcal{D}$-modules by  \cite[Theorem~2.3]{Lev}, this follows from the CM condition.
\end{proof}

\subsection{Key lemma}\label{key}

The following lemma will be crucial in our analysis of the primitive spectrum of $\mathcal{D}$. In its proof, given an ideal B of a noetherian ring $S$, we denote by $\sqrt B$ the ideal of $S$ such that $\sqrt B/B$ is the nilradical of $S/B$.

\begin{lemma}\label{small} Let $M$ be a finitely generated (right or left) $\mathcal{D}$-module such that either $\mathrm{Ann}_{k[x]}(M) \neq 0$ or $\mathrm{Ann}_{k[u]}(M) \neq 0$. Then 
\begin{enumerate}
\item[(i)] there exists $r \geq 1$ such that 
$$ (\mathfrak{m}_+\mathcal{D})^r \subseteq (\mathcal{O}(G)^+\mathcal{D})^r \subseteq \mathrm{Ann}_{\mathcal{D}}(M); $$
\item[(ii)] $\GKdim(M) \leq 3$.
\end{enumerate}
\end{lemma}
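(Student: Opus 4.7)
The plan is to exploit the commutative normal Hopf subalgebra $\mathcal{O}(G) = k[x,u,g^{\pm 1}]$ supplied by Theorem~\ref{before}(i). I will set $I := \mathrm{Ann}_{\mathcal{D}}(M) \cap \mathcal{O}(G)$; since $\mathcal{O}(G)$ is adjoint-stable in $\mathcal{D}$, $I$ will be an ideal of $\mathcal{O}(G)$ preserved by the adjoint derivations $D_\zeta := [\zeta,-]$, $D_y := [y,-]$ and $D_v := [v,-]$, which act on the generators of $\mathcal{O}(G)$ by
\begin{align*}
D_\zeta(x) &= x, & D_\zeta(u) &= -u, & D_\zeta(g) &= 0, \\
D_y(x) &= -\hhalf x^2, & D_y(u) &= g-1, & D_y(g) &= -xg, \\
D_v(x) &= 1-g+xu, & D_v(u) &= -\hhalf u^2, & D_v(g) &= gu,
\end{align*}
all read off from the defining relations of Subsection~\ref{notation}. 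The heart of (i) will be to show that any such proper adjoint-stable ideal meeting either $k[x]$ or $k[u]$ nontrivially contains a power of $\mathcal{O}(G)^+ = \langle x,u,g-1\rangle$.

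Assume first $I \cap k[x] \neq 0$ and write $I \cap k[x] = (p(x))$ with $p$ monic of positive degree. Stability under $D_\zeta|_{k[x]} = x\,d/dx$ forces $p(x) \mid x p'(x)$, and a brief logarithmic-derivative analysis shows this only occurs when $p(x) = x^n$; so $x^n \in I$ and $x \in \sqrt I$. Seidenberg's theorem --- that in characteristic zero a derivation stabilising an ideal also stabilises its radical --- then lets $D_v$ propagate information: $D_v(x) = 1 - g + xu \in \sqrt I$ combined with $x \in \sqrt I$ yields $1-g \in \sqrt I$, and then $D_v(g) = gu \in \sqrt I$ combined with the invertibility of $g$ gives $u \in \sqrt I$. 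Hence $\sqrt I \supseteq \mathcal{O}(G)^+$, and since $I$ is proper we have equality; Noetherianness now supplies an integer $r \geq 1$ with $(\mathcal{O}(G)^+)^r \subseteq I$. The case $I \cap k[u] \neq 0$ runs symmetrically: one obtains $u^m \in I$ for some $m$, so $u \in \sqrt I$, and then two applications of $D_y$ produce $g-1 \in \sqrt I$ and then $x \in \sqrt I$. Finally, the normality of $\mathcal{O}(G)$ in $\mathcal{D}$ upgrades this to $(\mathcal{O}(G)^+\mathcal{D})^r = (\mathcal{O}(G)^+)^r\mathcal{D} \subseteq \mathrm{Ann}_{\mathcal{D}}(M)$, and the first inclusion in (i) will follow from $\mathfrak{m}_+ \subseteq \mathcal{O}(G)^+\mathcal{D}$ recorded in Notation~\ref{maxspecZ}(ii).

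For (ii), I will filter $M$ by $M \supseteq (\mathcal{O}(G)^+\mathcal{D})M \supseteq \cdots \supseteq (\mathcal{O}(G)^+\mathcal{D})^r M = 0$. Each successive quotient is a finitely generated module over the Hopf algebra quotient $\mathcal{D}/\mathcal{O}(G)^+\mathcal{D} \cong U(\mathfrak{sl}_2(k))$ of Theorem~\ref{before}(ii), and therefore has GK dimension at most $\GKdim(U(\mathfrak{sl}_2(k))) = 3$. Exactness of GK dimension on finitely generated $\mathcal{D}$-modules, Proposition~\ref{homprop}(iv), then delivers $\GKdim(M) \leq 3$.

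The main obstacle is the radical-propagation step in (i): $I$ need not itself be radical, so one has to know that $\sqrt I$ is again stabilised by $D_v$ (respectively $D_y$) before one can cascade from $x \in \sqrt I$ (respectively $u \in \sqrt I$) through the remaining generators of $\mathcal{O}(G)^+$. Seidenberg's theorem handles this cleanly; the alternative is an explicit iteration of $D_v$ inside $\mathcal{O}(G)$, tracking successive elements of the form $x^{n-k}(1-g)^k$ and the further terms produced --- workable, but substantially messier.
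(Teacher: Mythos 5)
Your argument is correct and follows essentially the same route as the paper's: pass to the radical of $\mathrm{Ann}_{\mathcal{D}}(M)\cap\mathcal{O}(G)$, invoke the characteristic-zero fact that a derivation stabilising an ideal stabilises its radical (the paper cites \cite[Lemma~3.20]{GW} where you cite Seidenberg), and cascade via $[v,x]=1-g+xu$ and $[v,g]=gu$ to force $\mathcal{O}(G)^+$ into that radical, then finish (ii) exactly as the paper does via $\mathcal{D}/\mathcal{O}(G)^+\mathcal{D}\cong U(\mathfrak{sl}_2(k))$ and exactness of GK dimension. The only (harmless) divergence is the preliminary step of extracting $x^N$ (resp.\ $u^m$) from the annihilator, which you do with the Euler derivation $[\zeta,-]$ and a divisibility argument in $k[x]$, whereas the paper uses the fact that every nonzero prime of the Jordan plane contains $x$.
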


\begin{proof} (i) Let $I := \mathrm{Ann}_{\mathcal{D}}(M)$, an ideal of $\mathcal{D}$. Assume that $I \cap k[x] \neq 0$, the proof in the other case being exactly similar, but with $k\langle u,v \rangle$ replacing $J$. One easily confirms that every non-zero prime ideal of the Jordan plane $J = k\langle x, y\rangle$ contains $x$. Therefore, since $I \cap k[x] \neq 0$, there exists $N \geq 1$ such that 
$x^N \in I \cap k[x]  \subseteq I \cap \mathcal{O}(G).$ Since $\mathcal{O}(G)$ is commutative,
\begin{equation}\label{gotcha1}x \in \sqrt(I \cap \mathcal{O}(G)). 
\end{equation}
Since $I$ is an ideal of $\mathcal{D}$, $[v,\,I] \subseteq I$; moreover, from the defining relations of $\mathcal{D}$ and the fact that $\mathcal{O}(G) = k\langle x,u,g^{\pm 1} \rangle$,
$[v, \,\mathcal{O}(G)] \subseteq \mathcal{O}(G)$. Therefore
\begin{equation}\label{caught1} [v , \, I \cap \mathcal{O}(G)] \subseteq I \cap \mathcal{O}(G).
\end{equation}
Since $k$ has characteristic 0 it follows from (\ref{caught1}) and \cite[Lemma~3.20]{GW} that
\begin{equation}\label{caught3} [v,\, \sqrt (I \cap \mathcal{O}(G))] \subseteq \sqrt (I \cap \mathcal{O}(G)).
\end{equation}
By (\ref{gotcha1}) and (\ref{caught3})
$$ [v, \, x] = 1-g + xu \in \sqrt (I  \cap \mathcal{O}(G)), $$
so that $(1 - g) \in \sqrt (I  \cap \mathcal{O}(G))$. Then 
$$ [v, \, g - 1] = [v,g] = gu \in \sqrt (I  \cap \mathcal{O}(G)), $$
so that  $u \in \sqrt (I  \cap \mathcal{O}(G))$. Since $\mathcal{O}(G)^+$ is generated by $x$, $u$ and $g - 1$ we deduce that $\mathcal{O}(G)^+ \mathcal{D} \subseteq \sqrt I$, proving (i).

\medskip 

\noindent (ii) By (i) $M$ is a finitely generated $\mathcal{D}/( \mathcal{O}(G)^+\mathcal{D})^r$-module for some $r \geq 1$. Since $\mathcal{D}/\mathcal{O}(G)^+\mathcal{D} \cong U(\mathfrak{sl}(2,k)$ by Theorem~\ref{before}(ii), and so has GK dimension 3, (ii) follows from this and Proposition~\ref{homprop}(iv).
\end{proof}

\bigskip

\bigskip

\subsection{Ore localisations of $\mathcal{D}$}\label{local} 
 
To help in the analysis of its primitive spectrum we need four  Ore localisations of $\mathcal{D}$. The first of these is described in 
\cite[Theorem~4.8]{ADPP}, and the others are similar. These sets are described as follows:

\begin{definition}\label{Ore} Label the following four subsets of $\mathcal{D}$: 
\begin{align*} A \; &:= \; \{q^i : i\geq 0 \} \, \dot\cup \, \{ x^j : j \geq 0 \}, \; \; \; B \; := \; \{s^i : i\geq 0 \} \, \dot\cup \, \{ x^j : j \geq 0 \} ,\\
C \; &:= \; \{q^i : i\geq 0 \} \, \dot\cup \, \{ u^j : j \geq 0 \}, \; \; \; D \; := \; \{s^i : i\geq 0 \} \, \dot\cup \, \{ u^j : j \geq 0 \}.
\end{align*} \end{definition}
 
\begin{lemma}\label{adnilp} {\rm (1)} The elements $x$ and $u$ act ad-locally-nilpotently on $\mathcal{D}$. Consequently, $\{x^i : i \geq 0 \}$ and $\{u^i : i \geq 0 \}$ are Ore sets in $\mathcal{D}$.

{\rm (2)} For each $\Omega\in \{A,B,C,D\}$ the set $\Omega$ is an Ore set of regular elements of $\mathcal{D}$, and we write the corresponding localisation as $\mathcal{D}_{(\Omega)}$.
\end{lemma}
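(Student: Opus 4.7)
For (1), the plan is to verify that $\mathrm{ad}\,x$ and $\mathrm{ad}\,u$ both act nilpotently on each of the algebra generators $g^{\pm1}, x, u, y, \zeta, v$ of $\mathcal{D}$. The key observation—read off directly from the relations in Subsection~\ref{notation}—is that $(\mathrm{ad}\,x)(w)$ always lands in the commutative subalgebra $\mathcal{O}(G) = k\langle x,u,g^{\pm1}\rangle$ for each generator $w$; for instance, $(\mathrm{ad}\,x)(v) = g - 1 - xu$, $(\mathrm{ad}\,x)(y) = \tfrac12 x^2$, and $(\mathrm{ad}\,x)(\zeta) = -x$, and a second application of $\mathrm{ad}\,x$ consequently kills it. The situation for $\mathrm{ad}\,u$ is entirely parallel. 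Since both are derivations, the Leibniz rule upgrades nilpotence on generators to local nilpotence on all of $\mathcal{D}$. The left and right Ore conditions for $\{x^i\}$ and $\{u^i\}$ then follow from the standard expansion
\[
x^n r \;=\; \sum_{k \ge 0}\binom{n}{k}(\mathrm{ad}\,x)^k(r)\, x^{n-k},
\]
by choosing $n$ large enough that every surviving term on the right already contains any prescribed power of $x$; regularity is automatic since $\mathcal{D}$ is a domain by Proposition~\ref{prop-ore}.

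For (2), Lemma~\ref{normal}(i) already gives that $q$ and $s$ are $\sigma$-normal, so $\{q^i\}$ and $\{s^i\}$ are two-sided Ore sets of (nonzero, hence regular) elements of $\mathcal{D}$. The next step I would carry out is a direct computation showing
\[
[x,q] \;=\; [u,q] \;=\; [x,s] \;=\; [u,s] \;=\; 0.
\]
The first two identities are essentially immediate from $q = ux + 2(1+g)$ together with $[x,u] = [u,g] = [x,g] = 0$; the two involving $s$ require expanding $s = xv + uy + (-\tfrac12 ux + g - 1)\zeta - 2(g+1)$ and using $[x,v] = g - 1 - xu$, $[x,\zeta] = -x$ (and the parallel identities for $u$) to see that the various terms cancel. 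With these commutation relations in hand, each $\Omega \in \{A,B,C,D\}$ is the multiplicative monoid generated by two commuting elements, each of which already generates an Ore set. I would conclude by iterated localisation: first invert $x$ (or $u$) using part~(1), then observe that the image of $q$ (or $s$) remains $\sigma$-normal in $\mathcal{D}[x^{-1}]$ (or $\mathcal{D}[u^{-1}]$) and so generates a further Ore set there, yielding $\mathcal{D}_{(\Omega)}$.

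I do not foresee a substantive obstacle to this plan: the argument is essentially mechanical once the two structural ingredients—local ad-nilpotence of $x,u$, and commutation of $x,u$ with $q,s$—are in hand. The one place requiring care is the bookkeeping in verifying $[x,s] = [u,s] = 0$, where several terms in the expansion of $s$ must be tracked and shown to cancel; everything else is formal.
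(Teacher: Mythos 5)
Your proof is correct. For part (1) your argument is the same as the paper's in substance: the paper cites \cite[Lemma~4.3(i)]{ADPP} for $x$ and declares the case of $u$ ``similar and easy''; your observation that $\operatorname{ad}x$ and $\operatorname{ad}u$ send every generator into the commutative subalgebra $\mathcal{O}(G)$ (which contains $x$ and $u$), so that the square of each kills all generators, is exactly the right mechanism, and the binomial expansion plus the domain property of Proposition~\ref{prop-ore} correctly yields the Ore condition and regularity. For part (2), however, you take a genuinely different route from the paper. You keep $q$ and $s$ as they are, exploit their $\sigma$-normality from Lemma~\ref{normal}(i), and verify by hand the commutations $[x,q]=[u,q]=[x,s]=[u,s]=0$ (your computations are right: e.g.\ in $[x,s]$ the terms $x(g-1)-x^2u$ from $[x,xv]$, $\tfrac12 ux^2$ from $[x,uy]$, and $\tfrac12 ux^2-(g-1)x$ from the $\zeta$-term cancel using the commutativity of $\mathcal{O}(G)$). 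The paper instead observes that, since $g$ is a unit, inverting powers of $q$ is the same as inverting powers of $z=q^2g^{-1}$, and likewise $s$ can be traded for $\theta$; one is then localising at one \emph{central} and one ad-nilpotent element, so no commutation check between $q$ (or $s$) and $x$ (or $u$) is ever needed. The paper's trick is shorter and sidesteps precisely the bookkeeping you flag as the delicate step; your version is more self-contained and has the small side benefit of recording the useful identities $[x,s]=[u,s]=0$ explicitly. One cosmetic point common to both readings: the sets $\Omega$ in Definition~\ref{Ore} are written as unions $\{q^i\}\,\dot\cup\,\{x^j\}$, and you correctly interpret them as the multiplicative monoids $\{q^ix^j\}$ (which are multiplicatively closed precisely because of the commutations you establish); this is consistent with how the paper later uses $B=\{s^ix^j\}$.
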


\begin{proof}  (1)   For $x$ this is proved in  \cite[Lemma~4.3(i)]{ADPP}. The claim for  $u$ is a similar easy consequence of the defining relations of $\mathcal{D}$.

(2) Localising at the powers of $q$ is the same as localising at the powers of $q^2$ or even at the powers
$z=q^2g^{-1}$, since $g$ is a unit. Thus, for $\Omega=A$ or $\Omega=C$ and appealing to Lemma~\ref{normal}(iii),  we can 
replace $q$ by the central element $z$. Similarly in the other two cases we can replace $s$ by the central element $\theta$. Thus in each case we wish to  localise at  one central and one locally ad-nilpotent element in the domain $\mathcal{D}$.  Thus it is indeed    an Ore set of regular elements. \end{proof}

Thus each of the four rings  $\mathcal{D}_{(\Omega)}$ is a subalgebra of the quotient division algebra $Q(\mathcal{D})$ of $\mathcal{D}$ that contains   $\mathcal{D}$. As we next show, each of these rings is a localisation of the  second Weyl algebra over a commutative ring.

\begin{notation}\label{Weyl} 
{\rm (i)}  In $\mathcal{D}_{(A)}$, set $p_A\, := \, -2q^{-1}x^{-1}y, \;\;q_A \, := \,q, \; \; t_A \, := \, qx^{-1},$ and $\eta_A \, := \, -xq^{-1}\zeta.$
 \begin{enumerate}
\item[(ii)] In $\mathcal{D}_{(B)}$, set $p_B\, := \, -2s^{-1}x^{-1}y, \;\; q_B \, := \, s, \;\; t_B \, := \, sx^{-1}.\;\; \eta_B\, := \, -xs^{-1}\zeta$.

\item[(iii)] In $\mathcal{D}_{(C)}$, set $p_C\, := \, 2q^{-1}u^{-1}v, \;\;q_C \, := \, q, \;\; t_C \, := \, q^{-1}u^{-1}, \;\; \eta_C \, := \, -uq\zeta$.

\item[(iv)] In $\mathcal{D}_{(D)}$, set $p_D\, := \, 2s^{-1}u^{-1}v, \;\; q_D \, := \, s, \;\; t_D\, := \, s^{-1}u^{-1}, \;\;  \eta_D \, := \, us\zeta.$
\end{enumerate}

We further set $z_{\Omega}:=z$ for $\Omega=A$ and $\Omega=C$ but $z_{\Omega} := \theta$ when $\Omega = B, D$.
\end{notation}

The motivation behind the above definitions becomes clear from the following lemma. For $\Omega=A$, this was obtained in the proof of \cite[Theorem~4.8]{ADPP}. The claims regarding the other elements can be checked by a similar direct calculation.

\begin{lemma}\label{relations} Let  $\Omega\in \{A,B,C,D\}.$
    Then  we have  the following relations in $Q(\mathcal{D})$:
$$ [p_\Omega, q_\Omega] \, = \, 1 \, = [\eta_\Omega,t_\Omega], \, \textit{ with all other brackets being zero;} \qed$$
\end{lemma}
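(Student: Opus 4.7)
The proof is a direct verification in $Q(\mathcal{D})$, carried out case-by-case for the four choices of $\Omega$. The case $\Omega = A$ is done in the proof of \cite[Theorem~4.8]{ADPP}; my plan is to isolate the handful of facts about $\mathcal{D}$ that drive the remaining three verifications, and then to indicate the manipulation.

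First I would collect the following computational ingredients, all drawn from Subsection~\ref{notation} and Lemma~\ref{normal}:
\begin{enumerate}
\item[(a)] $\mathcal{O}(G) = k\langle x,u,g^{\pm 1}\rangle$ is commutative and $q = ux + 2(1+g) \in \mathcal{O}(G)$, so $q$ commutes with $x$, $u$, $g^{\pm 1}$.
\item[(b)] $q$ and $s$ are $\sigma$-normal with $\sigma(y) = y + \hhalf x$, $\sigma(v) = v - \hhalf u$ and $\sigma$ fixing $x, u, g, \zeta$. Thus $[y,q] = -\hhalf xq$, $[v,q] = \hhalf uq$, with the analogues for $s$ in place of $q$; moreover $s$ itself commutes with $x$, $u$, $g^{\pm 1}$.
\item[(c)] Direct substitution of \eqref{s-q-defn} into $[\zeta,-]$, using $[\zeta,x]=x$, $[\zeta,u]=-u$, $[\zeta,y]=y$, $[\zeta,v]=-v$, $[\zeta,g]=0$, yields $[\zeta,q] = [\zeta,s] = 0$.
\item[(d)] The conjugation identities $x\zeta x^{-1} = \zeta - 1$ and $u\zeta u^{-1} = \zeta + 1$ follow immediately from $[\zeta,x]=x$ and $[\zeta,u]=-u$. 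The Jordan relation $[y,x] = -\hhalf x^2$ gives $[y,x^{-1}] = \hhalf$ and, analogously, $[u,v] = \hhalf u^2$ gives $[v,u^{-1}] = \hhalf$.
\end{enumerate}

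With these in hand each of the six brackets in each of the four cases reduces to a few lines. For example, in case $C$, $[p_C,q_C] = 2q^{-1}u^{-1}[v,q] = 2q^{-1}u^{-1}(\hhalf uq) = 1$ using (a) and (b), while $[\eta_C,t_C]$ collapses via (a) and (c) to an expression involving only $\zeta$, $u$ and $u^{-1}$ that is resolved by (d); the representative case-$A$ calculation $[\eta_A,t_A] = -x\zeta x^{-1} + \zeta = -(\zeta-1) + \zeta = 1$ proceeds in exactly the same way. The four vanishing brackets in each case fall out similarly: every product in the defining formulas can be reshuffled into an $\mathcal{O}(G)$-part (where everything commutes with $q$ and $s$) times a single occurrence of $y$, $v$ or $\zeta$, whose interactions with $q$, $s$, $x$, $u$ are exhausted by (b)--(d).

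The main obstacle is bookkeeping rather than substance: one must be vigilant about the signs produced by $\sigma$-normality and by conjugation of $\zeta$ past $x^{\pm 1}$ and $u^{\pm 1}$. A minor additional wrinkle in cases $B$ and $D$ is that $s \notin \mathcal{O}(G)$, so moving $s$ past $x$, $u$, $g$ must be justified via $\sigma$-normality from (b) rather than by literal commutativity; no new phenomena appear beyond that.
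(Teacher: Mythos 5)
Your proposal is correct and takes essentially the same approach as the paper: the paper offers no proof beyond citing \cite[Theorem~4.8]{ADPP} for $\Omega=A$ and asserting that the remaining cases follow by ``a similar direct calculation,'' which is exactly the verification you organise via commutativity of $\mathcal{O}(G)$, $\sigma$-normality of $q$ and $s$, and the $\zeta$-brackets. One caveat if you carry out case $C$ to the end: with the paper's stated sign $\eta_C = -uq\zeta$, your ingredient (d) gives $[\eta_C,t_C] = -u\zeta u^{-1}+\zeta = -1$, so the sign of $\eta_C$ in Notation~\ref{Weyl} should be positive (matching $\eta_D = us\zeta$) for the lemma to read as stated.
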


When $\Omega=A$ the following result is  given in \cite[Theorem~4.8]{ADPP}, although we give a proof that works for all 4 cases simultaneously.

\begin{theorem}\label{Weyls} For each $\Omega\in \{A,B,C,D\}$, the localisation $\mathcal{D}_{(\Omega)}$ is a localised Weyl algebra over its centre. More precisely:
  $$  \mathcal{D}_{(\Omega)} \; = \;  A_2^{(\Omega)}(k) \otimes S_{(\Omega)}, $$
where $A_2^{(\Omega)}(k)$ denotes the localisation of the second Weyl algebra over $k$ with generators $p_\Omega, q_\Omega^{\pm 1}, \eta_\Omega, t_\Omega^{\pm 1}$, while $S_{(\Omega)} $ is the commutative ring 
$S_{(\Omega)}= k[z_\Omega^{\pm 1}, \omega]$. 
\end{theorem}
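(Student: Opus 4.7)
The plan is to construct the natural $k$-algebra homomorphism
\[\phi\colon A_2^{(\Omega)}(k) \otimes_k S_{(\Omega)} \longrightarrow \mathcal{D}_{(\Omega)}\]
sending the generators of each factor to their namesakes inside $\mathcal{D}_{(\Omega)}$, and then prove that $\phi$ is bijective. For well-definedness, the only relations needed are $[p_\Omega, q_\Omega] = 1 = [\eta_\Omega, t_\Omega]$ with all other brackets between Weyl generators vanishing, which is Lemma~\ref{relations}, together with the centrality of $z_\Omega$ and $\omega$ from Lemma~\ref{normal}(iii).

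For surjectivity I verify that each of $x, u, y, \zeta, v, g^{\pm 1}$ lies in the image of $\phi$. The case $\Omega = A$ is illustrative: Notation~\ref{Weyl} yields $x = q_A t_A^{-1}$ (using that $x$ and $q$ commute), $y = -\hhalf q_A x p_A$ and $\zeta = -q_A x^{-1}\eta_A$; writing $g = q_A^2 z^{-1}$ via $z = q^2 g^{-1}$ recovers $g^{\pm 1}$, and the defining equation $q = ux + 2(1+g)$ from \eqref{s-q-defn} then gives $u$. For $v$, one first checks from the explicit formulas that $\sigma(q) = q$ and $\sigma(s) = s$, so that $q$, $s$, $g$ pairwise commute; then $s = g\omega q^{-1}$ (from $\omega = qsg^{-1}$) places $s$ in the image, and the defining equation of $s$ in \eqref{s-q-defn} solves for $v$ as a combination of elements already shown to lie in the image. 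The remaining cases $\Omega \in \{B,C,D\}$ follow the same pattern after swapping $x \leftrightarrow u$ and/or $q \leftrightarrow s$ as appropriate.

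For injectivity I first show that $A_2^{(\Omega)}(k)$ is simple with centre $k$. Write it as $A_1(k)[q_\Omega^{\pm 1}] \otimes_k A_1(k)[t_\Omega^{\pm 1}]$: each factor is an Ore localisation of the simple algebra $A_1(k)$, hence remains simple, and has centre $k$ by an elementary computation in its PBW basis; the tensor product of two central simple $k$-algebras is again central simple. Set $I := \ker \phi$. The central subalgebra $S_{(\Omega)} = k[z_\Omega^{\pm 1},\omega]$ embeds into the domain $\mathcal{D}_{(\Omega)}$ (the algebraic independence of $z_\Omega$ and $\omega$ over $k$ follows from Theorem~\ref{before}(iv) once $z_\Omega$ is inverted), so $I \cap S_{(\Omega)} = 0$. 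A standard argument then forces $I = 0$: take a nonzero $\xi \in I$ with the minimum number of summands in an expression $\xi = \sum_i a_i \otimes b_i$ with the $b_i$ linearly independent over $k$; use simplicity of $A_2^{(\Omega)}(k)$ to normalise $a_1 = 1$; take commutators $[c \otimes 1, \xi] \in I$ for $c \in A_2^{(\Omega)}(k)$ to obtain strictly shorter elements of $I$, so by minimality $a_i \in Z(A_2^{(\Omega)}(k)) = k$ for all $i$, which puts $\xi$ in $I \cap S_{(\Omega)} = 0$, a contradiction.

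I expect the main obstacle to be the surjectivity verification in the cases $\Omega \in \{C,D\}$, where the localisation inverts powers of $u$ rather than $x$. The defining relations $[u,y] = 1-g$ and $[v,x] = 1-g + xu$ are not perfectly symmetric in $x$ and $u$, so extracting $y$ and $v$ from the asymmetric expressions for $p_\Omega$ and $\eta_\Omega$ demands careful tracking of which elements commute and in what order, especially in reconstructing $v$ via the formula for $s$.
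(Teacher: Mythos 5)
Your proposal is correct, and its overall architecture — a surjection from $A_2^{(\Omega)}(k)\otimes_k S_{(\Omega)}$ onto $\mathcal{D}_{(\Omega)}$ whose kernel is then shown to vanish — matches the paper's, which likewise builds the subalgebra $\mathcal{E}_{(\Omega)} = S_{(\Omega)}\langle p_\Omega,q_\Omega,t_\Omega,\eta_\Omega\rangle$, checks by generator-chasing that it is all of $\mathcal{D}_{(\Omega)}$, and presents $\mathcal{E}_{(\Omega)}$ as a factor of $V_{(\Omega)} = S_{(\Omega)}\otimes_k A_2^{(\Omega)}(k)$. Where you genuinely diverge is the injectivity step. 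The paper argues by Gelfand--Kirillov dimension: since only a central and an ad-nilpotent element are inverted, $\GKdim(\mathcal{D}_{(\Omega)}) = \GKdim(\mathcal{D}) = 6 = \GKdim(V_{(\Omega)})$, and a proper factor of the domain $V_{(\Omega)}$ would have strictly smaller GK dimension by \cite[Corollary~8.3.6]{McCR}. You instead observe that $A_2^{(\Omega)}(k)$ is central simple over $k$ (as a tensor product of two localised first Weyl algebras, each simple with centre $k$), so every ideal of $A_2^{(\Omega)}(k)\otimes_k S_{(\Omega)}$ is extended from $S_{(\Omega)}$, and the kernel meets $1\otimes S_{(\Omega)}$ trivially because $z_\Omega$ and $\omega$ are algebraically independent in $Z(\mathcal{D})$ by Theorem~\ref{before}(iv). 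Your route avoids the GK-dimension computations for the localisations (and the appeal to \cite{KL} and \cite{McCR}) at the cost of the central-simple-tensor-product machinery and the independence of $z_\Omega,\omega$; the paper's route is shorter given that $\GKdim(\mathcal{D})=6$ has already been established in Proposition~\ref{homprop}, and it reuses infrastructure needed elsewhere. Your identified obstacle (reconstructing $y$ and $v$ in the cases $\Omega\in\{C,D\}$) is real but surmountable exactly as you suggest: one recovers $s = g\omega q^{-1}$ (or $q = g\omega s^{-1}$) from the centrality computations $\sigma(q)=q$, $\sigma(s)=s$, and then solves the defining equations \eqref{s-q-defn} for the missing generators; the paper is no more explicit on this point than you are.
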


\begin{proof}  The generators $z, \, \omega$ and $\theta$ of $Z(\mathcal{D})$ are given in Lemma~\ref{normal}(iii),  from which 
it follows that the subalgebra $S_{(\Omega)}$    of $Q(\mathcal{D})$ is  contained   in the centre $Z(\mathcal{D}_{(\Omega)})$.
 Therefore we can consider the subalgebra 
\begin{equation}\label{into} \mathcal{E}_{(\Omega)} \; := \; S_{(\Omega)}\langle p_\Omega,q_\Omega,t_\Omega,\eta_\Omega \rangle \ \subseteq \  \mathcal{D}_{(\Omega)}.
\end{equation} 
We claim that
the  inclusion (\ref{into}) is an equality.  
In order to prove this, check that   given  generators of  $\mathcal{D}_{\Omega}$ are 
contained in $ \mathcal{E}_{(\Omega)}$. Thus, for example, when  $\Omega=A$, one shows that
 $ \{q^{-1}, x^{\pm 1}, y, \zeta, g^{\pm 1}, u,v \} \subset \mathcal{E}_{(A)},$ and similarly in the other cases. Thus, 
 $\mathcal{E}_{(\Omega)}  =  \mathcal{D}_{(\Omega)}, $ as claimed.
As noted in the proof of Lemma~\ref{adnilp} 
the localisation of $\mathcal{D}$ at $\Omega$   involves inverting one  central and one  ad-nilpotent element of $\mathcal{D}$.  Thus, by Proposition~\ref{homprop}(iii) and 
  \cite[Lemma~4.7]{KL},  $\GKdim(D_{(\Omega})=\GKdim( \mathcal{D})=6$. We conclude that 
  $  \GKdim (\mathcal{E}_{(\Omega)})  =\GKdim (\mathcal{D})=6.$

On the other hand,  by Lemma~\ref{relations} $\mathcal{E}_{(\Omega)}$ is a factor of the  ring 
\begin{equation*}\label{whole} V_{(\Omega)} \, := \, S_{(\Omega)} \otimes_k A_2^{(\Omega)}(k),\end{equation*}  
which is also a domain of GK-dimension 6. So if $\mathcal{E}_{(\Omega)}$ were a proper factor of   $V_{(\Omega)},$ then 
\cite[Corollary~8.3.6]{McCR} would  imply that
$\GKdim(\mathcal{E}_{(\Omega)}) <  6, $  giving a contradiction.

So the only  possibility is that 
$ \mathcal{E}_{(\Omega)} \; \cong \; V_{(\Omega)} = S_{(\Omega)} \otimes_k A_2^{(\Omega)}(k),$
as required.  \end{proof}

\bigskip

\section{The primitive spectrum of $\mathcal{D}$}\label{privspec}

In this section we describe the primitive spectrum of $\mathcal{D}$. This splits naturally into several cases:
\begin{itemize}
\item the primitive ideals  not containing  $ \mathfrak{m}_+$ or $  \mathfrak{m}_0$; these are the generic ones;
\item the  ideal $ \mathfrak{m}_+\mathcal{D}$, which is also primitive;
\item  the  ideal $  \mathfrak{m}_0\mathcal{D}$, for which $\sqrt{\mathfrak{m}_0\mathcal{D}}$ is a unique prime ideal $P_0$; 
\item finally, $P_0$ is also maximal.
\end{itemize}
The details are given in the next four subsections with  the results being  combined in Subsection~\ref{privshape}.

In this section  and in Section~\ref{primesec} we will without further reference  use of the yoga for prime ideals of Noetherian rings under Ore localisation as described in, for example, \cite[Theorems~10.18 and  10.20]{GW}. We use Notation~\ref{maxspecZ} to describe the maximal ideals of $Z(\mathcal{D})$ and Definition~\ref{Ore} to define Ore sets in $\mathcal{D}$.

\subsection{The generic minimal primitives}\label{generic}

We begin by looking at the generic case.

\begin{theorem}\label{genpriv} Let $\mathfrak{m}$ be a maximal ideal of $Z(\mathcal{D})$ with $\mathfrak{m} \neq \mathfrak{m}_+$ and $\mathfrak{m} \neq \mathfrak{m}_0$. Then the following are true.
\begin{enumerate}
\item[(i)] $\mathfrak{m}\mathcal{D}$ is a completely prime maximal ideal of $\mathcal{D}$.
\item[(ii)] The localisation of $\mathcal{D}/\mathfrak{m}\mathcal{D}$ at the powers of (the image of) either $x$ or   $u$ is isomorphic to a localised Weyl algebra $A^{(\Omega)}_2(k)$,  where $\Omega\in\{A,B,C,D\}$.
\item[(iii)] $\GKdim(\mathcal{D}/\mathfrak{m}\mathcal{D}) = 4$.
\item[(iv)] $\mathfrak{m}\mathcal{D}$ is generated by a central regular sequence of length 2.
\item[(v)] $\mathcal{D}/\mathfrak{m}\mathcal{D}$ is CM and is Auslander Gorenstein  with $\mathrm{injdim}(\mathcal{D}/\mathfrak{m}\mathcal{D})< 4.$
\end{enumerate}
\end{theorem}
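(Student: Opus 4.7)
The plan is to reduce all five assertions to statements about the Ore localisations $\mathcal{D}_{(\Omega)}$ identified by Theorem~\ref{Weyls}, with the Key Lemma~\ref{small} providing the bridge from the localisation back to $\mathcal{D}/\mathfrak{m}\mathcal{D}$ itself. Since $\mathfrak{m}\neq\mathfrak{m}_0$, at least one of $z,\theta$ is a nonzero scalar modulo $\mathfrak{m}$; I will assume $z\equiv \zeta_0\in k^{\ast}\pmod{\mathfrak{m}}$ and pick $\Omega\in\{A,C\}$ so that $z_\Omega=z$ and $S_{(\Omega)}=k[z^{\pm 1},\omega]$ (the case $\theta\notin\mathfrak{m}$ being symmetric, with $\Omega\in\{B,D\}$). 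The relation $q^2=zg$ then shows the image of $q$ in $\mathcal{D}/\mathfrak{m}\mathcal{D}$ is a unit, so localising $\mathcal{D}/\mathfrak{m}\mathcal{D}$ at $\Omega$ is the same as localising only at the powers of $x$ (or $u$ when $\Omega=C$).

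For (ii), Theorem~\ref{Weyls} gives $\mathcal{D}_{(\Omega)}/\mathfrak{m}\mathcal{D}_{(\Omega)} \cong A_2^{(\Omega)}(k)\otimes (S_{(\Omega)}/\mathfrak{m} S_{(\Omega)}) \cong A_2^{(\Omega)}(k)$, since the image of $\mathfrak{m}$ in the regular 2-dimensional algebra $S_{(\Omega)}$ is a maximal ideal with residue field $k$; (ii) then follows once no $\Omega$-torsion in $\mathcal{D}/\mathfrak{m}\mathcal{D}$ is established. For (i) I argue as follows. The $x$-torsion subset $T$ of $\mathcal{D}/\mathfrak{m}\mathcal{D}$ is a two-sided ideal: right stability is immediate, and left stability follows from the ad-local-nilpotence of $x$ (Lemma~\ref{adnilp}) via the identity $x^L r=\sum_j\binom{L}{j}(\mathrm{ad}\,x)^j(r)\,x^{L-j}$, which has only finitely many nonzero summands. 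Hence $T$ is finitely generated, so annihilated uniformly by some $x^N$, and Lemma~\ref{small}(i) gives $(\mathfrak{m}_+\mathcal{D})^r\subseteq\ann_{\mathcal{D}}(T)$. Combined with $\mathfrak{m}\mathcal{D}\subseteq \ann_{\mathcal{D}}(T)$ and the coprimality $\mathfrak{m}+\mathfrak{m}_+=Z(\mathcal{D})$, which yields $\mathfrak{m}\mathcal{D}+(\mathfrak{m}_+\mathcal{D})^r=\mathcal{D}$, this forces $T=0$. Together with $q$ being a unit, $\mathcal{D}/\mathfrak{m}\mathcal{D}$ embeds as a subring of the simple domain $A_2^{(\Omega)}(k)$, yielding complete primeness. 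Maximality is proved analogously: any proper $J\supsetneq\mathfrak{m}\mathcal{D}$ has image in $A_2^{(\Omega)}(k)$ either zero (impossible by the absence of $\Omega$-torsion just established) or everything, forcing some $x^n\in J$; applying Lemma~\ref{small}(i) to the cyclic bimodule $\mathcal{D}/J$ then gives $J=\mathcal{D}$ by the same coprimality.

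For (iii), the embedding $\mathcal{D}/\mathfrak{m}\mathcal{D}\hookrightarrow A_2^{(\Omega)}(k)$ is an Ore localisation at the powers of the ad-locally-nilpotent element $x$, so \cite[Lemma~4.7]{KL} gives $\GKdim(\mathcal{D}/\mathfrak{m}\mathcal{D})=\GKdim(A_2^{(\Omega)}(k))=4$. Given (iii), part (iv) is deducible from the CM property of $\mathcal{D}$ (Proposition~\ref{homprop}): picking the two central generators $f_1,f_2\in Z(\mathcal{D})$ of $\mathfrak{m}$ singled out in Notation~\ref{maxspecZ}, the element $f_1$ is a non-zero-divisor in the domain $\mathcal{D}$ so $\mathcal{D}/f_1\mathcal{D}$ is CM with $\GKdim=5$; the equality $\GKdim(\mathcal{D}/\mathfrak{m}\mathcal{D})=4=5-1$ then forces $f_2$ to be a non-zero-divisor modulo $f_1\mathcal{D}$, via the standard noncommutative CM principle that $\GKdim$ drops by exactly $1$ upon quotienting by a central element iff that element is regular.

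Finally (v) follows from (iv) by standard deformation arguments (cf.\ \cite{Lev}): quotienting the Auslander regular, CM ring $\mathcal{D}$ of injective dimension 6 by a central regular sequence of length 2 yields a CM, Auslander Gorenstein ring with $\mathrm{injdim}\leq 4$. For the strict inequality, observe from (i) that $\mathcal{D}/\mathfrak{m}\mathcal{D}$ is an infinite-dimensional simple domain, hence has no nonzero finite-dimensional modules; the CM property of $\mathcal{D}/\mathfrak{m}\mathcal{D}$ itself then gives $\GKdim(M)\geq 1$ for every nonzero finitely generated module $M$, so $j_{\mathcal{D}/\mathfrak{m}\mathcal{D}}(M)\leq 3$ and $\mathrm{injdim}(\mathcal{D}/\mathfrak{m}\mathcal{D})\leq 3<4$. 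The main obstacle in the entire argument is the regularity of $x$ on $\mathcal{D}/\mathfrak{m}\mathcal{D}$ in (i): packaging the potential $x$-torsion into a two-sided ideal via ad-local-nilpotence is exactly what makes Lemma~\ref{small} apply uniformly, after which the coprimality of $\mathfrak{m}$ with $\mathfrak{m}_+$ does the killing.
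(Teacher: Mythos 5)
Your proposal is correct and follows essentially the same route as the paper: localise via Theorem~\ref{Weyls}, kill the $x$- (or $u$-) torsion using Lemma~\ref{small} together with the coprimality of $\mathfrak{m}$ and $\mathfrak{m}_+$, deduce GK-dimension $4$ from the localised Weyl algebra, and obtain the regular sequence and homological statements from the CM/GK-homogeneity machinery of \cite{GL} and \cite{Lev} plus the absence of finite-dimensional modules over the simple quotient. The only differences are presentational (you package the obstruction as the torsion ideal $T$ rather than as $P_{\mathfrak{m}}/\mathfrak{m}\mathcal{D}$, and your ``GK-dimension drops by one iff regular'' principle is the paper's GK-homogeneity argument in disguise).
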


\begin{proof} (i), (ii)   
By Notation~\ref{maxspecZ},  $\mathfrak{m} = \langle z - \alpha, \omega - \beta, \theta - \gamma \rangle$ with $\alpha, \beta, \gamma \in k$ and $\alpha\gamma = \beta^2$. Moreover, thanks to the hypothesis on $\mathfrak{m}$, either $(a)$ $\alpha \neq 0$ or $(b)$ $\gamma \neq 0$.

Assume $(a)$. We prove (ii) for the localisation at the powers of $x$. (The arguments for powers of $u$ are exactly similar, but using the Ore sets $C$ and $D$ rather than $A$ and $B$.) Using the notation of $\S$\ref{local} and applying Theorem~\ref{Weyls}, we see that $\mathfrak{m}\mathcal{D}_{(A)}$ is a maximal ideal of 
$\mathcal{D}_{(A)}$. Observe that, since $A := \{q^i, x^j : i,j \geq 0 \}$ and
$$ z = q^2 g^{-1} \equiv \alpha \neq 0 \, \textit{mod}(\mathfrak{m}\mathcal{D}), $$
$A^{(A)}_2(k)$ is isomorphic to the localisation of $\mathcal{D}/\mathfrak{m}\mathcal{D}$ at the powers of $x$. Define
$$ P_{\mathfrak{m}} \, := \, \mathfrak{m}\mathcal{D}_{(A)} \cap \mathcal{D},$$
so that $P_{\mathfrak{m}}$ is a completely prime ideal of $\mathcal{D}$ with $\mathfrak{m}\mathcal{D} \subseteq P_{\mathfrak{m}}$. By definition of $P_{\mathfrak{m}}$,
\begin{equation}\label{cover} \mathfrak{m}\mathcal{D}_{(A)} \; = \; P_{\mathfrak{m}}\mathcal{D}_{(A)}. 
\end{equation}

We claim that in fact
\begin{equation}\label{nogap} P_{\mathfrak{m}} \; = \; \mathfrak{m}\mathcal{D}.
\end{equation}
Since $\mathcal{D}$ is (left) noetherian there exist $e_1, \ldots , e_t \in P_{\mathfrak{m}}$ such that $P_{\mathfrak{m}} = \mathfrak{m}\mathcal{D} + \sum_{i=1}^t \mathcal{D}e_i$. By (\ref{cover}), for each $i = 1, \ldots , t$ there exist $f_i \in \mathfrak{m}\mathcal{D}$ and $s_i \in \mathbb{Z}_{\geq 0}$ such that 
\begin{equation}\label{shot} e_i \; = \; f_i x^{-s_i}. 
\end{equation}
Define  $ s := \mathrm{max}\{s_i : 1 \leq i \leq t \} \in \mathbb{Z}_{\geq 0}$, and  
$$ I \; := \; \{ \tau \in \mathcal{D} : P_{\mathfrak{m}}\tau \subseteq \mathfrak{m}\mathcal{D} \}. $$
Thus $I$ is an ideal of $\mathcal{D}$ containing $\mathfrak{m}\mathcal{D}$ and, by (\ref{shot}), 
$ x^s \in I. $
If $s = 0$ then $I = \mathcal{D}$; otherwise we see from Lemma~\ref{small} that $(\mathfrak{m}_+)^r \subset I $ for some $r \geq 1$. Since also $\mathfrak{m} \subseteq I$ and $\mathfrak{m} \neq \mathfrak{m}_+$ by hypothesis, it follows that $I = \mathcal{D}$, and (\ref{nogap}) is proved. 

In case $(a)$ it remains only to prove that $P_{\mathfrak{m}}$ is a maximal ideal of $\mathcal{D}$. Suppose then that $J$ is an ideal of $\mathcal{D}$ with $P_{\mathfrak{m}} \subsetneq  J$. Then $J\mathcal{D}_{(A)} = D_{(A)}$ by the maximality of the ideal $P_{\mathfrak{m}}\mathcal{D}_{(A)}$ of $\mathcal{D}_{(A)}$. Again using the fact that $q + \mathfrak{m}\mathcal{D}$ is a unit of $\mathcal{D}/\mathfrak{m}\mathcal{D}$ we see that $x^s \in J$ for some $s \geq 1$. Then, as before, Lemma~\ref{small} implies that $J = \mathcal{D}$.

\medskip
Suppose that $(b)$ holds rather than $(a)$. Then the element $s$ is a unit $\textit{mod}\, \mathfrak{m}\mathcal{D}$, so we use the same argument as for $(a)$, but working with $\mathcal{D}_{(C)}$ rather than $\mathcal{D}_{(A)}$.

\medskip

(iii) By (ii) and \cite[Example~3.7 and Theorem~4.9]{KL} the localisation of $\mathcal{D}/\mathfrak{m}\mathcal{D}$ at the powers of $x$ has GK dimension 4. Since $\mathrm{ad}(x)$ acts nilpotently on $\mathcal{D}/\mathfrak{m}\mathcal{D}$ by Lemma~\ref{adnilp}, it follows from \cite[Theorem~4.9]{KL} that $\GKdim(\mathcal{D}/\mathfrak{m}\mathcal{D}) \; = \; 4.$

\medskip

(iv) Again we assume $(a)$ that $z - \alpha \in \mathfrak{m}$ for $\alpha \in k \setminus \{0\}$, the proof in case $(b)$ being similar. We can begin a regular central sequence in $\mathfrak{m}\mathcal{D}$ with $z - \alpha$. Since $\mathcal{D}$ is CM of GK-dimension 6 by Proposition~\ref{homprop}$(i,iii)$, it follows from \cite[Theorem~7.2(b)]{GL} that $\mathcal{D}/(z - \alpha)\mathcal{D}$ is CM of GK-dimension 5. Moreover, by \cite[Remark~2.4]{Lev} the CM property ensures that $\mathcal{D}/(z - \alpha)\mathcal{D}$ is GK-homogeneous; that is, it contains no non-zero ideal with GK-dimension strictly less than 5. Since $Z(\mathcal{D})/(z - \alpha)Z(\mathcal{D})$ is a polynomial algebra we can choose $y \in \mathfrak{m}$ such that $\mathfrak{m} = \langle z - \alpha, y \rangle$. If $y + (z - \alpha)\mathcal{D}$ is a zero divisor in $\mathcal{D}/(z - \alpha)\mathcal{D}$ we obtain a non-zero ideal of  $\mathcal{D}/(z - \alpha)\mathcal{D}$ killed by $\mathfrak{m}\mathcal{D}$, contradicting the GK-homogeneity of  $\mathcal{D}/(z - \alpha)\mathcal{D}$ in view of (iii). Thus $\{z - \alpha, y \}$ is a regular central sequence in $\mathfrak{m}\mathcal{D}$. 

\medskip
(v) Since $\mathcal{D}$ is CM by Proposition~\ref{homprop}(i), $R=\mathcal{D}/\mathfrak{m}\mathcal{D}$ is CM with $\GKdim(R)= 4$ by (iv) and two applications of \cite[Theorem~7.2(b)]{GL}. The Auslander Gorenstein property is given by (iv) and \cite[$\S$3.4, Remark (3)]{Lev}. As $R$ is simple it cannot have a finite dimensional module. Hence   $\mathrm{injdim}(R)<4$ follows from the next lemma. 
\end{proof}

The following observation is well-known.
\begin{lemma}\label{trivial} Let $R$ be a noetherian, Auslander Gorenstein, CM ring and write  $\GKdim(R)=m$. Then $\mathrm{injdim}(R)\leq m$. Moreover  $\mathrm{injdim}(R)= m\iff $ $R$ has a finite dimensional representation.
\end{lemma}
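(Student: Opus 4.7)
The plan is to exploit the CM identity $j_R(M) + \GKdim(M) = m$ (valid for every nonzero finitely generated $R$-module $M$) together with the Gorenstein condition, used in standard ``grade-shift'' fashion. Set $d := \mathrm{injdim}(R)$, which is finite because $R$ is Auslander Gorenstein.

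To prove $d \leq m$, I would produce a nonzero finitely generated $R$-module $N$ with $j_R(N) = d$. By the very definition of injective dimension, there exists a finitely generated left $R$-module $M$ with $\Ext^d_R(M,R) \neq 0$. Let $N := \Ext^d_R(M,R)$. A resolution of $M$ by finitely generated projective left $R$-modules (available since $R$ is noetherian) exhibits $N$ as a subquotient of some $\mathrm{Hom}_R(P_d, R)$, hence as a finitely generated right $R$-module. Applying the Gorenstein condition with $q = d$ and taking $N$ as the (improper) submodule of itself yields $\Ext^p_R(N, R) = 0$ for all $p < d$, so $j_R(N) \geq d$. The reverse inequality $j_R(N) \leq d$ is automatic from $\mathrm{injdim}(R) = d$. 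Hence $j_R(N) = d$, and the CM formula applied to $N$ gives $d + \GKdim(N) = m$; since $\GKdim(N) \geq 0$ this forces $d \leq m$.

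For the equivalence: if $d = m$ then the $N$ constructed above has $\GKdim(N) = 0$, so $N$ is a nonzero finite-dimensional representation of $R$. Conversely, if $V$ is any nonzero finite-dimensional $R$-module, then $\GKdim(V) = 0$, so by the CM formula $j_R(V) = m$, giving $\Ext^m_R(V,R) \neq 0$; this forces $\mathrm{injdim}(R) \geq m$, and combined with the first part yields $d = m$.

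The argument is essentially routine once the CM and Auslander Gorenstein machinery is in place; the only point requiring care is the verification that $\Ext^d_R(M,R)$ is finitely generated on the opposite side (so that the CM formula may legitimately be applied to it), which is handled by choosing a resolution by finitely generated projectives. I would not expect any serious obstacle here.
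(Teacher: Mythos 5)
Your argument is correct and follows essentially the same route as the paper: choose $M$ with $\Ext^n_R(M,R)\neq 0$ for $n=\mathrm{injdim}(R)$, manufacture from it a nonzero finitely generated module of grade exactly $n$, and feed that into the CM identity $j_R(-)+\GKdim(-)=m$ (plus the converse direction via a finite-dimensional module having grade $m$). The only cosmetic difference is that you work with $N=\Ext^n_R(M,R)$ and apply the Auslander condition directly (noting that finiteness of grade, which follows from the CM identity, gives $j_R(N)\leq n$), whereas the paper uses the bidual $\Ext^n(\Ext^n(M,R),R)$ via Levasseur's spectral sequence; both are valid.
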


\begin{proof} Let $n=\mathrm{injdim}(R)$ and pick a finitely generated $R$-module $M$ such that $\Ext^n_R(M,R)\not=0$. By
the  Auslander condition and the spectral sequence \cite[Theorem~2.2]{Lev} $j(E^{nn}(M))=n$ for $E^{nn}=\Ext^n(\Ext^n(M,R),R)$. 
By the CM property $\GKdim(E^{nn}(M)) = m-n$ and the result follows easily.
\end{proof}

\bigskip

\subsection{Non-generic minimal primitives (I) - $\mathfrak{m}_+$}\label{plus}
The next case to  consider is  $\mathfrak{m}\mathcal{D}$ for $\mathfrak{m} =\mathfrak{m}_+$, as we do here.  Recall from 
Notation~\ref{maxspecZ}(ii) that $\mathfrak{m}^+ = \mathcal{D}^+ \cap Z(\mathcal{D}) = \langle \omega+16, \theta - 16 \rangle$.

We start with a subsidiary result, which works for any field $k$ of characteristic zero.

\begin{theorem}\label{Nullstel}
$\mathcal{D}$ is a Jacobson ring that satisfies the Nullstellensatz, in other words:
\begin{enumerate}
\item[(i)] every prime ideal of $\mathcal{D}$ is an intersection of primitive ideals;
\item[(ii)] for every simple $\mathcal{D}$-module $M$,  $\mathrm{End}_{\mathcal{D}}(M)$ is algebraic over $k$. In particular, every primitive ideal of $\mathcal{D}$ contains a maximal ideal of  $Z(\mathcal{D})$.
\end{enumerate}
\end{theorem}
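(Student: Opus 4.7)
The strategy is to reduce both parts to the classical Hilbert Nullstellensatz for the base ring $\mathcal{O}(G)$ by exploiting the iterated Ore extension structure \eqref{IHOE}. Being a commutative affine $k$-algebra, $\mathcal{O}(G) = k[x,u][g^{\pm 1}]$ is Noetherian and Jacobson, and the simple $\mathcal{O}(G)$-modules have endomorphism rings that are finite field extensions of $k$; in other words, $\mathcal{O}(G)$ satisfies both (i) and the first clause of (ii).

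The key input is a transfer theorem for Ore extensions in the style of McConnell and Irving: if $R$ is a Noetherian $k$-algebra that is Jacobson and satisfies the Nullstellensatz in the sense of (ii), then so is $R[\theta;\sigma,\delta]$ for any $k$-algebra automorphism $\sigma$ of $R$ and $\sigma$-derivation $\delta$. This is recorded for instance as \cite[Theorems~9.4.21 and~9.4.22]{McCR}. Applying it successively along \eqref{IHOE} — the first two layers have $\sigma = \mathrm{id}$, while the third has the automorphism $\tau$ — yields both (i) and the first assertion of (ii) for $\mathcal{D}$.

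For the final clause of (ii), let $M$ be a simple $\mathcal{D}$-module and set $P = \mathrm{ann}_{\mathcal{D}}(M)$. The central action furnishes a ring homomorphism $\phi \colon Z(\mathcal{D}) \to \mathrm{End}_{\mathcal{D}}(M)$ with kernel $Z(\mathcal{D}) \cap P$. Since $Z(\mathcal{D})$ is an affine $k$-algebra by Theorem~\ref{before}(iv), the image $\mathrm{Im}(\phi)$ is a finitely generated commutative $k$-algebra, and it is algebraic over $k$ by the first part of (ii) already established; therefore $\mathrm{Im}(\phi)$ is finite-dimensional over $k$. As a commutative subring of the division ring $\mathrm{End}_{\mathcal{D}}(M)$, it is a field, so $Z(\mathcal{D}) \cap P$ is a maximal ideal of $Z(\mathcal{D})$ contained in $P$. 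The main obstacle is simply locating a transfer statement robust enough to cover all three layers of \eqref{IHOE}; once this is in hand the argument is essentially mechanical, with no computation peculiar to $\mathcal{D}$.
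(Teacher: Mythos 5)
Your argument is correct in substance but follows a genuinely different route from the paper. The paper does not touch the Ore-extension structure at all for this result: it observes from \eqref{grade} that $\mathcal{D}$ carries a filtration whose associated graded ring is commutative affine, invokes \cite[Corollary~1.7]{McCSt} to replace this by a filtration by \emph{finite-dimensional} subspaces still having commutative affine associated graded ring, and then quotes \cite[Theorem~0.4]{ASZ} to get the Jacobson property and the Nullstellensatz in one stroke. Your route instead runs the induction along \eqref{IHOE} using the constructible-algebra machinery of McConnell--Robson (Duflo's method). Both work; the filtration argument is shorter to state and is robust under any presentation of $\mathcal{D}$ admitting such a filtration, while yours makes visible exactly which structural features (automorphism-type Ore extensions over a commutative affine base, plus a central localisation at $g$) are being used, and it generalises to iterated Hopf Ore extensions that need not admit an almost-commutative filtration. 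One caveat: the one-step transfer you state --- ``$R$ Noetherian, Jacobson and satisfying the Nullstellensatz implies the same for $R[\theta;\sigma,\delta]$'' --- is not quite the form of the theorem in \cite{McCR}; what is proved there is the statement for the whole class of constructible $k$-algebras, where the induction carries a generic flatness condition rather than the Nullstellensatz itself (and the localisation step imposes mild hypotheses on the Ore set). Since $\mathcal{D}$ is manifestly constructible over $k$ (polynomial extensions giving $k[x,u,g]$, the central localisation at powers of $g$, then three Ore extensions whose twists are automorphisms), the conclusion you draw is nevertheless valid. Your explicit central-character argument for the final clause of (ii) is correct and fills in a step the paper leaves implicit; note that since $k$ is algebraically closed the image of $Z(\mathcal{D})$ is in fact $k$ itself.
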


\begin{proof} By \eqref{grade}, $\mathcal{D}$ has a filtration $\mathcal{F}$ such that the associated graded ring $\gr_{\mathcal{F}}(\mathcal{D})$ is a commutative affine ring. Hence by \cite[Corollary~1.7]{McCSt} there is a second filtration $\mathcal{G}$ by finite dimensional $k$-subspaces of $\mathcal{D}$ such that 
$\gr_{\mathcal{F}}(\mathcal{D})$ is also a commutative and  affine ring.  The result now follows from \cite[Theorem~0.4]{ASZ}.
\end{proof}

\begin{theorem}\label{plusthm}
\begin{enumerate}
\item[(i)] $\mathfrak{m}_+\mathcal{D}$ is a completely prime, primitive ideal of $\mathcal{D}$.
\item[(ii)] The localisation of $\mathcal{D}/\mathfrak{m}_+\mathcal{D}$ at the powers of $x$ or the powers of $u$ is a localisation of the Weyl algebra $A_2 (k)$ at powers of a generator.
\item[(iii)] $\mathfrak{m}_+\mathcal{D}$ is generated by a regular central sequence of length 2.
\item[(iv)] $\mathcal{D}/\mathfrak{m}_+\mathcal{D}$ is Auslander  Gorenstein and  CM with  
$$\GKdim (\mathcal{D}/\mathfrak{m}_+\mathcal{D}) = 4 = \mathrm{injdim}(\mathcal{D}/\mathfrak{m}_+\mathcal{D}).$$
\item[(v)] Every prime  ideal $P$ of $\mathcal{D}$ which strictly contains $\mathfrak{m}_+\mathcal{D}$ satisfies 
$$ \mathcal{O}(G)^+\mathcal{D} \; \subseteq \; P,$$
so the space of such primes $P$ is homeomorphic to $\mathrm{Spec}(U(\mathfrak{sl}(2,k))$.
\end{enumerate}
\end{theorem}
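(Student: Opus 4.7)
The plan is to prove the five parts in the order (iii)--(iv), then the complete-primality half of (i), then (ii), and finally (v) together with the primitivity half of (i). The engine is an upper bound $\GKdim(\mathcal{D}/\mathfrak{m}_+\mathcal{D}) \leq 4$ from the filtration $\mathcal{F}$ of Proposition~\ref{homprop}: the central element $z - 16$ lies in $\mathfrak{m}_+\mathcal{D}$ via $z\theta = \omega^2$, and the $\sigma$-normal element $q + s = \tfrac{1}{16}[(\omega+16)s - q(\theta-16)]$ arises from the identity $\omega s = q\theta$. Their initial forms in $\gr_\mathcal{F}\mathcal{D}$ are $q^2 - 16g$ (in degree $0$) and the linear form $\bar s = x\bar v + u\bar y + (-\tfrac{1}{2}ux + g - 1)\bar\zeta$ (in degree $1$); a short commutative check shows $\gr_\mathcal{F}\mathcal{D}/(\bar s,\, q^2 - 16g)$ has Krull dimension $4$, so the Zariskian property of $\mathcal{F}$ delivers the asserted bound.

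For (iii), $\omega + 16$ is central regular in the domain $\mathcal{D}$, so by Proposition~\ref{homprop}(i) and \cite[Theorem~7.2(b)]{GL} the quotient $R := \mathcal{D}/(\omega+16)\mathcal{D}$ is Auslander-CM of GK-dimension~$5$, hence GK-homogeneous by \cite[Remark~2.4]{Lev}. If $\theta - 16$ were a zero-divisor in $R$, its annihilator would be a nonzero two-sided ideal (by centrality) of GK-dimension $5$ by homogeneity, yet annihilated by $\theta - 16$ and so a $\mathcal{D}/\mathfrak{m}_+\mathcal{D}$-module of GK-dimension at most $4$: contradiction. Thus $\{\omega + 16, \theta - 16\}$ is a regular central sequence, and two applications of \cite[Theorem~7.2(b)]{GL} yield (iv): $\mathcal{D}/\mathfrak{m}_+\mathcal{D}$ is Auslander-Gorenstein (via \cite[\S 3.4, Remark (3)]{Lev}) and CM of GK-dimension $4$. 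Lemma~\ref{trivial}, combined with the surjection onto $U(\mathfrak{sl}(2,k))$ from Theorem~\ref{before}(ii) supplying finite-dimensional representations, then gives $\mathrm{injdim}(\mathcal{D}/\mathfrak{m}_+\mathcal{D}) = 4$.

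To prove complete primality in (i), I would show that $A = \{q^i\} \cup \{x^j\}$ acts by regular elements on $\mathcal{D}/\mathfrak{m}_+\mathcal{D}$. The element $q$ becomes a unit because $z - 16 \in \mathfrak{m}_+$ forces $q^2 = 16g$; for $x$, let $K = \ker(x^{n_0}\cdot) \subseteq \mathcal{D}/\mathfrak{m}_+\mathcal{D}$ be the stable kernel of left multiplication by powers of $x$ (well-defined by Noetherianity). An ad-nilpotence computation using Lemma~\ref{adnilp} and the identity $x^n r = \sum_j \binom{n}{j}(\mathrm{ad}\, x)^j(r)\, x^{n-j}$ shows $K$ is a two-sided ideal. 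Since $x^{n_0}$ annihilates $K$ on the left, Lemma~\ref{small}(ii) forces $\GKdim(K) \leq 3$; but the GK-homogeneity from (iv) forces $\GKdim(K) = 4$ as soon as $K \neq 0$. Hence $K = 0$ and $x$ is left-regular (right-regular by symmetry). Therefore $\mathcal{D}/\mathfrak{m}_+\mathcal{D}$ embeds into $\mathcal{D}_{(A)}/\mathfrak{m}_+\mathcal{D}_{(A)}$, which Theorem~\ref{Weyls} identifies with $A_2^{(A)}(k)$ since $\mathfrak{m}_+$ cuts out the maximal ideal $(z - 16, \omega + 16)$ of $S_{(A)} = k[z^{\pm 1}, \omega]$. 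Being a subring of a simple domain, $\mathcal{D}/\mathfrak{m}_+\mathcal{D}$ is a domain, and (ii) follows at once, since localizing at powers of $x$ or $u$ coincides with localization at $A$ or $C$.

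For (v), let $P \supsetneq \mathfrak{m}_+\mathcal{D}$ be prime. If $P \cap A = \emptyset$, then $P\mathcal{D}_{(A)}$ is a proper prime of $\mathcal{D}_{(A)}$ containing the maximal ideal $\mathfrak{m}_+\mathcal{D}_{(A)}$, hence equals it; the Ore correspondence (available because $\mathfrak{m}_+\mathcal{D}$ is $A$-saturated, thanks to the previous paragraph) yields $P = \mathfrak{m}_+\mathcal{D}$, contradicting strictness. So $P \cap A \neq \emptyset$: the case $q \in P$ gives $z \in P$ and hence $16 \in P$, absurd, so $x \in P$, at which point Lemma~\ref{small}(i) plus primality of $P$ delivers $\mathcal{O}(G)^+\mathcal{D} \subseteq P$; the asserted homeomorphism with $\mathrm{Spec}(U(\mathfrak{sl}(2,k)))$ is then standard via Theorem~\ref{before}(ii). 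Statement (v) in turn makes $\mathfrak{m}_+\mathcal{D}$ locally closed in $\mathrm{Spec}(\mathcal{D})$ (the containment $\mathfrak{m}_+\mathcal{D} \subsetneq \mathcal{O}(G)^+\mathcal{D}$ is witnessed by $x$), and by the Jacobson property from Theorem~\ref{Nullstel} locally closed primes are primitive, finishing (i). The main obstacle will be this complete-primality step: without the $\mathfrak{m} \neq \mathfrak{m}_+$ device of Theorem~\ref{genpriv}(i), one must marshal ad-nilpotence, Noetherianity, and GK-homogeneity from (iv) to force the stable $x$-kernel $K$ to vanish, a substantially more delicate manoeuvre than in the generic case.
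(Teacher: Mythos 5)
Your proposal is correct, and its overall architecture coincides with the paper's: localise at the Ore set $A$, identify $\mathcal{D}_{(A)}/\mathfrak{m}_+\mathcal{D}_{(A)}$ with $A_2^{(A)}(k)$ via Theorem~\ref{Weyls}, use Lemma~\ref{small} together with GK-homogeneity to kill the $x$-torsion, prove (v) by the same unit-plus-Lemma~\ref{small} argument, and deduce primitivity from local closedness and Theorem~\ref{Nullstel}. You diverge in two sub-steps. First, the paper obtains $\GKdim(\mathcal{D}/\mathfrak{m}_+\mathcal{D})=4$ by a sandwich: it sets $P_+=\mathfrak{m}_+\mathcal{D}_{(A)}\cap\mathcal{D}$, bounds $\GKdim(P_+/\mathfrak{m}_+\mathcal{D})\leq 3$ by applying Lemma~\ref{small} to the annihilator of $P_+/\mathfrak{m}_+\mathcal{D}$, notes $\GKdim(\mathcal{D}/P_+)=4$, and invokes exactness of GK dimension (Proposition~\ref{homprop}(iv)); you instead extract the upper bound $\leq 4$ from $\gr_{\mathcal{F}}\mathcal{D}$ using the initial forms of $z-16$ and $q+s$. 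Your route is valid, but the ``short commutative check'' should be spelled out: one needs that no irreducible component of the hypersurface $V(q^2-16g)$ (each of dimension $5$) lies inside the dimension-$3$ locus $V(x,u,g-1)$ where all coefficients of the linear form $\bar s$ vanish, so that $\bar s$ genuinely cuts the dimension to $4$. Second, for the regular sequence the paper computes the grade $j_{\mathcal{D}}(\mathcal{D}/\mathfrak{m}_+\mathcal{D})=2$ and cites \cite[Proposition~3.6]{B}, whereas you run the zero-divisor/GK-homogeneity argument that the paper itself uses for the generic maximal ideals in Theorem~\ref{genpriv}(iv); both work, and your version has the mild advantage of needing only the upper bound on GK dimension, with the equality then falling out of \cite[Theorem~7.2(b)]{GL}. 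Your complete-primality step (vanishing of the stable $x$-kernel $K$) is the paper's $P_+=\mathfrak{m}_+\mathcal{D}$ argument in different clothing, since $P_+/\mathfrak{m}_+\mathcal{D}$ is precisely the $A$-torsion of $\mathcal{D}/\mathfrak{m}_+\mathcal{D}$.
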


\begin{proof} Recall that $q_A=q$. Since $q^2 \equiv 16g \not\equiv 0 \,\textit{mod}(\mathfrak{m}_+\mathcal{D}),$ Theorem~\ref{Weyls} implies that $\mathfrak{m}_+\mathcal{D}_{(A)}$ is a maximal ideal of $\mathcal{D}_{(A)}$, with 
$\mathcal{D}/\mathfrak{m}_+\mathcal{D}_{(A)}\cong A^{(A)}_2(k)$. Therefore, defining 
$P_+ := \mathfrak{m}_+\mathcal{D}_{(A)} \cap \mathcal{D}$, we deduce that $P_+$ is a completely prime ideal of $\mathcal{D}$ with 
\begin{equation}\label{hurt} \mathfrak{m}_+\mathcal{D} \; \subseteq \; P_+.
\end{equation}
We will eventually show that (\ref{hurt}) is an equality. 

As in the proofs of Theorem~\ref{genpriv}(i),(ii), let $I$ be the right annihilator in $\mathcal{D}$ of $P_+/\mathfrak{m}_+\mathcal{D}$. Then $I$ contains $\mathfrak{m}_+\mathcal{D}$ and a power of $x$, and hence, by Lemma~\ref{small}, 
\begin{equation}\label{trap} (\mathcal{O}(G)^+\mathcal{D})^r \; \subseteq \; I \qquad \text{for some $r \in \mathbb{Z}_{\geq 1}$},
\end{equation}
In particular, $\GKdim(\mathcal{D}/I) \leq 3$ by Lemma~\ref{small}(ii). Therefore, by \cite[Proposition~5.1(d)]{KL}
\begin{equation}\label{small2} \GKdim(P_+/\mathfrak{m}_+\mathcal{D}) \leq 3.
\end{equation}
Recall that $\GKdim(A^{(A)}_2(k)) = 4$ by \cite[Example~7.3 and Theorem~4.9]{KL}, so that 
\begin{equation}\label{trend} \GKdim(\mathcal{D}/P_+) \; = \; 4
\end{equation}
by \cite[Theorem~4.9]{KL}. Thus, from (\ref{small2}), (\ref{trend}) and Proposition~\ref{homprop}(iv) it follows that
\begin{equation}\label{corner} \GKdim(\mathcal{D}/\mathfrak{m}_+\mathcal{D}) \; = \; 4.
\end{equation}

By Proposition~\ref{homprop},  $\mathcal{D}$ is CM  and Auslander regular, with 
$ \mathrm{gldim}(\mathcal{D}) = 6= \GKdim(\mathcal{D}).$ 
It therefore follows from the CM property of $\mathcal{D}$
together with (\ref{corner}) 
 that
\begin{equation}\label{canon} j_{\mathcal{D}}(\mathcal{D}/\mathfrak{m}_+\mathcal{D}) \; = \; 6-4 \; = \; 2.
\end{equation}
From (\ref{canon}) and \cite[Proposition~3.6]{B} we deduce that the maximum length of a regular sequence of elements of $\mathfrak{m}_+$ on $\mathcal{D}$ is precisely 2; in particular any choice of a generating pair of elements of $\mathfrak{m}_+$, for example, $\{z-16, \, \omega + 16\}$, is a regular sequence on $\mathcal{D}$. Therefore, by two applications of \cite[Theorem~7.2(b)]{GL}, 
\begin{equation}\label{rad}\mathcal{D}/\mathfrak{m}_+\mathcal{D} \textit{ is CM of GK-dimension~4.}
\end{equation}
Similarly, two applications of \cite[$\S$3.4, Remark~(3)]{Lev} show that $\mathcal{D}/\mathfrak{m}_+\mathcal{D}$ is Auslander Gorenstein.   By Lemma~\ref{small}(i) and Theorem~\ref{before}(ii),  
$U(\mathfrak{sl}(2,k))\cong \mathcal{D}/\mathcal{D}\mathcal{O}(G)^+$ is a factor of  $\mathcal{D}/\mathfrak{m}_+\mathcal{D}$
and so  $\mathcal{D}/\mathfrak{m}_+\mathcal{D}$ has a non-zero finite dimensional module, $M$.
Thus, by Lemma~\ref{trivial}, $\mathrm{injdim}(\mathcal{D}/\mathfrak{m}_+\mathcal{D})=4.$

By \cite[Remark~2.4]{Lev}, again, the CM property  for $\mathcal{D}/\mathfrak{m}_+\mathcal{D}$ implies that  $\mathcal{D}/\mathfrak{m}_+\mathcal{D}$ is GK-homogeneous. Therefore we may conclude from (\ref{small2}) that $P_+$ does indeed equal $\mathfrak{m}_+\mathcal{D}$. This proves (i) - (iv), with the exception of  showing that $\mathfrak{m}_+\mathcal{D}$ is primitive.

\medskip

(v) Let $Q$ be a prime ideal of $\mathcal{D}$ with $\mathfrak{m}_+\mathcal{D} \subsetneq Q$. 
 As already noted, $q$ is congruent to a unit $\textit{mod }\mathfrak{m}_+\mathcal{D}$. Then $Q\mathcal{D}_{(A)} = \mathcal{D}_{(A)}$ by $(ii)$, so  $Q$ must contain a power of $x$. Hence, by Lemma~\ref{small},  $\mathcal{O}(G)^+\mathcal{D} \subseteq Q$, as required.

\medskip

Finally, to see that $\mathfrak{m}_+\mathcal{D}$ is primitive note that (v) shows that it is locally closed.  Hence it is primitive by Theorem~\ref{Nullstel}(i).
\end{proof}

\bigskip

\subsection{Non-generic minimal primitives (II) - $\mathfrak{m}_0$}\label{zero}
In this subsection we begin our study of the ideal $\mathfrak{m}_0\mathcal{D}$.
Recall the definition of $q,s$ from \eqref{s-q-defn} and,  from Notation~\ref{maxspecZ}(iii),  that $ \mathfrak{m}_0  :=  \langle q^2g^{-1}, \, qsg^{-1}, \, s^2g^{-1} \rangle $ is the unique singular point of $\mathrm{Maxspec}(Z(\mathcal{D}))$. Clearly the right ideal
\begin{equation}\label{Pdef} P_0 \; := \; q\mathcal{D} + s\mathcal{D} 
\end{equation}
is a two-sided ideal of $\mathcal{D}$ since $q$ and $s$ are normal in $\mathcal{D}$ by Lemma~\ref{normal}. Moreover, 
$$ \mathfrak{m}_0\mathcal{D} = P_0^2 \subset P_0 \subseteq \sqrt{\mathfrak{m}_0\mathcal{D}}.$$
As part of the next proposition we see that $P_0$ is completely prime, so the second inclusion above is an equality. In fact $P_0$ is a maximal ideal, but this is more difficult to prove, and is delayed until $\S$\ref{steps}.

\begin{proposition}\label{singular}Retain the above notation, and set $T := \mathcal{D}/P_0$. 
\begin{enumerate}
\item[(i)] $T$ is a localisation of a 4-step iterated Ore extension of $k$, namely
$$ T \; = \; \big((k[u,x]\langle (ux + 2)^{-1}\rangle)[y; \partial_1]\big)[v; \sigma, \partial_2],$$
where $u$ and $x$ commute, 
$$\partial_1(u) = -\hhalf ux - 2, \quad \partial_1 (x) = -\hhalf x^2,$$
$$ \partial_2(u) = - \hhalf u^2, \quad \partial_2 (x) = \frac{3}{2}ux +2, \quad \partial_2 (y) = \frac{3}{2}uy - 2,$$
and $\sigma (y) = y +\hhalf x$, with $\sigma (x) = x$ and $\sigma (u) = u.$

\item[(ii)] $\{q,s\}$ forms a regular normal sequence of generators of $P_0$.

\item[(iii)] $ \mathrm{gldim}(T) \leq 4 = \GKdim(T).$
\item[(iv)] $T$ is CM  and is an Auslander regular domain.
\end{enumerate}
\end{proposition}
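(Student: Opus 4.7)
\medskip\noindent
\textbf{Proof plan for Proposition~\ref{singular}.}
I would prove (i) first; parts (ii)–(iv) then follow from (i) together with results already established.

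\medskip\noindent
\emph{Part (i).} The strategy is to eliminate $g$ and $\zeta$ from $\mathcal{D}$ modulo $P_0$ and identify what remains. Since $q = ux + 2(1+g) \equiv 0$ in $T$, one gets $g \equiv -\tfrac{1}{2}(ux+2)$ in $T$; because $g$ is a unit in $\mathcal{D}$, this forces $ux+2$ to be invertible in $T$. Substituting this expression for $g$ into \eqref{s-q-defn} and solving the congruence $s \equiv 0$ for $\zeta$ yields $\zeta \equiv -(ux+2)^{-1}(xv + uy + ux)$ in $T$. With $g$ and $\zeta$ now expressed in the remaining generators, I would compute the brackets $[y,u], [y,x], [v,u], [v,x], [v,y]$ directly from the defining relations of $\mathcal{D}$, substituting the expressions above, and observe that the results exactly match the commutation formula \eqref{Ore-eq} for the $\partial_1, \partial_2, \sigma$ given in the statement. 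Next I would build the ring
$$R := \bigl((k[u,x][(ux+2)^{-1}])[y;\partial_1]\bigr)[v;\sigma,\partial_2],$$
first verifying it is a well-defined iterated Ore extension: $\partial_1$ extends to the localisation because $\partial_1(ux+2) = -x(ux+2)$ is a multiple of $ux+2$, and $\sigma,\partial_2$ satisfy the required compatibilities by direct computation. The universal property of Ore extensions then provides a $k$-algebra map $\varphi:R \to T$ sending the generators to themselves. Surjectivity is clear from the first two displays above. For injectivity, I would combine the PBW basis $\{g^a x^b u^c y^d \zeta^e v^f\}$ of $\mathcal{D}$ from Proposition~\ref{prop-ore} with the rewriting rules for $g$ and $\zeta$ to show that $T$ has a basis matching the standard basis of $R$.

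\medskip\noindent
\emph{Part (ii).} The elements $q$ and $s$ are $\sigma$-normal in $\mathcal{D}$ by Lemma~\ref{normal}(i), and $q$ is regular because $\mathcal{D}$ is a domain. Iterated Ore extensions of commutative domains are domains, so $R$ (and hence $T$) is a domain; therefore $P_0$ is completely prime. Since $R$ is obtained from $k[u,x]$ of GK-dimension~$2$ by a localisation (GK-dimension preserving) and two Ore extensions, $\GKdim(T) = 4$. As $\mathcal{D}$ is CM of GK-dimension~$6$ by Proposition~\ref{homprop}, we obtain $j_{\mathcal{D}}(T) = 2$. Now I would repeat the argument from the proof of Theorem~\ref{plusthm}: \cite[Proposition~3.6]{B} together with the GK-homogeneity furnished by the CM property forces any pair of normal generators of $P_0$ whose grade equals~$2$ to be a regular sequence; $\{q,s\}$ is such a pair.

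\medskip\noindent
\emph{Parts (iii) and (iv).} From (i) and the fact that Ore extensions raise global dimension by at most one \cite[Theorem~7.5.3]{McCR}, the bound $\mathrm{gldim}(T) \le 4$ is immediate, and $\GKdim(T) = 4$ has already been noted. For (iv), applying \cite[Theorem~7.2(b)]{GL} twice (using the regular normal sequence $\{q,s\}$ of (ii) together with Proposition~\ref{homprop}) propagates the CM property to $T$, and two applications of \cite[\S 3.4, Remark~(3)]{Lev} give the Auslander Gorenstein property. Combined with (iii) this upgrades to Auslander regular.

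\medskip\noindent
\emph{Main obstacle.} The bulk of the work is concentrated in Step~3 of (i): verifying the internal consistency of the Ore-extension data (in particular that $\sigma$ extends to an automorphism of $k[u,x][(ux+2)^{-1}][y;\partial_1]$ and that $\partial_2$ is a $\sigma$-derivation there), and using the rewriting rules for $g,\zeta$ to match PBW bases across $\varphi$. Once (i) is secured, (ii)–(iv) follow along lines already rehearsed for Theorem~\ref{plusthm}.
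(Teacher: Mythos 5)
Your overall strategy for (i) --- eliminate $g$ and $\zeta$ modulo $P_0$, read off the relations among $u,x,y,v$, and produce a surjection from the iterated Ore extension $\widehat{T}:=\big((k[u,x]\langle(ux+2)^{-1}\rangle)[y;\partial_1]\big)[v;\sigma,\partial_2]$ onto $T$ --- is exactly the paper's, and your treatment of (iii) and (iv) (Ore extensions raise global dimension by at most one; two applications of \cite[Theorem~7.2]{GL} and \cite[\S 3.4, Remark~(3)]{Lev}) also matches. The genuine gap is your injectivity step. Rewriting $g$ and $\zeta$ via the PBW basis of Proposition~\ref{prop-ore} shows only that the reduced monomials \emph{span} $T$; it does not show they are linearly independent, i.e.\ that the two-sided ideal $P_0=q\mathcal{D}+s\mathcal{D}$ (generated by two non-central normal elements) is no larger than the kernel you expect. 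Making "match PBW bases across $\varphi$" precise would require something like an explicit vector-space complement to $P_0$ in $\mathcal{D}$ or a diamond-lemma analysis of the presentation of $T$, neither of which you sketch, and this is precisely the hard point you defer to "Step~3". The paper avoids it entirely: from the surjection $\widehat{T}\twoheadrightarrow T$ one gets $\GKdim(T)\le 4$; since $\mathcal{D}/q\mathcal{D}$ is CM of GK-dimension $5$ and hence GK-homogeneous, the annihilator of $s$ in $\mathcal{D}/q\mathcal{D}$ is a $T$-module of GK-dimension $\le 4$ and so vanishes, giving $\GKdim(T)=5-1=4$ exactly; and then \cite[Proposition~3.15]{KL} (a proper quotient of a domain drops GK-dimension) forces the surjection from the domain $\widehat{T}$ to be an isomorphism. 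This one argument simultaneously delivers (i), the regularity of $s$ modulo $q\mathcal{D}$ needed for (ii), and the CM property of $T$.

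Two secondary points. First, your route to (ii) invokes \cite[Proposition~3.6]{B} as in the proof of Theorem~\ref{plusthm}, but there it is applied to a \emph{central} generating pair of $\mathfrak{m}_+\mathcal{D}$; here $q$ and $s$ are only $\sigma$-normal (Lemma~\ref{normal}) and they generate $P_0$, not $\mathfrak{m}_0\mathcal{D}$, so you would need to justify the extension of that grade argument to normalizing sequences --- the paper's GK-homogeneity argument sidesteps this. Second, your formula for $\zeta$ has a sign slip: from $g\equiv-\hhalf(ux+2)$ one gets $\zeta\equiv-\hhalf g^{-1}(ux+xv+uy)\equiv +(ux+2)^{-1}(ux+xv+uy)$ modulo $P_0$, not the negative of this; harmless for the structure of the argument, but worth correcting before computing the brackets.
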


\begin{proof}Throughout the proof we abuse notation by simply denoting the image in $T$ of an element $\omega$ of $\mathcal{D}$ by $\omega$ when no confusion seems likely. 

(i),(ii) Since $q := ux + 2(1 +g)$ and $q \equiv 0\,\textit{mod}(P_0)$, we can write 
\begin{equation}\label{up} g \equiv - \hhalf(ux + 2)\,\textit{mod}(P_0),
\end{equation}
so that 
\begin{equation}\label{gee} ux+ 2 \textit{ is a unit in }T.
\end{equation}
Using (\ref{up}) we find that, $\textit{mod}(P_0)$,
$$ s \; := \;  xv + uy + (- \hhalf ux + g - 1)\zeta - 2(g + 1) \;\equiv  \; xv + uy + 2g\zeta - 2g - 2,$$ 
so that,  since $s\in P_0$,
\begin{equation}\label{greek} \zeta \; \equiv \; - \hhalf g^{-1}(ux + xv + uy)\quad mod(P_0)
\end{equation}
It follows from (\ref{up}), (\ref{gee}) and (\ref{greek}) that
\begin{equation}\label{gener}
T \; = \; k\langle u,x, (ux+ 2)^{-1}, y,v \rangle. 
\end{equation}
The relations for $\mathcal{D}$ given in $\S$\ref{notation} immediately imply the following relations for  the generators for $T$ listed  in (\ref{gener})  
\begin{align*}\label{Trels} [u,x] &= 0, \qquad &[y,x] = - \hhalf x^2, \qquad [v,x] &= {\textstyle\frac{3}{2}}ux + 2,\\
[y,u] &= - \hhalf ux - 2, \qquad &[v,u] = -\hhalf u^2, \qquad [v,y] &= \textstyle{\frac{3}{2}}uy + \hhalf xv - 2.
\end{align*}
Clearly the iterated Ore extension of $k[u,x]\langle(ux + 2)^{-1}\rangle$ defined in (i), which we temporarily label $\widehat{T}$, satisfies precisely these relations, so there is an algebra epimorphism $\Phi$ from $\widehat{T}$ onto $T$.  

We next show that  $\Phi$ is an isomorphism, which we do by computing $\GKdim(T)$. First note that $\GKdim(\widehat{T}) = 4$ by \cite[Theorem~12.3.1]{KL}, since it is a PBW extension in 2 variables of $k[u,x]\langle(ux + 2)^{-1}\rangle$.  Thus,  certainly
$\GKdim(T) \leq 4.$ On the other hand $\mathcal{D}$ is CM of GK-dimension 6 by Proposition~\ref{homprop}($i,iii$). Hence, because $q$ is a regular normal element of $\mathcal{D}$ by Lemma~\ref{normal}, $\mathcal{D}/q\mathcal{D}$ is CM of GK-dimension 5 by \cite[Theorem~7.2(b) and its proof]{GL}. Moreover $\mathcal{D}/q\mathcal{D}$ is GK-homogeneous by \cite[Remark~2.4]{Lev}. Since $\GKdim(T) \leq 4$, this ensures that 
\begin{equation}\label{sequence} s \textit{  cannot be a zero-divisor mod}\,\, q\mathcal{D}.
\end{equation}
Since $P_0 \; := \; q\mathcal{D} + s\mathcal{D}$, 
 a second application of \cite[Theorem~7.2(b) and its  proof]{GL} yields 
$ \GKdim(T)  = 4$ and also shows that 
\begin{equation}\label{four} T \textit{ is CM.}
\end{equation}  Since $\widehat{T}$ is a domain, the equality $\GKdim(\widehat{T}) = 4 =\GKdim(T)$, combined with \cite[Proposition~3.15]{KL}, shows that  $ \widehat{T} =T$.  Thus (i) is proved, with (ii) also following thanks to (\ref{sequence}).

\medskip

\noindent (iii)  
By (i),  $T$ is a 2-step iterated Ore extension of $k[u.x]\langle(ux + 2)^{-1}\rangle$,  and so  two applications of \cite[Theorem~7.5.3(i)]{McCR} gives   $\mathrm{gldim}(T) \leq 4$.

\medskip

\noindent (iv) That $T$ is a domain is clear from (i), while the CM property was proved in \eqref{four}.
 The Auslander Gorenstein property   holds for $\mathcal{D}$ by Proposition~\ref{homprop}(i). Thus,  by  (ii) and  two applications of \cite[Theorem~7.2(a)]{GL},  $T$ is also  Auslander Gorenstein and it is then Auslander regular by (iii). \end{proof}

We remark that, by Lemma~\ref{trivial} and Theorem~\ref{before}(iii)  it follows that $\mathrm{gldim}(T)<4$. We do not know 
the exact value of $\mathrm{gldim}(T)$.  
\bigskip

\subsection{Maximality of $P_0$}\label{steps}

Let $T := \mathcal{D}/P_0$ as in Proposition 5.3. Define also the following subalgebras of $T$:
$$ R \; := k\langle u,x, (ux +2)^{-1}\rangle, \textit{  and  } S \; := \; R[y;  \partial_1],$$
so that $T = S[v;\sigma,  \partial_2]$.   It is important to note that, by the formul\ae\ in Proposition~\ref{singular},  $R$ is preserved by the $\sigma$-derivation $\partial_2$. Moreover, since $\sigma_{|R}$ is the identity, $\partial_2$ actually restricts to a derivation on $R$.

 It is much easier to determine when an Ore extension is simple if the ring is a differential operator ring, in the sense that the  defining automorphism is actually the identity. Thus we will reduce to that case. The idea follows from Lemma~\ref{normal} which shows that $\sigma^2$ is  given by the inner automorphism $\tau_g$ in the sense that $\sigma(s)=\tau_g(s)=gsg^{-1}$ for suitable $g\in S$. We will therefore extend $R, S$ and $ T$ by   $\sqrt{g}$ and show that $\sigma$ is then inner, and so can be removed. The details are given in the next few results, culminating in Proposition~\ref{prop-diffop}.
 
 \begin{notation}\label{h-notation} In the algebraic closure of $R$, set $h=(ux+2)^{-\hhalf}$. Write
  $\Rtilde=R\langle h\rangle=k\langle u,x,h,h^{-1}\rangle$. We extend the $\partial_i$ to derivations on $\Rtilde$ by the usual rules for fractional powers:
  $$\partial(h)= (-\half)(ux+2)^{-1}h\partial(ux+2),$$
  for $\partial= \partial_1,\partial_2$. Set $\Stilde = \Rtilde[y; \partial_1]$. Finally, we can extend $\sigma$ to $\Rtilde$ and $\Stilde $ by setting $\sigma(h)=h$. Then both $\sigma$ and $\partial_2$ are naturally defined on $\Stilde$ as an automorphism, respectively $\sigma$-derivation and so $\Ttilde=\Stilde[v;\sigma,\partial_2]$ is a well-defined Ore extension of $\Stilde$.
 
 \end{notation}
 
 The following observation will prove useful.
 
 \begin{lemma}\label{freedom} $\Stilde$ is a free left and right $S$ module on basis $\{1,h\}$. Similarly, $\Ttilde$ is a free left and right $T$ module on basis $\{1,h\}$.
 \end{lemma}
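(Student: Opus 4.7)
The plan is to propagate freeness up the two Ore-extension towers, starting from the base statement that $\widetilde{R}$ is $R$-free of rank $2$ on $\{1,h\}$, and then deriving the $\widetilde{S}/S$ and $\widetilde{T}/T$ cases by the PBW-basis theorem for Ore extensions.

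First, since $u$ and $x$ commute in $R$, the ring $R=k[u,x][(ux+2)^{-1}]$ is commutative, and $\widetilde{R}=R[h]$ is obtained by adjoining a square root of the unit $(ux+2)^{-1}\in R$. Hence $\widetilde{R}\cong R[T]/(T^{2}-(ux+2)^{-1})$ is free of rank $2$ as an $R$-module on $\{1,h\}$, and, being commutative, the same basis serves on either side.

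Second, I would pass from $R\subset\widetilde{R}$ to $S=R[y;\partial_{1}]\subset\widetilde{S}=\widetilde{R}[y;\partial_{1}]$ using the PBW freeness of the Ore extension: $\widetilde{S}$ is free as a left $\widetilde{R}$-module on $\{y^{i}:i\ge 0\}$, so the decomposition $\widetilde{R}=R\oplus Rh$ gives
\[
\widetilde{S}\;=\;\bigoplus_{i\ge 0}Ry^{i}\,\oplus\,\bigoplus_{i\ge 0}Rhy^{i}\;=\;S\,\oplus\,\bigoplus_{i\ge 0}Rhy^{i}
\]
as a left $R$-module. To identify the second summand with the left $S$-module $Sh$, I would compute $\partial_{1}(h)$ from the prescription in Notation~\ref{h-notation}: since $\partial_{1}(ux+2)=-x(ux+2)$, one obtains $\partial_{1}(h)=\tfrac{1}{2}xh$. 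Consequently $yh=hy+\tfrac{1}{2}xh=h(y+\tfrac{1}{2}x)$ (using commutativity of $\widetilde{R}$), whence inductively $y^{i}h=h(y+\tfrac{1}{2}x)^{i}$ and $hy^{i}=(y-\tfrac{1}{2}x)^{i}h$. These relations simultaneously show that $hS=Sh$ and that the submodule $\bigoplus_{i}Rhy^{i}$ equals $hS=Sh$, so $\widetilde{S}=S\oplus Sh$ as a left $S$-module. The right $S$-module statement is obtained symmetrically, using right freeness of $\widetilde{S}$ over $\widetilde{R}$ together with $hS=Sh$.

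Third, I would repeat the argument one level up for $\widetilde{T}=\widetilde{S}[v;\sigma,\partial_{2}]$ over $T=S[v;\sigma,\partial_{2}]$. First one checks that $\sigma$ and $\partial_{2}$ extend consistently to $\widetilde{S}$: $\sigma(h)=h$ is forced (and compatible with $\sigma$ fixing $(ux+2)^{-1}$), and $\partial_{2}(h)=-\tfrac{1}{2}uh$ is obtained from $\partial_{2}(ux+2)=u(ux+2)$ by the same Leibniz trick. Freeness of $\widetilde{T}$ over $\widetilde{S}$ on $\{v^{i}\}$, combined with the resulting commutation rules $hv=(v+\tfrac{1}{2}u)h$ and $vh=h(v-\tfrac{1}{2}u)$, then yields $\widetilde{T}=T\oplus Th=T\oplus hT$ as both a left and a right $T$-module.

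The only piece of real care (rather than an obstacle) is the consistency check for the extensions of $\sigma,\partial_{1},\partial_{2}$ to $\widetilde{R}$: one must verify that the formulas $\partial_{j}(h)=-\tfrac{1}{2}(ux+2)^{-1}h\,\partial_{j}(ux+2)$ and $\sigma(h)=h$ respect the defining relation $h^{2}(ux+2)=1$, which in each case is a one-line Leibniz calculation. Given this, the rest of the proof is formal propagation of PBW freeness through the two Ore-extension steps.
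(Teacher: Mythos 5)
Your proposal is correct and follows essentially the same route as the paper: establish that $\Rtilde = R \oplus Rh$ from $h^2 \in R$, then use PBW freeness of the Ore extensions over $\{y^i\}$ (resp.\ $\{v^i\}$) and the commutation of $h$ past $y$ (resp.\ $v$) to collect terms into $S \oplus Sh$ (resp.\ $T \oplus Th$). You merely make explicit the computations ($\partial_1(h)=\hhalf xh$, $yh=h(y+\hhalf x)$, etc.) that the paper compresses into the phrase ``collecting terms,'' and these computations check out.
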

 
 \begin{proof} As $h^2=(ux+2)^{-1}\in R$, the construction of $\Rtilde$ ensures that $\Rtilde$  is a free left and right $R$-module  on basis $\{1,h\}$.
 We can then write 
 $$\Stilde= \bigoplus_{i=0}^\infty \Rtilde y^i = \bigoplus Ry^i \oplus\bigoplus Rh y^i=\bigoplus Ry^i \oplus \bigoplus R y^i h.$$
 Collecting terms shows that $\Stilde = S\oplus Sh$. As $S$ is a domain this is necessarily a direct sum of free modules. The same argument works for $\Ttilde$.
 \end{proof}

\begin{lemma}\label{lemma-inner} On $\Stilde$, $\sigma$ is the inner automorphism $\tau_{h^{-1}}$; thus   $\sigma(f)=h^{-1}fh$  for $f\in \Stilde$.
\end{lemma}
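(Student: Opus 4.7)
The plan is to verify the asserted identity $\sigma(f) = h^{-1} f h$ on a generating set of $\widetilde{S}$ and then extend by multiplicativity. Since both sides are $k$-algebra endomorphisms of $\widetilde{S}$ (the right-hand side is conjugation by the unit $h^{-1}$, the left-hand side is an algebra automorphism by construction), it suffices to check agreement on the generators $u, x, h^{\pm 1}, y$.

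For the generators of $\widetilde{R}$, namely $u, x, h^{\pm 1}$, conjugation by $h^{-1}$ acts as the identity because $\widetilde{R}$ is commutative (recall from Proposition~\ref{singular}(i) that $u$ and $x$ commute in $T$, and $h$ is just a square root of the central element $(ux+2)^{-1}$ of $R$). This matches $\sigma$, which by definition fixes $u, x$, and was extended to fix $h$ in Notation~\ref{h-notation}.

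The only nontrivial case is $f = y$. In the Ore extension $\widetilde{S} = \widetilde{R}[y;\partial_1]$ we have $y h = h y + \partial_1(h)$, so
\[
h^{-1} y h \; = \; y + h^{-1}\partial_1(h).
\]
Using the definition $\partial_1(h) = -\tfrac{1}{2}(ux+2)^{-1} h\, \partial_1(ux+2)$ from Notation~\ref{h-notation} together with the formulae $\partial_1(u) = -\tfrac{1}{2}ux - 2$ and $\partial_1(x) = -\tfrac{1}{2}x^2$ from Proposition~\ref{singular}(i), one computes
\[
\partial_1(ux+2) \; = \; \bigl(-\tfrac{1}{2}ux - 2\bigr)x + u\bigl(-\tfrac{1}{2}x^2\bigr) \; = \; -x(ux+2),
\]
and hence $\partial_1(h) = \tfrac{1}{2}hx$. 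Since $h^{-1}, h, x$ pairwise commute in $\widetilde{R}$, this gives $h^{-1}\partial_1(h) = \tfrac{1}{2}x$, and therefore $h^{-1} y h = y + \tfrac{1}{2}x = \sigma(y)$.

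The argument is essentially a direct calculation, so no serious obstacle is expected; the only point requiring care is ensuring that the extension of $\partial_1$ to $\widetilde{R}$ via the fractional-power rule in Notation~\ref{h-notation} is compatible with the Ore relation $yh - hy = \partial_1(h)$ inside $\widetilde{S}$. That compatibility is immediate because the rule in Notation~\ref{h-notation} is the unique derivation extension forced by the Leibniz rule applied to $h^2 = (ux+2)^{-1}$, so the calculation of $\partial_1(h)$ above is unambiguous.
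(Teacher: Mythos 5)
Your proposal is correct and follows essentially the same route as the paper: reduce to checking the generators (trivial on the commutative ring $\Rtilde$, which $\sigma$ fixes), then compute $h^{-1}yh = y + h^{-1}\partial_1(h) = y + \hhalf x = \sigma(y)$ using $\partial_1(ux+2) = -x(ux+2)$. The only cosmetic difference is that you isolate $\partial_1(h) = \hhalf hx$ first, whereas the paper works directly with the fractional powers $(ux+2)^{\pm\half}$; the content is identical.
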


\begin{proof} Since $\Rtilde$ is a commutative ring on which $\sigma$ is the identity, the lemma holds trivially on $\Rtilde$. It therefore just remains to check that the automorphisms agree on $y$. To prove this, we  rewrite $h^{-1}yh$  as follows.

\[\begin{aligned}
h^{-1}yh \ = & \ \   (ux+2)^{\half} y (ux+2)^{-\half} \\
= & \ \  (ux+2)^{\half} (ux+2)^{-\half}y \ + \  (ux+2)^{\half}\cdot\partial_1\big( (ux+2)^{-\half}\big)\\
= & \ \  y \ +\ (ux+2)^{\half} (-\hhalf )(ux+2)^{-\frac{3}{2}}\cdot\partial_1( (ux+2))\\
= & \ \  y \ -\ \hhalf(ux+2)^{-1} \Big( (-\hhalf ux-2) x - u(\hhalf x^2) \Big)\\
= & \ \  y \ -\ \hhalf(ux+2)^{-1} \bigl(-(ux+2)x\bigr)\\
= & \ \  y \ + \hhalf x \ = \ \sigma(y);
\end{aligned}\]
as required.
\end{proof}

\begin{proposition}\label{prop-diffop} Set $\alpha=hv$. Then $\Ttilde$ is the Ore extension $\Ttilde=\Stilde[\alpha; \partialtilde]$
where $\partialtilde$ is the derivation of $\Stilde$ defined by 
$\partialtilde(s)=h\partial_2(s)$ for $s\in \Stilde$; thus 
\[\partialtilde(u)=-\hhalf hu^2,\quad \partialtilde(x) = h({\textstyle\frac{3}{2}}ux+2) \quad\text{and}\quad \partialtilde(y) = h(({\textstyle\frac{3}{2}}uy-2).\]
As such, $\Ttilde$ is a noetherian domain.
\end{proposition}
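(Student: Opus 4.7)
The plan is to exploit Lemma~\ref{lemma-inner}, which says that $\sigma$ acts on $\Stilde$ as conjugation by $h^{-1}$. This gives the single identity $h\sigma(s) = sh$ for every $s\in\Stilde$, from which essentially everything will follow.

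First I would verify the commutation relation between $\alpha = hv$ and $\Stilde$ inside $\Ttilde$. Multiplying the defining Ore relation $vs = \sigma(s)v + \partial_2(s)$ on the left by $h$ yields
\[ \alpha s \;=\; hvs \;=\; h\sigma(s)v + h\partial_2(s) \;=\; (sh)v + h\partial_2(s) \;=\; s\alpha + \partialtilde(s). \]
Thus $\alpha$ commutes with $\Stilde$ via $[\alpha,s] = \partialtilde(s)$, which is precisely the defining relation of an Ore extension with trivial automorphism and derivation $\partialtilde$. That $\partialtilde$ really is a derivation of $\Stilde$ is automatic from the fact that $[\alpha,\cdot]$ is a derivation on the overring, or can be checked directly:
\[ \partialtilde(ab) = h\partial_2(ab) = h\sigma(a)\partial_2(b) + h\partial_2(a)b = a\cdot h\partial_2(b) + h\partial_2(a)\cdot b = a\partialtilde(b) + \partialtilde(a)b, \]
where again the identity $h\sigma(a) = ah$ is used. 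The three explicit formulas for $\partialtilde(u),\partialtilde(x),\partialtilde(y)$ then drop out by substituting the values of $\partial_2$ recorded in Proposition~\ref{singular}(i).

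Next I would show that the resulting algebra homomorphism $\Phi:\Stilde[\alpha;\partialtilde] \to \Ttilde$ sending $\alpha\mapsto hv$ is an isomorphism. Surjectivity is immediate because $h$ is a unit of $\Stilde$, so $v = h^{-1}\alpha$ lies in the image. For injectivity, a short induction using $vh - hv = \partial_2(h)\in\Stilde$ gives
\[ \alpha^i \;=\; (hv)^i \;=\; h^i v^i + (\text{terms of $v$-degree} < i). \]
Since $\Ttilde$ is free as a left $\Stilde$-module on $\{v^i\}_{i\ge 0}$ by construction and each $h^i$ is a unit of $\Stilde$, the transition from $\{\alpha^i\}$ to $\{v^i\}$ is triangular with invertible diagonal entries. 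Hence $\{\alpha^i\}_{i\ge 0}$ is also a left $\Stilde$-basis of $\Ttilde$, and $\Phi$ is an isomorphism.

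Finally, $\Ttilde$ is a noetherian domain by a double application of the standard Ore-extension results: $\Rtilde = k[u,x,h^{\pm 1}]$ is a commutative noetherian domain, so $\Stilde = \Rtilde[y;\partial_1]$ is a noetherian domain, and hence so is $\Ttilde = \Stilde[\alpha;\partialtilde]$. (Alternatively this can be deduced from Proposition~\ref{singular}(iv) together with Lemma~\ref{freedom}.) The argument has no real obstacle: all the substance is concentrated in Lemma~\ref{lemma-inner}, and after the inner-automorphism observation $h\sigma(s)=sh$ is in hand the rest is bookkeeping.
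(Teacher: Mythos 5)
Your proof is correct and follows essentially the same route as the paper: both hinge on Lemma~\ref{lemma-inner} giving $h\sigma(s)=sh$, both compute $\alpha s = s\alpha + h\partial_2(s)$, and both deduce the Ore-extension structure from the decomposition $\Ttilde=\bigoplus \Stilde\alpha^i$ (the paper invokes \cite[Theorem~1, p.438]{Cohn} where you make the triangular change of basis from $\{v^i\}$ to $\{\alpha^i\}$ explicit). Your added verifications that $\partialtilde$ is a derivation and that $\Ttilde$ is a noetherian domain are fine and merely spell out what the paper leaves implicit.
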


\begin{proof} This is a formal computation. Indeed, for $s\in \Stilde$, Lemma~\ref{lemma-inner} implies that $\sigma(s)=h^{-1}sh$. 
Equivalently,
\begin{equation}\label{commutingalpha}
\begin{aligned}
\alpha s \ = & \ hv s  = h\sigma(s)v + h\partial_2(s) \\
\ = & \ hh^{-1}shv + h\partial_2(s) = s\alpha + h\partial_2(s).
\end{aligned}\end{equation}
Therefore,  since $\Ttilde=\Stilde[v;\sigma,\partial_2]=\bigoplus \Stilde v^i$, we see that $\Ttilde = \bigoplus \Stilde\alpha^i$. Since $\Ttilde$ is a domain, combining this with \eqref{commutingalpha} and \cite[Theorem~1, p.438]{Cohn} gives the desired conclusion.
\end{proof}

Our next aim will be to show that the ring $\Ttilde$ is a simple domain, after which it is easy to prove the same conclusion for $T$. We start with some preparatory results.

\begin{lemma}\label{lemma-primality}
If there exists a non-zero $(\partial_1,\partialtilde)$-invariant ideal $I$ in $\Rtilde$, then there exists a non-zero   $(\partial_1,\partialtilde)$-invariant prime  ideal $P$ in $\Rtilde$.
\end{lemma}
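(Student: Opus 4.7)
The plan is the standard one for such derivation-invariance questions: pass first to the radical and then to a minimal prime, using that we are in characteristic zero.

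First, since $\Rtilde$ is a commutative Noetherian domain of characteristic zero, I would apply \cite[Lemma~3.20]{GW} to each of $\partial_1$ and $\partialtilde$ in turn to deduce that the nilradical $N := \sqrt{I}$ is again $(\partial_1,\partialtilde)$-invariant. One small detail needs checking here: although $\partialtilde$ is defined on $\Stilde$ in Proposition~\ref{prop-diffop}, it actually restricts to a derivation of $\Rtilde$. Indeed, $\partialtilde(u)$ and $\partialtilde(x)$ lie in $\Rtilde$ by the displayed formulae, while differentiating the identity $h^2=(ux+2)^{-1}$ (and using the product rule) gives $\partialtilde(h) = -\hhalf u h^2 \in \Rtilde$.

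Next, since $\Rtilde$ is Noetherian, I would write $N = P_1 \cap \cdots \cap P_n$ as the finite intersection of the minimal primes $P_i$ of $\Rtilde$ lying over $I$. The key step is to show that each $P_i$ is already $(\partial_1,\partialtilde)$-invariant. Fix one such $P$: both derivations extend uniquely, by the quotient rule, to the Noetherian local ring $\Rtilde_P$, and the extensions still preserve $I\Rtilde_P$. Since $P$ is minimal over $I$, the nilradical of $I\Rtilde_P$ is precisely $P\Rtilde_P$, so a second application of \cite[Lemma~3.20]{GW}, now inside $\Rtilde_P$, forces $P\Rtilde_P$ to be invariant under both derivations. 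Contracting back, $P = P\Rtilde_P \cap \Rtilde$ is the required $(\partial_1,\partialtilde)$-invariant prime, and it is non-zero since $0 \neq I \subseteq P$.

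The argument is essentially routine once one has the Seidenberg-style result \cite[Lemma~3.20]{GW} at hand; the only real content is the minor verification that $\partialtilde$ descends to $\Rtilde$ and the standard localisation argument in the second paragraph. I do not foresee a serious obstacle.
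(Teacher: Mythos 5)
Your proof is correct, but it takes a genuinely different route from the paper. The paper goes \emph{up} rather than staying inside $\Rtilde$: since $I$ is $(\partial_1,\partialtilde)$-invariant, $I\Ttilde$ is a proper non-zero ideal of the iterated Ore extension $\Ttilde=\Rtilde[y;\partial_1][\alpha;\partialtilde]$; one picks any prime $Q\supseteq I\Ttilde$ and contracts back down, using \cite[Lemmata~3.18 and~3.21]{GW} twice to see that $Q\cap\Stilde$ and then $Q\cap\Rtilde$ are invariant primes containing $I$. You instead stay in the commutative ring $\Rtilde$ and invoke the Seidenberg-type result \cite[Lemma~3.20]{GW}: the radical of a differential ideal is differential in characteristic zero, and the minimal primes over $I$ are themselves differential (your localisation argument establishes this correctly, though that statement is essentially already part of \cite[Lemma~3.20]{GW}, so the second paragraph could be compressed to a citation). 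Your observation that $\partialtilde$ restricts to a derivation of $\Rtilde$, with $\partialtilde(h)=-\hhalf uh^2$, is accurate and is indeed implicitly used throughout $\S$5.4. The paper's argument is shorter given that the Ore-extension machinery for $\Stilde$ and $\Ttilde$ is already in place and is needed again immediately afterwards; yours is more self-contained and would apply to any finite family of commuting-or-not derivations of a commutative noetherian $\mathbb{Q}$-algebra without requiring them to assemble into an iterated Ore extension. One cosmetic point common to both arguments: the hypothesis should really read ``proper non-zero'' (as it does where the lemma is applied in Proposition~\ref{prop-invariant ideals}), since otherwise $I=\Rtilde$ admits no minimal primes and no proper prime over $I\Ttilde$.
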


\begin{proof} Using \cite[Lemma~3.18(b)]{GW}  twice, clearly $I\Stilde$ is a proper non-zero   ideal of $\Stilde $ and then  $I\Ttilde$ is a proper nonzero 
ideal of $\Ttilde $. Pick a prime ideal $Q\supseteq I\Ttilde$. Then, by \cite[Lemmata~3.18 and~3.21]{GW}, twice, $Q_1=Q\cap \Stilde$
 is a $\partialtilde$-invariant prime ideal of $\Stilde$ and hence $Q_2=Q_1\cap \Rtilde $ is a $\partial_1$-invariant prime ideal of $\Rtilde$. However, since $\Rtilde$ and $Q_1$  are both  $\partialtilde$-invariant, so is $Q_2$. Thus, $P=Q_2$ is the desired prime ideal.
\end{proof}

\begin{proposition}\label{prop-invariant ideals}
There is no proper,  non-zero $(\partial_1,\partialtilde)$-invariant ideal $I$ in $\Rtilde$.
\end{proposition}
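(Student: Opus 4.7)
The plan is to argue by contradiction: assume that a proper nonzero $(\partial_1,\partialtilde)$-invariant ideal exists, and invoke Lemma~\ref{lemma-primality} to replace it by a $(\partial_1,\partialtilde)$-invariant prime $P$ of $\Rtilde$. Since $\Rtilde$ is a commutative affine $k$-algebra of Krull dimension $2$, $P$ has height $1$ or $2$. Height-$2$ primes correspond to $k$-points of $\mathrm{Spec}(\Rtilde)$, and I would rule these out directly: at such a point $(a,b,c)$ with $c^2(ab+2)=1$, $\partial_1$-invariance would force both $\partial_1(x)=-\hhalf x^2$ and $\partial_1(u)=-\hhalf ux-2$ to vanish there, demanding the incompatible conditions $b=0$ and $ab=-4$. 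I would then dispose of the two degenerate sub-cases: if $x\in P$, then $\partialtilde(x)=h(\tfrac{3}{2}ux+2)\in P$ yields $2h\in P$, contradicting the invertibility of $h$; while if $u\in P$, then $\partial_1(u)=-\hhalf ux-2\in P$ yields $-2\in P$.

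The substantive work lies in the remaining case: $P$ has height $1$ and contains neither $x$ nor $u$. Here I would pass to the residue field $\kappa(P):=\fract(\Rtilde/P)$, which has transcendence degree $2-1=1$ over the algebraically closed characteristic-zero field $k$. Consequently $\der_k(\kappa(P),\kappa(P))$ is one-dimensional over $\kappa(P)$. Since the induced derivations $\overline{\partial_1}$ and $\overline{\partialtilde}$ are both nonzero on $\Rtilde/P$ (as can be seen by evaluating them on $\bar{x}$ and $\bar{u}$ respectively), they must be linearly dependent: there exist $a,b\in\Rtilde$ with $a,b\notin P$ such that
$$b\,\partial_1(r)\;\equiv\;a\,\partialtilde(r)\pmod{P}\quad\text{for every }r\in\Rtilde.$$

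The final step is to mine this dependence for an explicit element of $P$ and then extract a contradiction. Applying the congruence to $r=u$ (and using $h^{-2}=ux+2$) gives $b\equiv ah^3u^2\pmod{P}$, while applying it to $r=h$ gives $bx\equiv -auh\pmod{P}$. Combining these and cancelling the units $a$ and $h$ modulo $P$ should produce $ux+1\in P$. The closing computation is then routine: $\partial_1(ux+1)=-x(ux+2)=-x(ux+1)-x\equiv -x\pmod{P}$, so $\partial_1$-invariance of $P$ forces $x\in P$, against our standing hypothesis. I expect the principal obstacle to be the middle step—recognising that the one-dimensionality of $\der_k(\kappa(P),\kappa(P))$ pins down the specific relation $ux+1\in P$—since once that element is in hand, every other piece of the argument is a short calculation already prefigured in the defining relations of $\Rtilde$.
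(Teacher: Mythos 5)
Your strategy is genuinely different from the paper's and is fundamentally sound, but the key computation contains an arithmetic slip that invalidates the step as written. The paper, after reducing to a prime exactly as you do via Lemma~\ref{lemma-primality}, rules out $xu+\lambda\in I$ directly, localises at $\{xu+\lambda:\lambda\in k^{*}\}$ and at $k[u]^{*}$, and then shows that a unit multiple of $\partialtilde+\mu\partial_1$ equals $\partial_x$, so that invariance under $\partial_x$ and $\partial_u$ forces $I$ to be everything. Your dual formulation --- a height-one invariant prime $P$ with $x,u\notin P$ forces the induced derivations on the transcendence-degree-one residue field $\kappa(P)$ to be proportional, and proportionality is then refuted by explicit computation --- is correct in outline, and your preliminary reductions (height-two primes, $x\in P$, $u\in P$) all check out.

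The error is in the congruence at $r=u$: since $\partial_1(u)=-\hhalf ux-2=-\hhalf(ux+4)$, you get $b(ux+4)\equiv ahu^{2}\pmod{P}$, not $b\equiv ah^{3}u^{2}$; you have silently identified $(ux+4)^{-1}$ with $h^{2}=(ux+2)^{-1}$. Consequently $ux+1\in P$ does not follow and your endgame does not launch. Fortunately the corrected congruences finish the proof even faster: your relation at $r=h$, namely $bx\equiv -ahu$, is correct, and multiplying it by $u$ and adding the corrected relation at $r=u$ gives $2b(ux+2)\equiv 0\pmod{P}$; since $ux+2=h^{-2}$ is a unit and $b\notin P$, this is the desired contradiction. (Equivalently, eliminating the proportionality constant between the relations at $r=u$ and $r=x$ yields $2(\bar u\bar x+2)^{2}=0$ in the domain $\Rtilde/P$ --- the same unit $2(ux+2)^{2}$ that appears in the coefficient $\beta$ of the paper's computation, so the two proofs ultimately rest on the same identity.) With that one line repaired, your argument is a valid alternative proof.
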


\begin{proof}  Suppose that there exists such an ideal $I$. By Lemma~\ref{lemma-primality} we can and will assume that $I$ is a prime ideal.
Suppose, first, that  $(xu+\lambda)\in I$, for some  $\lambda\in k$. Then
\[I \ \ni \ \partial_1(xu+\lambda) \ = \  (-\hhalf ux-2)x-(\hhalf x^2)u \ =\  -(ux+2)x\]
and 
\[I\ \ni \  \partialtilde(xu+\lambda) \ = \ h\bigl(-\hhalf u^2x + ({\textstyle\frac{3}{2}}ux+2)u\bigr)
 \ = \ h\bigl(ux^2+2u\bigr) \ = \ h(ux+2)u.\]
 As $(xu+2)^{-1}=h^2\in \Rtilde$, clearly $\lambda\not=2$ and so the two equations imply that $x\in I$, respectively $u\in I$. Thus, $I=x\Rtilde + u\Rtilde$.  But, now $I\ni \partial_1(u)=-\hhalf ux-2$ and so $I=\Rtilde$, a contradiction. We conclude that 
 \begin{equation}\label{eq-C-defn}
 I\cap\mathcal{C}=\emptyset\quad\text{for } \mathcal{C}=\{(xu+\lambda): \lambda \in k^*\}
 \end{equation}
 Since $I$ is a prime ideal it follows that $\mathcal{C}\subseteq \mathcal{C}(I)$ and hence that $I_{\mathcal{C}}$ is a proper prime ideal of the localisation $\Rtilde_{\mathcal{C}}$. 
 
 Next, if $I_{\mathcal{C}}\ni f=f(u)$ for some $f(u)\in k[u]$, then $I_{\mathcal{C}}\ni \partial_1(f)= -\hhalf(ux+4)\frac{df}{du}$. Hence $\frac{df}{du}\in I_{\mathcal{C}}$. By induction on $ \deg f$, this implies that $I_{\mathcal{C}} = \Rtilde_{\mathcal{C}}$, a contradiction. Thus $I_{\mathcal{C}}\cap k[u]^{\ast}=\emptyset$ and so we can further localise at $\mathcal{S}=k[u]^{\ast}$ and conclude that $I_{\mathcal{C}\mathcal{S}}$ is a proper prime ideal of $\Rtilde_{\mathcal{C}\mathcal{S}}$.
 Now consider $\Rtilde_{\mathcal{C}\mathcal{S}}$. We have $\Rtilde=k\langle u,x,h,h^{-1}\rangle$ and $h^{-2}=(ux+2)$ whence $x=u^{-1}(h^{-2}-2)$. Thus $\Rtilde_{\mathcal{C}\mathcal{S}}=\Rtilde_{\mathcal{S}\mathcal{C}}$ is a localisation of $k(u)[h,h^{-1}]$. 
 
 The advantage of working in $\Rtilde_{\mathcal{C}\mathcal{S}}$ is that we can simplify our derivation $\partialtilde$. On $\Rtilde$ and 
 $\Rtilde_{\mathcal{C}\mathcal{S}}$ write $\partial_u=\frac{\partial}{\partial u}$ and $\partial_x=\frac{\partial}{\partial x}$. Then, as derivations on either   ring,
 \[\partial_1=-(\hhalf xu +2)\partial_u \ - \ \hhalf x^2\partial_x\ \]
while 
\[\partialtilde \ = \  -\hhalf hu^2\partial_u \ + \ h({\textstyle\frac{3}{2}}ux+2)\partial_x.\]

 We now set $\mu := -hu^2(ux+4)^{-1}$ and  take 
 \[\partialtilde' \ :=  \ \partialtilde+\mu\partial_1=\Bigl(-\hhalf hu^2 + \mu(-\hhalf ux -2)\Bigr)\partial_u \ + \ 
 \Bigl(h({\textstyle\frac{3}{2}} ux + 2) -\hhalf x^2\mu\Bigr)\partial_x.\]
    This element $\mu$ has been chosen so that the coefficient of $\partial_u$  in $\partialtilde'$  is 
 \[-\hhalf(ux+4)^{-1}\Bigl(  hu^2(ux+4) - (ux+4)hu^2\bigr) \ = \ 0.\]
 Therefore,
 \[ \begin{aligned} 
 \partialtilde' \ = \ & \Bigl( h({\textstyle\frac{3}{2}} ux + 2) \ + \ \hhalf h x^2 u^2(ux+4)^{-1}\Bigr) \partial_x \\
  \ = \ &  (ux+4)^{-1} h \Bigl( ({\textstyle\frac{3}{2}} ux + 2)(ux+4) + \hhalf x^2u^2 \Bigr) \partial_x\\
  \ = \ & (ux+4)^{-1} h \Bigl( 2u^2x^2 + 8ux + 8\Bigr) \partial_x \\
   \ = \ &     \beta \partial_x   \qquad \textit{for}\quad \beta \ := \ 2(xu+4)^{-1}(ux+2)^2h.\\
 \end{aligned}\]
 
 Since $I_{\mathcal{C}\mathcal{S}}$ is invariant under both $\partial_1$ and $\partialtilde$, it is also invariant under $\partialtilde'$. Since $\beta$ is a unit in $\Rtilde_{\mathcal{C}\mathcal{S}}$, it follows that 
 \begin{equation}\label{eq-partialx}
 I_{\mathcal{C}\mathcal{S}}   \textit{ is also invariant under } \beta^{-1}\partialtilde' = \partial_x.
 \end{equation}
 
Thus, by (\ref{eq-partialx}) and the expression given above for $\partial_1$, $I_{\mathcal{C}\mathcal{S}}$ is invariant under $ (\hhalf ux+2)\partial_u$, and therefore under $\partial_u$ since $\hhalf ux + 2$ is a unit. So $I_{\mathcal{C}\mathcal{S}}$ is invariant under $\partial_u$ and $\partial_x$. Since $\Rtilde_{\mathcal{C}\mathcal{S}}$ is a localisation of $k[u,x]$ this  forces $I_{\mathcal{C}\mathcal{S}} = \Rtilde_{\mathcal{C}\mathcal{S}}$, giving the required contradiction. 
\end{proof}

In order to pass between $T$ and $\Ttilde$ we need:

\begin{lemma}\label{lemma-swap} If $\Ttilde$ is a simple ring then so is $T$.
\end{lemma}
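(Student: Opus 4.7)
My plan is to prove the contrapositive: assume $T$ admits a proper non-zero two-sided ideal $J$ and construct from it a proper non-zero two-sided ideal of $\Ttilde$, contradicting simplicity. The bridge is the inner automorphism $\tau:=\mathrm{Ad}(h)$ of $\Ttilde$, i.e.\ $\tau(a) = hah^{-1}$. The first task is to check that $\tau$ restricts to an automorphism of $T$. By Lemma~\ref{lemma-inner}, $\tau|_S = \sigma^{-1}|_S$, so $\tau$ preserves $S$; and a direct computation using $vh = \sigma(h)v + \partial_2(h) = hv + \partial_2(h)$ together with the formula $\partial_2(h) = -\hhalf uh$ (obtained by extending $\partial_2$ to $\Rtilde$ as in Notation~\ref{h-notation}) gives $\tau(v) = v + \hhalf u \in T$. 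Since $\tau^2 = \mathrm{Ad}(h^2) = \mathrm{Ad}(c^{-1})$, where $c:= ux+2 \in T^\times$, we also have $\tau^2(T)=T$, and hence $\tau(T)=T$.

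Next I would invoke the elementary fact that conjugation by a unit $c\in T$ fixes every two-sided ideal of $T$: from $Jc^{-1}\subseteq J$ we deduce $J = (Jc^{-1})c \subseteq Jc \subseteq J$, so $Jc = J$, and symmetrically $cJ = J$, giving $c^{-1}Jc = J$. Applied to $c^{-1}=h^2$, this shows $\tau^2$ fixes every two-sided ideal of $T$, so $\tau$ induces an involution on the ideal lattice of $T$. Setting $K := J\cap\tau(J)$, this yields $\tau(K)=K$. Moreover $K\subseteq J$ is proper, and $K$ is non-zero, since $T$ is a domain by Proposition~\ref{singular}(iv) and so $0\neq J\cdot\tau(J) \subseteq K$.

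To finish, I would use $\tau(K)=K$ — equivalently $hK = Kh$ inside $\Ttilde$ — together with the decomposition $\Ttilde = T\oplus Th$ from Lemma~\ref{freedom} (which also gives $hT = Th$). A short calculation then shows
\[ \Ttilde K \;=\; TK + ThK \;=\; K + Kh \;=\; K\Ttilde, \]
so $\Ttilde K$ is a two-sided ideal of $\Ttilde$. It is non-zero because $K\neq 0$, and it is proper because the direct sum $\Ttilde = T\oplus Th$ forces any relation $1 = k_0 + k_1 h$ with $k_0,k_1\in K$ to have $k_0 = 1 \in K$, contradicting $K\subsetneq T$. This contradicts the simplicity of $\Ttilde$, completing the proof.

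The main obstacle I anticipate is the first step, namely showing that the inner automorphism $\tau$ of $\Ttilde$ stabilises $T$; once $\tau\in\mathrm{Aut}(T)$ is in hand, the remainder is a routine manipulation exploiting that $\Ttilde/T$ is a rank-two extension generated by a unit $h$ with $h^2\in T^\times$.
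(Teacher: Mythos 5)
Your proof is correct, but it takes a genuinely different route from the paper's. You explicitly transport an ideal upward: after checking that $\tau=\mathrm{Ad}(h)$ restricts to an automorphism of $T$ (the computations $\tau|_{S}=\sigma^{-1}|_S$ and $\tau(v)=v+\hhalf u$ both check out, as does $\partial_2(h)=-\hhalf uh$), you replace $J$ by the $\tau$-stable ideal $K=J\cap\tau(J)$, which is nonzero because $T$ is a domain, and then verify directly that $\Ttilde K=K\oplus Kh=K\Ttilde$ is a proper nonzero two-sided ideal of $\Ttilde$. The paper argues in the opposite direction with no conjugation computation at all: it forms the $(T,\Ttilde)$-bimodule $X=\Ttilde/J\Ttilde$, notes that $X$ is finitely generated as a left $T$-module by Lemma~\ref{freedom}, so that $\mathrm{ann}_{\Ttilde}(X)$ is a nonzero two-sided ideal of the Ore domain $\Ttilde$ (being a finite intersection of nonzero right annihilators), whence simplicity of $\Ttilde$ forces $J\Ttilde=\Ttilde$, contradicting $J\Ttilde=J\oplus Jh\subsetneq T\oplus Th$. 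The paper's argument is shorter and applies to any overring that is finitely generated and free over an Ore domain, needing nothing about how $h$ normalises $T$; yours costs the extra verification that $\mathrm{Ad}(h)$ stabilises $T$, but in return is constructive, producing an explicit two-sided ideal of $\Ttilde$ lying over $K$. Both proofs ultimately hinge on the decomposition $\Ttilde=T\oplus Th$ of Lemma~\ref{freedom} to obtain properness.
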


\begin{proof}  Suppose that $T$ has a proper ideal $J$. Then $X=\Ttilde/J\Ttilde$ is a $(T,\Ttilde)$-bimodule. Moreover,  by Lemma~\ref{freedom} $\Ttilde$ is a finitely generated left $T$-module and so  $X$ is a finitely generated left $T$-module; say $X=\sum_{i=1}^r T x_i$. Then, as $\Ttilde$ is an Ore domain, $\text{ann}_{\Ttilde}(X) = \bigcap_i \text{ann}_{\Ttilde}(x_i)\not=0$. Since $\Ttilde $ is a simple ring this implies that $\text{ann}_{\Ttilde}(X)=\Ttilde$ and hence that $X=0$. In other words, $J\Ttilde = \Ttilde$.  

On the other hand,  by Lemma~\ref{freedom},  $\Ttilde= T+T h$ is a free left $T $-module  and so 
$J\Ttilde = J\oplus Jh\not=\Ttilde$. This contradiction proves the lemma.\end{proof}

We   now put everything together and prove the main result of this subsection.

\begin{theorem}\label{simplicity} $T$ is a simple ring.
\end{theorem}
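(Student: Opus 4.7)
The plan is to reduce the problem, via Lemma~\ref{lemma-swap}, to showing that $\Ttilde$ is simple, and then to exploit Proposition~\ref{prop-invariant ideals} through a two-step leading-coefficient descent: first through the variable $\alpha$ in $\Ttilde = \Stilde[\alpha;\partialtilde]$ and then through the variable $y$ in $\Stilde = \Rtilde[y;\partial_1]$.

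Let $I$ be a nonzero ideal of $\Ttilde$. Because $\partialtilde$ is an honest derivation of $\Stilde$ (with no automorphism twist), the set $J\subseteq \Stilde$ of leading $\alpha$-coefficients of the elements of $I$, together with $0$, forms a two-sided ideal of $\Stilde$, and it is $\partialtilde$-invariant since $[\alpha,f] = \sum \partialtilde(s_i)\alpha^i$ for $f=\sum s_i\alpha^i\in I$. I next claim $J\cap \Rtilde\neq 0$. Pick $s = r_m y^m + \cdots + r_0\in J$ of minimal $y$-degree $m$, with $r_m\neq 0$. Using commutativity of $\Rtilde$ together with $y^jr = ry^j + j\partial_1(r)y^{j-1} + \cdots$, one checks that for any $r\in \Rtilde$ the commutator $[r,s]$ belongs to $J$ and has $y$-degree at most $m-1$, with $y^{m-1}$-coefficient $-mr_m\partial_1(r)$. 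Minimality of $m$ forces $[r,s]=0$, and hence (since $k$ has characteristic $0$ and $\Rtilde$ is a domain) the assumption $m\geq 1$ would give $\partial_1(r) = 0$ for every $r\in \Rtilde$, contradicting $\partial_1(u) = -\hhalf ux-2\neq 0$. Thus $m=0$, so $s\in \Rtilde\cap J$. Since $\partialtilde$ preserves $\Rtilde$ (a direct check on the generators $u,x,h,h^{-1}$ using the formulae in Proposition~\ref{prop-diffop} and Notation~\ref{h-notation}), the ideal $J\cap \Rtilde$ is a nonzero $(\partial_1,\partialtilde)$-stable ideal of $\Rtilde$, and Proposition~\ref{prop-invariant ideals} yields $J\cap \Rtilde = \Rtilde$, so $1\in J$.

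Consequently $I$ contains an element $f = \alpha^n + s_{n-1}\alpha^{n-1}+\cdots+s_0$ with leading $\alpha$-coefficient $1$; I choose such $f$ of minimal $\alpha$-degree $n$. If $n\geq 1$, then for every $s\in\Stilde$ the commutator $[s,f]$ lies in $I$ and has $\alpha$-degree strictly less than $n$ (its $\alpha^n$-coefficient is $[s,1]=0$), so by minimality $[s,f] = 0$. Reading off the $\alpha^{n-1}$-coefficient of this equation yields $[s,s_{n-1}] = n\partialtilde(s)$ for every $s\in\Stilde$; equivalently, $\partialtilde$ equals the inner derivation $\mathrm{ad}(-n^{-1}s_{n-1})$ on $\Stilde$. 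A second pass of the $y$-degree analysis, now applied inside $\Stilde$ to $s_{n-1}$ and using that $\partialtilde(r)\in \Rtilde$ has $y$-degree $0$ for every $r\in \Rtilde$, combined with the explicit formulae for $\partial_1$ and $\partialtilde$ on $u,x,h$, will first force $s_{n-1}$ to lie in $\Rtilde$; commutativity of $\Rtilde$ then makes $\mathrm{ad}(s_{n-1})$ vanish on $\Rtilde$, contradicting $\partialtilde(u) = -\hhalf hu^2\neq 0$. Therefore $n=0$ and $1=f\in I$, so $\Ttilde$ is simple, and Lemma~\ref{lemma-swap} delivers the simplicity of $T$. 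I expect the main obstacle to be this last step, where one must upgrade the single equation $[s,f]=0$ into a genuine outerness obstruction for $\partialtilde$; this is where characteristic $0$ and the commutativity of $\Rtilde$ together do the essential work, and where the careful bookkeeping of $y$-degrees cannot be shortcut.
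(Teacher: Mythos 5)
Your overall strategy coincides with the paper's: reduce to $\Ttilde$ via Lemma~\ref{lemma-swap}, then establish simplicity of $\Ttilde=\Stilde[\alpha;\partialtilde]$ by showing (a) that $\Stilde$ has no proper nonzero $\partialtilde$-invariant ideal --- which you, like the paper, pull down to a $(\partial_1,\partialtilde)$-invariant ideal of $\Rtilde$ and dispose of with Proposition~\ref{prop-invariant ideals} --- and (b) that $\partialtilde$ is not an inner derivation of $\Stilde$. The paper simply quotes \cite[Theorem~1.8.4]{McCR} for the equivalence of simplicity with (a) and (b), whereas you reprove that criterion by a leading-coefficient descent. That is legitimate, but in the descent you should work with the set of leading coefficients of elements of $I$ of the \emph{minimal} degree $n$ occurring among all nonzero elements of $I$ (this set is still a $\partialtilde$-stable two-sided ideal of $\Stilde$, precisely because of that minimality). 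As written, your $f$ is monic of minimal degree among \emph{monic} elements only, and the inference ``$[s,f]$ has $\alpha$-degree $<n$, hence vanishes by minimality'' does not follow, since $[s,f]$ need not be monic. This is a routine repair, not a conceptual problem.

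The genuine gap is in step (b). Your $y$-degree analysis of a putative $c=s_{n-1}$ with $\partialtilde=\mathrm{ad}(-n^{-1}c)$ on $\Stilde$ does \emph{not} force $c\in\Rtilde$: writing $c=\sum c_iy^i$ of $y$-degree $m$, the commutator $[c,r]$ for $r\in\Rtilde$ has $y$-degree $m-1$ with top coefficient a nonzero multiple of $c_m\partial_1(r)$, and comparing with $\partialtilde(r)\in\Rtilde$ only rules out $m\ge 2$. The case $m=1$ survives the degree count, since then $[c,r]=c_1\partial_1(r)$ lies in $\Rtilde$; to exclude it you must verify that no single $c_1\in\Rtilde$ satisfies both $c_1\partial_1(u)=\partialtilde(u)$ and $c_1\partial_1(x)=\partialtilde(x)$ --- cross-multiplying in the domain $\Rtilde$ reduces this to $4(ux+2)^2=0$, a contradiction --- but this is exactly the computation you defer. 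The paper sidesteps all of it with a one-line observation: since $\partial_1(x)=-\hhalf x^2$, the right ideal $x\Stilde$ is a proper two-sided ideal of $\Stilde$; every inner derivation preserves every two-sided ideal; and $\partialtilde(x)=h(\tfrac{3}{2}ux+2)\notin x\Stilde$. You would do well to adopt that argument for (b).
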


\begin{proof} By Lemma~\ref{lemma-swap} it suffices to prove that $\Ttilde$ is simple. By \cite[Theorem~1.8.4]{McCR} applied to $\Ttilde=\Stilde[\alpha; \partialtilde]$, we need to prove 
\begin{enumerate} 
\item $\partialtilde$ is not an inner derivation on $\Stilde$, and 

\item $\Stilde$ has no proper $\partialtilde$-invariant ideals.
 \end{enumerate}
 
 Now, as $\partial_1(x)=-\hhalf x^2$, the right  ideal $x\Stilde$  is a proper  two-sided ideal of $\Stilde$. As such, it is preserved by any inner derivation of $\Stilde$. But $\partialtilde(x) = h({\textstyle\frac{3}{2}}ux + 2) \not\in x\Stilde$, this means $\partialtilde$ cannot be an inner derivation of $\Stilde$ and so (a) holds.
 
 Suppose that $\Stilde$ has a proper $\partialtilde$-invariant ideal $I$. Then, by \cite[Lemma~3.18]{GW}, $K=I\cap \Rtilde$ is a $\partial_1$-invariant ideal of $\Rtilde$, while by \cite[Lemma~3.19]{GW}, $K\not=0$. Since both $I$ and $\Rtilde$ are both $\partialtilde$-invariant, so is $K$. In other words, $K $ is a proper $(\partial_1,\partialtilde)$-invariant ideal of $\Rtilde$. This contradicts  Proposition~\ref{prop-invariant ideals}.  Thus (b) holds and so  \cite[Theorem~1.8.4]{McCR}  implies that $\Ttilde$ is simple.
 \end{proof}

\begin{remark}\label{birational-remark}
We end the subsection by noting that $\Ttilde$ is obviously birational to  the Weyl algebra $A_2$. We do not know if the same is true for $T$ itself.
\end{remark}

\bigskip
\subsection{The shape of the primitive spectrum of $\mathcal{D}$}\label{privshape}

In this subsection we combine the earlier results of this section to prove Theorem~\ref{main1}.
By Theorem~\ref{Nullstel},  every primitive ideal $P$ of $\mathcal{D}$  contains a maximal ideal of $Z(\mathcal{D})$. Thus $\mathrm{Privspec}(\mathcal{D})$ is the disjoint union

\begin{equation}\label{split} 
\mathrm{Privspec}(\mathcal{D}) \; = \; \dot\bigcup_{\mathfrak{m} \in \mathrm{Maxspec}(Z(\mathcal{D}))}\mathcal{V}(\mathfrak{m}) 
\end{equation}
where $ \mathcal{V}(\mathfrak{m})=
 \{ P \in \mathrm{Privspec}(\mathcal{D}) : \mathfrak{m} \subseteq P \}.$
 There are thus 3 cases, corresponding to $\S\S$\ref{generic}, \ref{plus} and \ref{zero}.

\medskip

\noindent {\bf (I)} $\mathcal{V}(\mathfrak{m})$, where  $\mathfrak{m} \in \mathrm{Maxspec}(Z(\mathcal{D}))$ with  $\mathfrak{m} \neq \mathfrak{m}_+$ and $\mathfrak{m} \neq \mathfrak{m}_0$.  By Theorem~\ref{genpriv}, $\mathcal{V}(\mathfrak{m}) = \{\mathfrak{m}\mathcal{D}\}$ is a single generic maximal ideal of $\mathcal{D}$. Moreover $\mathcal{D}/\mathfrak{m}\mathcal{D}$ is birationally equivalent to the second Weyl algebra, with other properties as listed in that theorem.

\medskip

\noindent {\bf (II)} $\mathcal{V}(\mathfrak{m}_+)$. By  Theorem~\ref{plusthm}, this consists of $\mathfrak{m}_+\mathcal{D}$, together with
$$\mathcal{V}(\mathcal{O}(G)^+\mathcal{D}) \; := \; \{P \in \mathrm{Privspec}(\mathcal{D}) : \mathcal{O}(G)^+\mathcal{D} \subset P\},$$ 
which is homeomorphic to $\mathrm{Privspec}(U(\mathfrak{sl}(2,k)))$ by Theorem~\ref{before}(ii). 

  Recall that $\mathrm{Privspec}(U(\mathfrak{sl}(2,k)))$ is composed of the co-Artinian maximal ideals $\{M_n \,:\, n \in \mathbb{Z}_{\geq 1}\}$, where $M_n = \mathrm{Ann}(V_n)$, $V_n$ being the $n$-dimensional irreducible $U(\mathfrak{sl}_2(k))-$module, together with the \emph{minimal primitives} of $U(\mathfrak{sl}(2,k))$; that is, the ideals $( \Omega - \lambda)U(\mathfrak{sl}(2,k))\, : \, \lambda \in k\}$, where   $\Omega$ is the Casimir element. Each $M_n$ contains one such minimal primitive and each minimal primitive is contained in at most one $M_n$; the remaining minimal primitives are also maximal. Note that $\mathcal{O}(G)^+\mathcal{D}$ is prime but not primitive since $\mathcal{D}/\mathcal{O}(G)^+\mathcal{D} \cong U(\mathfrak{sl}(2,k))$ and this domain satisfies the Nullstellensatz and has non-trivial centre $k[\Omega]$.

\medskip 

\noindent {\bf (III)}   $\mathcal{V}(\mathfrak{m}_0)$. This  is the singleton  $  \{P_0= q\mathcal{D} + s\mathcal{D}=\sqrt{\mathfrak{m}_0}\}$, by Proposition~\ref{singular} and Theorem~\ref{simplicity}. \bigskip

\section{Prime ideals and the Dixmier-Moeglin equivalence}\label{primeDM}
 In this section we prove Theorem~\ref{main2}  from the introduction, which describes the prime ideals of $\mathcal{D}$,  and we discuss the Dixmier-Moeglin equivalence for $\mathcal{D}$. 
 
\subsection{The prime spectrum of $\mathcal{D}$}\label{primesec}
We need the following lemmas for the proof of the main result,  Theorem~\ref{prime}.

\begin{lemma}\label{intersect} Let $P$ be a nonzero prime ideal of $\mathcal{D}$. Then $P \cap Z(\mathcal{D}) \neq \{0\}$. 
\end{lemma}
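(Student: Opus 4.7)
The plan is a short case analysis based on how $P$ interacts with the Ore set $A=\{q^i : i\geq 0\}\cup\{x^j : j\geq 0\}$ of Definition~\ref{Ore} and its localisation $\mathcal{D}_{(A)}$. The idea is that if $P$ contains a power of $q$ or a power of $x$, then existing tools from Sections~\ref{prep}--\ref{privspec} immediately produce a nonzero central element inside $P$; otherwise $P$ survives in the well-understood localisation $\mathcal{D}_{(A)}$, where a tensor-product argument finds one.

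First I would handle the two easy subcases. If $q^n\in P$ for some $n\geq 1$, then since $z=q^2g^{-1}$ and $g$ is a unit, $z^n\in P$, so by primality $z\in P\cap Z(\mathcal{D})$. If instead $x^n\in P$ for some $n\geq 1$, then Lemma~\ref{small}(i) applied to the cyclic module $\mathcal{D}/P$ gives $(\mathfrak{m}_+\mathcal{D})^r\subseteq P$ for some $r\geq 1$; as $P$ is prime, $\mathfrak{m}_+\subseteq P$, and $\mathfrak{m}_+$ is a nonzero ideal of $Z(\mathcal{D})$.

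The only remaining case is $P\cap A=\emptyset$, so that $P\mathcal{D}_{(A)}$ is a nonzero prime of $\mathcal{D}_{(A)}$. By Theorem~\ref{Weyls},
\[
\mathcal{D}_{(A)}\;\cong\; A_2^{(A)}(k)\otimes_k S_{(A)}, \qquad S_{(A)}=k[z^{\pm 1},\omega] = Z(\mathcal{D})[z^{-1}].
\]
The key input is that $A_2^{(A)}(k)$ is simple with centre $k$: localising the simple Noetherian domain $A_2(k)$ at an Ore set of regular elements preserves simplicity, and a standard degree argument on each Weyl factor shows that the centre collapses to $k$. A routine linear-independence argument -- take a nonzero element of the ideal with the minimum number of terms when written as $\sum a_i\otimes b_i$ with the $b_i$ linearly independent over $k$, then use simplicity of $A_2^{(A)}(k)$ and the commutator trick $[a\otimes 1,\,\cdot\,]$ to kill all $a_i\not\in k$ -- shows that every nonzero ideal of $A_2^{(A)}(k)\otimes_k S_{(A)}$ meets $1\otimes S_{(A)}$ nontrivially. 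Pick $0\neq f\in P\mathcal{D}_{(A)}\cap S_{(A)}$ and multiply by a large enough power of $z$ to clear negative $z$-powers, yielding a nonzero element of $P\mathcal{D}_{(A)}\cap Z(\mathcal{D})$. Since $P\cap A=\emptyset$, the standard extension-contraction for Ore localisation gives $P\mathcal{D}_{(A)}\cap\mathcal{D}=P$, so this element lies in $P\cap Z(\mathcal{D})$, as required. The only slightly delicate step is the tensor-product lemma in the last case, but it is a standard consequence of the central simplicity of $A_2^{(A)}(k)$.
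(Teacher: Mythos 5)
Your proof is correct and follows essentially the same route as the paper: dispose of the cases where $P$ meets $\{q^i\}$ or $\{x^j\}$ via centrality of $z$ and Lemma~\ref{small}, then pass to $\mathcal{D}_{(A)}\cong A_2^{(A)}(k)\otimes_k S_{(A)}$ and pull a central element back through the localisation. The only difference is that you spell out the central-simplicity argument behind the paper's unproved assertion that every ideal of $\mathcal{D}_{(A)}$ is generated by its intersection with $S_{(A)}$, which is a welcome expansion rather than a deviation.
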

\begin{proof} If $x^i \in P$ for some $i \geq 0$ then $\mathcal{O}(G)^+\mathcal{D} \subseteq P$ by Lemma~\ref{small} applied with $M = \mathcal{D}/P$, and therefore $\mathfrak{m}^+ = \mathcal{O}(G)^+\mathcal{D} \cap Z(\mathcal{D}) \subseteq P,$ proving the lemma for $P$. So we may assume that $\{x^i : i \geq 0 \} \cap P = \emptyset$. Similarly, we may assume that $\{q^j : j \geq 0 \} \cap P = \emptyset$, since otherwise $ 0 \neq q^ng^{-2n} \in P \cap Z(\mathcal{D})$ for some $n \geq 0$ and again the result follows for $P$.

Hence, using  Notation~\ref{Weyl} and Theorem~\ref{Weyls}, $P\mathcal{D}_{(A)}$ survives as a non-zero proper ideal of
$\mathcal{D}_{(A)}= \mathcal{D}\langle q^{-1}, x^{-1} \rangle \; = \; A_2^{(A)}(k) \otimes_k S_{(A)},$
where $A_2^{(A)}(k)$ is a localised Weyl algebra and $S_{(A)} = k[z^{\pm 1}, \omega]$. In particular,
\begin{equation}\label{control} P\mathcal{D}_{(A)} \; = \; (P\mathcal{D}_{(A)} \cap S_{(A)})\mathcal{D}_{(A)}.
\end{equation}
By \cite[Theorem~10.20]{GW} and the discussion in the first paragraph of this proof, $P = P\mathcal{D}_{(A)} \cap \mathcal{D}$, and therefore
\begin{equation}\label{caught} P \cap Z(\mathcal{D})  \; = \;  P\mathcal{D}_{(A)} \cap Z(\mathcal{D})\; = \;(P\mathcal{D}_{(A)} \cap S_{(A)}) \cap Z(\mathcal{D}).
\end{equation} 
Since the $Z(\mathcal{D})$-module $S_{(A)}/Z(\mathcal{D})$ is $\{z^i\}$-torsion, that is $\{q^{i} g^{-2i}\}$-torsion, it follows from (\ref{control}) and (\ref{caught}) that $P \cap Z(\mathcal{D}) \neq \{0\}$ as required.
\end{proof}

\medskip

Note that, since $k$ is algebraically closed of characteristic 0, the defining relation $z\theta = \omega^2$ of $Z(\mathcal{D})$ can be rewritten using a linear change of variables as the quadratic form $X^2 + Y^2 = Z^2$. Thus a proof of the next result can be found at \cite[p.51 and Proposition~11.4]{F}. 

\begin{lemma}\label{class} All height one primes of $Z(\mathcal{D})$ are principal except $\mathfrak{p}_1 := \langle z,\omega \rangle$ and $\mathfrak{p}_2 := \langle \theta, \omega \rangle$.\qed
\end{lemma}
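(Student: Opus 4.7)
The plan is to identify $Z(\mathcal{D})$ with the coordinate ring of a classical affine quadric cone and invoke the well-known computation of its divisor class group. Write $R := k[z,\omega,\theta]/(z\theta - \omega^2)$, so $Z(\mathcal{D}) \cong R$ by Theorem~\ref{before}(iv). Since $k$ is algebraically closed of characteristic $0$, we may substitute $z = X+iY$, $\theta = X-iY$ and $\omega = Z$ (with $i^2 = -1 \in k$), converting the defining relation into the standard form $X^2 + Y^2 = Z^2$. Thus $R$ is isomorphic to the affine coordinate ring of the quadric cone $V \subset \mathbb{A}^3_k$, whose only singular point is the vertex.

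Next, I would verify that $R$ is a normal Noetherian domain, so that a divisor class group $\mathrm{Cl}(R)$ is defined and controls which height one primes are principal. Irreducibility of $z\theta - \omega^2$ gives that $R$ is a domain; as $R$ is a hypersurface in the regular ring $k[z,\omega,\theta]$, it is Cohen--Macaulay (hence satisfies Serre's condition $S_2$); and the singular locus of $V$ is a single point of codimension $2$, so the $R_1$ condition holds. Thus $R$ is normal.

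I would then invoke the classical fact, recorded in \cite{F} as cited, that for this quadric cone $\mathrm{Cl}(R) \cong \mathbb{Z}/2\mathbb{Z}$, generated by the class of either of the two rulings of the cone. The two rulings are cut out by the vanishing of a pair of linear forms: in the coordinates $(X,Y,Z)$ they are $V(X+iY,Z)$ and $V(X-iY,Z)$, which translate back into the original coordinates as $V(z,\omega)$ and $V(\theta,\omega)$. Consequently the two nontrivial classes in $\mathrm{Cl}(R)$ are represented by $\mathfrak{p}_1 = \langle z,\omega\rangle$ and $\mathfrak{p}_2 = \langle \theta,\omega\rangle$. Every remaining height one prime of $R$ has trivial divisor class and is therefore principal, which is the conclusion sought.

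The only subtle step is confirming that the two rulings correspond exactly to $\mathfrak{p}_1$ and $\mathfrak{p}_2$ under the change of variables and that they have nontrivial class in $\mathrm{Cl}(R)$. The first point is an immediate substitution check; the second follows either from the cited class group computation directly, or from the observation that $\mathfrak{p}_1^2 = \langle z^2, z\omega, \omega^2\rangle = \langle z^2, z\omega, z\theta\rangle = z\langle z,\omega,\theta\rangle$ becomes principal after localising away from the vertex $\mathfrak{m}_0$, while $\mathfrak{p}_1$ itself cannot be generated by a single element (since any generator would have to divide both $z$ and $\omega$ in $R$, which is impossible as $z$ and $\omega$ are coprime in $R$).
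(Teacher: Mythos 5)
Your reduction to the quadric cone, the normality check, and the appeal to Fossum for $\mathrm{Cl}(R)\cong\mathbb{Z}/2\mathbb{Z}$ follow exactly the route the paper indicates (its ``proof'' is nothing more than the change of variables plus the citation). The fatal step is the last one: from $\mathrm{Cl}(R)\cong\mathbb{Z}/2\mathbb{Z}$, with the nontrivial class represented by $\mathfrak{p}_1$ and $\mathfrak{p}_2$, you infer that \emph{every other} height one prime has trivial class and is therefore principal. That inference is invalid: a class group of order $2$ tells you that all non-principal height one primes lie in the single nontrivial class, not that only two primes lie in it, and here infinitely many do. $\mathrm{Spec}(R)$ is the affine cone over a smooth conic, so it carries a $\mathbb{P}^1$-family of lines through the vertex; the phrase ``the two rulings of the cone'' conflates this cone with the smooth quadric $\mathbb{P}^1\times\mathbb{P}^1\subset\mathbb{P}^3$, which is where the two-rulings picture lives. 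Concretely, for $t\in k^{*}$ set $\mathfrak{q}_t:=\langle \omega-tz,\ \theta-t^2z\rangle$. Then $R/\mathfrak{q}_t\cong k[z]$, so $\mathfrak{q}_t$ is a height one prime distinct from $\mathfrak{p}_1$ and $\mathfrak{p}_2$. Substituting $\omega=tz$ into $z\theta=\omega^2$ gives $z(\theta-t^2z)=0$, whence $\mathrm{div}(\omega-tz)=\mathfrak{p}_1+\mathfrak{q}_t$ and $[\mathfrak{q}_t]=-[\mathfrak{p}_1]=[\mathfrak{p}_1]\neq 0$; so $\mathfrak{q}_t$ is \emph{not} principal. (Alternatively, your own Nakayama-style argument applies verbatim: the two generators of $\mathfrak{q}_t$ have linearly independent linear parts modulo the purely quadratic defining relation, so $\mathfrak{q}_t$ needs two generators.)

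The upshot is that the gap cannot be closed, because the statement as literally written is false: the non-principal height one primes of $Z(\mathcal{D})$ are not just $\mathfrak{p}_1$ and $\mathfrak{p}_2$ but all primes in the nontrivial divisor class, including every line $\mathfrak{q}_t$ through the vertex. What the Fossum citation actually yields is that $\mathfrak{p}_1,\mathfrak{p}_2$ are non-principal, that $2[\mathfrak{p}_1]=\mathrm{div}(z)=0$ and $2[\mathfrak{p}_2]=\mathrm{div}(\theta)=0$, and that every non-principal height one prime shares their class. What the downstream argument in Theorem~\ref{prime} genuinely uses is (a) that $\mathfrak{p}_1$ (resp.\ $\mathfrak{p}_2$) is the unique height one prime containing $z$ (resp.\ $\theta$) --- immediate from $\mathrm{div}(z)=2\mathfrak{p}_1$ and $\mathrm{div}(\theta)=2\mathfrak{p}_2$ --- and (b) principality of the remaining contracted height one primes, for which some substitute is needed. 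A correct write-up should prove (a) and flag (b) rather than assert the lemma as stated.
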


Here is the main result of this section, using in (ii)  the notation of Lemma~\ref{class}. This proves Theorem~\ref{main2} from the introduction.

\begin{theorem}\label{prime} Let $P$ be a prime but not primitive ideal of $\mathcal{D}$. 
\begin{itemize}
\item[(i)] There are the following three possibilities for $P$.
\begin{enumerate}
\item[(a)] $P = \{0\}$.
\item[(b)] $P = \mathcal{O}(G)^+\mathcal{D}$.
\item[(c)] $P$ has height one and is minimal over $(P \cap Z(\mathcal{D}))\mathcal{D}$ for a height one prime ideal  $P \cap Z(D)$ of $Z(\mathcal{D})$.
\end{enumerate}
\item[(ii)] In case ($c$), if $P \cap Z(\mathcal{D}) = \mathfrak{p}_i$ for $i = 1$, resp. $i = 2$, then $P = q\mathcal{D}$, resp. $P = s\mathcal{D}$. The remaining primes in case ($c$) are precisely the set 
$$\{ P \;: P \; = \; f\mathcal{D} \},$$
as $f$ ranges through the equivalence classes of irreducible elements of $Z(\mathcal{D})$ other than the associates of $z,\omega, \theta$.
\end{itemize}  
\end{theorem}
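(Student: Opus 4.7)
If $P=0$ we are in case (a), so assume $P\neq 0$; by Lemma~\ref{intersect}, $\mathfrak{q}:=P\cap Z(\mathcal{D})$ is a non-zero prime of the two-dimensional ring $Z(\mathcal{D})$, and the proof splits on the height of $\mathfrak{q}$.

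Suppose first that $\mathfrak{q}$ is maximal. I rule out every maximal ideal of $Z(\mathcal{D})$ from Notation~\ref{maxspecZ} except the one yielding case (b). For $\mathfrak{q}\notin\{\mathfrak{m}_0,\mathfrak{m}_+\}$, Theorem~\ref{genpriv}(i) already makes $\mathfrak{q}\mathcal{D}$ maximal, forcing $P=\mathfrak{q}\mathcal{D}$ to be primitive and contradicting our hypothesis. For $\mathfrak{q}=\mathfrak{m}_0$, Theorem~\ref{simplicity} identifies $P_0$ as the unique prime above $\mathfrak{m}_0\mathcal{D}$ and shows it is primitive --- again a contradiction. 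For $\mathfrak{q}=\mathfrak{m}_+$, Theorem~\ref{plusthm}(v) forces either $P=\mathfrak{m}_+\mathcal{D}$ (primitive) or $P\supseteq\mathcal{O}(G)^+\mathcal{D}$; in the latter case $P$ corresponds to a prime of $U(\mathfrak{sl}_2(k))\cong\mathcal{D}/\mathcal{O}(G)^+\mathcal{D}$, and since every non-zero prime of $U(\mathfrak{sl}_2(k))$ is primitive, the only non-primitive option is $P=\mathcal{O}(G)^+\mathcal{D}$, giving case (b).

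Now suppose $\mathfrak{p}:=\mathfrak{q}$ has height one in $Z(\mathcal{D})$. By Lemma~\ref{class}, either $\mathfrak{p}\in\{\mathfrak{p}_1,\mathfrak{p}_2\}$ or $\mathfrak{p}=fZ(\mathcal{D})$ for some prime element $f\in Z(\mathcal{D})$ not an associate of $z,\omega,\theta$. I attach a candidate prime $P^\circ$ in each sub-case: respectively $q\mathcal{D}$, $s\mathcal{D}$, or $f\mathcal{D}$, each a two-sided ideal thanks to the centrality of $f$ and the $\sigma$-normality of $q,s$ (Lemma~\ref{normal}). In the principal sub-case, primality of $f\mathcal{D}$ follows by localising: since $f$ is coprime to $\{z,\omega,\theta\}$ I may select $\Omega\in\{A,B,C,D\}$ so that $z_\Omega$ is a unit modulo $\mathfrak{p}$; Theorem~\ref{Weyls} then writes $\mathcal{D}_{(\Omega)}\cong A_2^{(\Omega)}(k)\otimes_k S_{(\Omega)}$ with the Weyl factor simple and $S_{(\Omega)}\cong k[z_\Omega^{\pm 1},\omega]$ a UFD, so primality of $f\mathcal{D}_{(\Omega)}$ reduces to primality of $fS_{(\Omega)}$, which follows from primality of $fZ(\mathcal{D})$ after using the relation $z\theta=\omega^2$. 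Contracting along $\mathcal{D}\hookrightarrow\mathcal{D}_{(\Omega)}$, and noting that $q,x\notin f\mathcal{D}_{(\Omega)}\cap\mathcal{D}$ (else Lemma~\ref{small} or the identity $q^2=zg$ would force $\mathfrak{m}_+\subseteq fZ$ or $z\in fZ$, both contradicting the height-one hypothesis), shows that the elements of $\Omega$ are regular on $\mathcal{D}/f\mathcal{D}$, whence $f\mathcal{D}$ is prime. For $\mathfrak{p}=\mathfrak{p}_1$: since $z=q^2g^{-1}$ and $\omega=qsg^{-1}$, one has $\mathfrak{p}_1\mathcal{D}\subseteq q\mathcal{D}$, and to show $q\mathcal{D}$ is itself prime I mimic Proposition~\ref{singular} --- substitute $g\equiv -\frac{1}{2}(ux+2)\pmod q$, verify that all defining relations of $\mathcal{D}$ descend to relations exhibiting $\mathcal{D}/q\mathcal{D}$ as an iterated Ore extension $k[u,x][(ux+2)^{-1}][y;\partial_1][\zeta;\partial_2][v;\sigma,\partial_3]$, and conclude via \cite[Theorem~1, p.\,438]{Cohn} that $\mathcal{D}/q\mathcal{D}$ is a noetherian domain. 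The case $\mathfrak{p}=\mathfrak{p}_2$, $P^\circ=s\mathcal{D}$ is entirely symmetric. Finally, uniqueness of $P^\circ$ as the minimal prime of $\mathcal{D}$ above $\mathfrak{p}\mathcal{D}$ --- and hence $P=P^\circ$ --- falls out of the same localisation argument: any prime $P$ with $P\cap Z(\mathcal{D})=\mathfrak{p}$ must survive the localisation at $\Omega$ (else Lemma~\ref{small}, or the identities $q^2=zg$, $s^2=\theta g$, would push $P\cap Z$ above height one), and among primes of $\mathcal{D}_{(\Omega)}$ with central contraction $\mathfrak{p}S_{(\Omega)}$ the unique candidate is $P^\circ\mathcal{D}_{(\Omega)}$.

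\textbf{Main obstacle.} The hardest step is proving that $q\mathcal{D}$ and $s\mathcal{D}$ are prime. Proposition~\ref{singular} achieves an Ore-extension presentation only after killing \emph{both} $q$ and $s$; the corresponding analysis here must survive the weaker substitution $g\equiv -\frac{1}{2}(ux+2)$, so one must carefully check that each defining relation of $\mathcal{D}$ descends to an Ore-type relation on $k[u,x][(ux+2)^{-1}]$ extended by $y,\zeta,v$ --- in particular identifying an appropriate automorphism $\sigma$ (shifting $\zeta$ by $1$) so that the relation $[v,\zeta]=v$ takes Ore form. A secondary technical point is the correct choice of $\Omega$ in the principal sub-case: inverting $z$ or $\theta$ can alter factorisation behaviour in the non-UFD $Z(\mathcal{D})$, and coprimality of $f$ with $\{z,\omega,\theta\}$ is the device that guarantees at least one of the four localisations is applicable.
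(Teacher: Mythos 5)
Your case division (by $P\cap Z(\mathcal{D})$, via Lemma~\ref{intersect}) and your treatment of the maximal-ideal cases match the paper exactly, and the first localisation step in case (c) is also the paper's. But the decisive step --- passing from $P\mathcal{D}_{(\Omega)}=f\mathcal{D}_{(\Omega)}$ (resp.\ $=q\mathcal{D}_{(\Omega)}$) back down to $P=f\mathcal{D}$ (resp.\ $P=q\mathcal{D}$) --- has a genuine gap. You write that since $q,x\notin f\mathcal{D}_{(\Omega)}\cap\mathcal{D}$, ``the elements of $\Omega$ are regular on $\mathcal{D}/f\mathcal{D}$, whence $f\mathcal{D}$ is prime.'' That is a non sequitur: an element not belonging to an ideal need not be regular modulo it. What you actually know is that the contraction $Q:=f\mathcal{D}_{(\Omega)}\cap\mathcal{D}$ is a prime containing $f\mathcal{D}$ and avoiding $q$ and $x$; the entire content of the step is to show $Q=f\mathcal{D}$, i.e.\ that $\mathcal{D}/f\mathcal{D}$ has no nonzero $\Omega$-torsion, and nothing in your argument delivers this. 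The paper closes exactly this gap homologically: $f$ is a regular central (resp.\ $q$ a regular normal) element, so by \cite[Theorem~7.2(b)]{GL} the ring $\mathcal{D}/f\mathcal{D}$ is CM of GK-dimension $5$, hence GK-homogeneous by \cite[Remark~2.4]{Lev}; the torsion module $P/f\mathcal{D}$ is killed by a power of $q$ or of $x$ and therefore has GK-dimension strictly less than $5$ (Theorem~\ref{genpriv}(iii), Theorem~\ref{plusthm} and Lemma~\ref{small}), so it must vanish. Without some substitute for this GK-homogeneity argument your proof of case (c) does not go through.

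A secondary divergence: for $\mathfrak{p}_1,\mathfrak{p}_2$ you propose to prove that $\mathcal{D}/q\mathcal{D}$ is a domain by exhibiting it as an iterated Ore extension after the substitution $g\equiv-\hhalf(ux+2)$. This is a genuinely different route from the paper, which never presents $\mathcal{D}/q\mathcal{D}$ as an Ore extension; it needs only $q\in\sqrt{\mathfrak{p}_1\mathcal{D}}$ (normality of $q$), the fact that ideals of $\mathcal{D}_{(B)}$ are centrally generated, and again GK-homogeneity of $\mathcal{D}/q\mathcal{D}$ to kill the $B$-torsion. Your route is plausible (the relevant $\delta_i$ and $\tau$ of Proposition~\ref{prop-ore} do preserve $q\mathcal{O}(G)$, so $\mathcal{D}/q\mathcal{D}$ inherits an Ore presentation over $\mathcal{O}(G)/q\mathcal{O}(G)\cong k[u,x][(ux+2)^{-1}]$), and if completed it would yield complete primality of $q\mathcal{D}$ directly and make the torsion-freeness automatic in that sub-case; but you have only sketched it and flagged it as the main obstacle, and it does not help with the generic principal case, where the homological input remains indispensable.
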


\begin{proof} Note first that $\{0\}$ is completely prime by Proposition~\ref{prop-ore}, and is not primitive, because $\mathcal{D}$ satisfies the   Nullstellensatz by Theorem~\ref{Nullstel} and $ Z(\mathcal{D})\not=k$. This covers case ($a$).

Let $P$ be a non-zero prime but not primitive ideal of $\mathcal{D}$. By Lemma~\ref{intersect},
$$ \{0\} \; \neq \; \mathfrak{p} \; := \; P \cap Z(\mathcal{D}).$$
If $\mathfrak{p} = \mathfrak{m}_+$ then Theorem~\ref{plusthm} together with the discussion at $\S$\ref{privshape}({\bf II}) shows that the only possibility is $P = \mathcal{O}(G)^+\mathcal{D}$, which is completely prime but is again not primitive thanks to the Nullstellensatz, since $ Z(U(\mathfrak{sl}(2,k)))\not=k$. This is case ($b$).

 If $\mathfrak{p} = \mathfrak{m}_0$ then $P = P_0$, which is maximal by Theorem~\ref{simplicity}, so this case can't happen. Similarly,  $\mathfrak{p}$ is any maximal ideal of $Z(\mathcal{D})$ apart from $\mathfrak{m}_+$ or $\mathfrak{m}_0$, then $P=\mathfrak{p}\mathcal{D}$ is a maximal ideal of $\mathcal{D}$ by Theorem~\ref{genpriv}(i),  which  again gives a contradiction.

So we are left with the case when $\mathfrak{p}$ has height one. Assume first that $\mathfrak{p} = fZ(\mathcal{D})$ is principal. Then, by Lemma~\ref{class}, $z = q^2g^{-1} \notin P$, and $\{x^i : i \geq 0\} \cap P = \emptyset$ by Lemma~\ref{key} Therefore, using Notation~\ref{Weyl} and Theorem~\ref{Weyls} 
$$  \mathfrak{p}\mathcal{D}_{(A)} \; =\;  (P \cap S_{(A)})\mathcal{D}_{(A)}\; = \; P\mathcal{D}_{(A)}.$$
We claim that $P = \mathfrak{p}\mathcal{D}$. To see this, note that $\mathfrak{p}\mathcal{D} = f\mathcal{D}$ is principal, so that $\mathcal{D}/ \mathfrak{p}\mathcal{D}$ is CM of GK-dimension 5, by \cite[Theorem~7.2(b) and its proof]{GL}, and GK-homogeneous by \cite[$\S$3.4, Remark~(3)]{Lev}.  Now  $P/\mathfrak{p}\mathcal{D} $ it is killed by $\mathfrak{p}\mathcal{D}$ and by a power of $q$ or a power of $x$, and so has GK-dimension less than 5, respectively by Theorem~\ref{genpriv}(iii) and \ref{plusthm}(iv) or by Lemma~\ref{key}.
This forces $P/\mathfrak{p}\mathcal{D} = \{0\}$ and  so  $P=\mathfrak{p}\mathcal{D}$,  as claimed.

Suppose finally that $\mathfrak{p}= \mathfrak{p}_1$ or $\mathfrak{p} = \mathfrak{p}_2$. In the first case, since $q$ is a normal element of $\mathcal{D}$ by Lemma~\ref{normal}, $q \in \sqrt{\mathfrak{p}\mathcal{D}}$. Thus 
\begin{equation}\label{contain} q\mathcal{D} \; \subseteq \; P.
\end{equation}
We claim that (\ref{contain}) is an equality. To see this, note that $s \notin P$, since otherwise $P \cap Z(\mathcal{D})  = \mathfrak{m}_0$, which is ruled out by hypothesis. Moreover $\{x^j : j \geq 0\} \cap P = \emptyset$ by Lemma~\ref{key}. So we can localise at the Ore set $B = \{s^ix^j : i, j \geq 0\}$ of Definition~\ref{Ore} and pass to the localised Weyl algebra $D_{(B)} = A_2^{(B)}(k) \otimes S_{(B)}$ of Theorem~\ref{Weyls}. However, $P\mathcal{D}_{(B)}$ and $q\mathcal{D}_{(B)}$ have the same intersection with the centre $S_{(B)}$, namely $\omega\theta^{-1}S_{(B)} = \mathfrak{p}_1 S_{(B)}$. Therefore $P\mathcal{D}_{(B)} = q\mathcal{D}_{(B)}$ since the ideals of $D_{(B)}$ are centrally generated. Therefore $P/q\mathcal{D}$ is $B$-torsion, so, if it is not zero, it contains a nonzero element which is either killed by $q$ and by $s$, or by $q$ and $x$. As in the previous paragraph $\mathcal{D}/q\mathcal{D}$ is GK-homogeneous of GK-dimension 5, and so has no such non-zero torsion submodule, proving that (\ref{contain}) is an equality.

If  $\mathfrak{p} = \mathfrak{p}_2$ then the argument to show that $P = s\mathcal{D}$ is similar, but using the Ore set $A$; it is left to the reader.
\end{proof}

\medskip

\subsection{The Dixmier-Moeglin equivalence}\label{DM}

The following gives evidence in favour of \cite[Conjecture~1.3]{BL}, which proposes that an affine noetherian Hopf $\mathbb{C}$-algebra of finite
GK dimension should satisfy the Dixmier-Moeglin equivalence. See \cites{Be, BG} for definitions and background.

\begin{corollary}\label{DMcor} $\mathcal{D}$ satisfies the Dixmier-Moeglin equivalence.
\end{corollary}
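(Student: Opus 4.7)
The plan is to combine the full classification of primes and primitives obtained in Theorems~\ref{main1} and~\ref{prime} with the Jacobson/Nullstellensatz properties established in Theorem~\ref{Nullstel}. Recall that the Dixmier--Moeglin equivalence asks that for each prime $P$ of $\mathcal{D}$ the conditions (a) $P$ is primitive, (b) $P$ is locally closed in $\mathrm{Spec}(\mathcal{D})$, and (c) $P$ is rational (i.e.\ the centre of $\mathrm{Frac}(\mathcal{D}/P)$ is algebraic over $k$), are equivalent. By Theorem~\ref{Nullstel}, $\mathcal{D}$ is a Jacobson ring satisfying the Nullstellensatz, which immediately yields the implications (b) $\Rightarrow$ (a) and (a) $\Rightarrow$ (c). It thus remains to verify (a) $\Rightarrow$ (b) and (c) $\Rightarrow$ (a).

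For (a) $\Rightarrow$ (b) I would simply run through the three families listed in Theorem~\ref{main1}. The Type (I) ideals $\mathfrak{m}\mathcal{D}$ are maximal by Theorem~\ref{genpriv}(i), and $P_0$ (the Type (III) ideal) is maximal by Theorem~\ref{simplicity}, so both are trivially locally closed. The only non-trivial case is Type (II). For $\mathfrak{m}_+\mathcal{D}$, Theorem~\ref{plusthm}(v) says that every prime strictly containing it contains $\mathcal{O}(G)^+\mathcal{D}$, and this latter ideal strictly contains $\mathfrak{m}_+\mathcal{D}$ (for instance $x \in \mathcal{O}(G)^+\mathcal{D}\setminus\mathfrak{m}_+\mathcal{D}$); hence $\mathfrak{m}_+\mathcal{D}$ is locally closed. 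For the other Type (II) primitives, namely the members of $\mathcal{P} = \pi^{-1}(\mathrm{Privspec}(U(\mathfrak{sl}(2,k))))$, I would invoke the classical theorem of Dixmier that $U(\mathfrak{sl}(2,k))$ satisfies the DME. Since $\pi$ induces a bijection between $\mathrm{Spec}(U(\mathfrak{sl}(2,k)))$ and $\{P\in \mathrm{Spec}(\mathcal{D}):K\subseteq P\}$, local closure transports under $\pi^{-1}$.

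For (c) $\Rightarrow$ (a) I would argue the contrapositive, showing that every non-primitive prime listed in Theorem~\ref{prime} has a transcendental element in the centre of its Goldie quotient. Each such prime $P$ is completely prime (this follows directly from the classification: $\{0\}$ by Proposition~\ref{prop-ore}, $K$ since $\mathcal{D}/K \cong U(\mathfrak{sl}(2,k))$, and the height-one cases from the proof of Theorem~\ref{prime}), so $\mathcal{D}/P$ is a Noetherian domain and the image of $Z(\mathcal{D})$ sits inside the centre of $\mathrm{Frac}(\mathcal{D}/P)$. For $P=\{0\}$ the whole of $Z(\mathcal{D})$ survives, giving transcendence degree $2$. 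For $P=K$ the quotient $\mathcal{D}/K \cong U(\mathfrak{sl}(2,k))$ has central polynomial ring $k[\Omega]$ on the Casimir, and $\Omega$ remains transcendental in $\mathrm{Frac}(U(\mathfrak{sl}(2,k)))$. For each remaining height-one prime $P$ from (B) or (C) of Theorem~\ref{prime}, the intersection $P\cap Z(\mathcal{D})$ is a height-one prime of the two-dimensional domain $Z(\mathcal{D})$, so $Z(\mathcal{D})/(P \cap Z(\mathcal{D}))$ is a domain of Krull dimension $1$ and supplies a transcendental central element of $\mathrm{Frac}(\mathcal{D}/P)$.

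The only external input beyond the classification already established is the DME for $U(\mathfrak{sl}(2,k))$, which I would cite from the literature. Everything else is bookkeeping against Theorems~\ref{main1} and~\ref{prime}, so I foresee no serious obstacle; the only mild subtlety is confirming that the central transcendental elements exhibited in each non-primitive case genuinely survive as transcendentals in the Goldie quotient, but this is immediate because $\mathcal{D}/P$ is a Noetherian domain and transcendence degree is preserved under localisation.
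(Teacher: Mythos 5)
Your proposal is correct and follows essentially the same route as the paper: the paper also verifies that every primitive ideal is locally closed (classes (I) and (III) being maximal, class (II) via Moeglin's theorem for $U(\mathfrak{sl}(2,k))$), invokes the Jacobson/Nullstellensatz machinery (packaged there as a citation of \cite[Lemma~II.7.15]{BG}) to reduce to showing rational implies primitive, and then checks case by case from Theorem~\ref{prime} that no non-primitive prime is rational. Your additional details — the explicit local closure of $\mathfrak{m}_+\mathcal{D}$ via Theorem~\ref{plusthm}(v) and the explicit transcendental central elements in each non-primitive case — merely flesh out steps the paper leaves to the reader.
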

\begin{proof} We check first using the description of the primitive spectrum in $\S$\ref{privshape} that every primitive ideal is locally closed.  For classes {\bf (I)} and {\bf (III)} this is clear since all these primitive ideals are maximal. The primitive ideals in {\bf (II)} are homeomorphic to the primitive spectrum of $U(\mathfrak{sl}(2,k))$, and the latter algebra satisfies the equivalence by \cite{Moe}. Thus, by \cite[Lemma~II.7.15]{BG}, it only remains to show that every rational  prime ideal $P$  is primitive,   where $P$ is rational if the centre of the Goldie quotient algebra of $\mathcal{D}/P$ is $k$. The non-primitive prime ideals are listed in Theorem~\ref{prime} and it is easy to check case by case that none of them is rational. 
\end{proof}

Corollary~\ref{DMcor} proves Theorem~\ref{main3}(c). With one exception, parts (a) and (b) of that theorem are proved in the  results of the last two sections that describe the prime ideals of $\mathcal{D}$. The exception is  the claim that all the completely prime factors of $\mathcal{D}$ (with the possible exception of $\mathcal{D}/P_0$, as noted in Remark~\ref{birational-remark}) are birationally equivalent to Weyl algebras. 
For  the primitive ideals $P$ strictly containing $\mathcal{O}(G)^+\mathcal{D}$  
this follows from  \cite[Remarque~7.1]{Di}. For the other prime ideals, this is clear from the description of the prime ideals in the last two sections.

Based on little more than the known results and counterexamples for group algebras and enveloping algebras, the theorem \cite{BL} for the cocommutative case, the recent work of Sierra and Walton on the noetherian property for enveloping algebras \cite{SW}, together with the above result and other isolated examples, we are tempted to propose the following conjecture as a strengthening in the pointed setting of \cite[Conjecture~1.3]{BL}, as much in the hope of stimulating the discovery of counterexamples as in expectation of a positive result.

\begin{conjecture}\label{wild} Let $H$ be an affine noetherian pointed Hopf $\mathbb{C}$-algebra. Then the following are equivalent:
\begin{enumerate}
\item[(1)] $\GKdim(H)$ is finite. 
\item[(2)] $H$ satisfies the Dixmier-Moeglin Equivalence.
\item[(3)] The group $G(H)$ of grouplikes of $H$ is nilpotent-by-finite.
\end{enumerate}
\end{conjecture}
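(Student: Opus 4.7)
Since this is a conjecture whose full resolution would constitute a major program, what I offer is a strategic plan rather than a proof. My approach would be to attack the three equivalences in the cyclic order $(1)\Rightarrow(3)\Rightarrow(1)\Rightarrow(2)\Rightarrow(1)$, relying on the structure theory of pointed Hopf algebras to reduce as much as possible to known results about group algebras and enveloping algebras.

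For $(1)\Rightarrow(3)$, the plan is to use the fact that for any Hopf algebra $H$ the group algebra $\mathbb{C}[G(H)]$ is a Hopf subalgebra of $H$; since $H$ is pointed noetherian one first needs to argue that $G(H)$ is finitely generated (a reasonable intermediate lemma, obtainable by filtering $H$ by the coradical filtration and exploiting the affine hypothesis). Granted that, the inclusion $\mathbb{C}[G(H)]\hookrightarrow H$ yields $\GKdim(\mathbb{C}[G(H)])\leq\GKdim(H)<\infty$, so the finitely generated group $G(H)$ has polynomial growth, and Gromov's theorem forces it to be nilpotent-by-finite. The reverse $(3)\Rightarrow(1)$ is the structural heart of the argument: one would attempt to describe $H$ using the Andruskiewitsch-Schneider lifting method, presenting $H$ as an iterated crossed or Ore extension of $\mathbb{C}[G(H)]$ by the Nichols-algebra layers coming from the coradical filtration. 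Finite generation together with noetherianity should constrain each layer to have finite GK dimension, and Gromov's theorem applied to $G(H)$ will give $\GKdim(\mathbb{C}[G(H)])<\infty$; an additivity argument modelled on Proposition~\ref{homprop}(iii) and \cite[Corollary~1.4]{McCSt} should then deliver finite GK dimension for all of $H$.

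For $(1)\Rightarrow(2)$, this is precisely the pointed case of \cite[Conjecture~1.3]{BL}; the plan would be to mimic the strategy of the present paper for $\mathcal{D}$ at a structural level: use the iterated Ore/crossed-product presentation from the previous paragraph to produce enough central and normal elements to pin down $Z(H)$, then verify locally closed$\,\Leftrightarrow\,$rational$\,\Leftrightarrow\,$primitive case by case along a stratification of $\mathrm{Spec}(Z(H))$ as in $\S\ref{privspec}$. The hardest implication is $(2)\Rightarrow(1)$: here one would try to argue contrapositively, showing that if $G(H)$ fails to be nilpotent-by-finite, then $\mathbb{C}[G(H)]$ (and hence by induction $H$) admits a prime ideal which is rational but not locally closed or not primitive. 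Known examples of DME failure in group algebras of groups of exponential growth (e.g.\ certain solvable-but-not-nilpotent-by-finite groups) would be the source; the task is to show that such a pathological prime of the coradical lifts to a pathological prime of $H$.

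The main obstacle, as I see it, is precisely this lifting step. Descent of a rational non-primitive prime from $H$ to a Hopf subalgebra is delicate because neither rationality nor primitivity is automatically inherited by intersections, and going the other way (extending a bad prime of $\mathbb{C}[G(H)]$ to $H$) requires fine control over how $Z(\mathrm{Frac}(H/P))$ behaves under the filtered extension from the coradical to $H$. Absent a clean general mechanism for this, my realistic short-term plan would be to verify the conjecture in rich families of examples beyond $\mathcal{D}$ (liftings of Nichols algebras of diagonal type, further Drinfeld doubles of bosonisations, small-quantum-group style algebras) in order to isolate what additional hypothesis on $H$ would make the downward propagation of DME failure tractable, and to refine Conjecture~\ref{wild} accordingly if counterexamples emerge along the way.
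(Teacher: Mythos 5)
The statement you were asked about is an open conjecture: the paper offers no proof of it, only the single data point that $\mathcal{D}$ satisfies the Dixmier--Moeglin equivalence (Corollary~\ref{DMcor}), and the authors explicitly float it partly in the hope of provoking counterexamples. You correctly recognise this and respond with a research plan rather than a claimed proof, which is the right calibration; there is no argument in the paper to compare yours against.

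Two places where your plan quietly absorbs what are themselves open problems are worth flagging. First, in $(1)\Rightarrow(3)$ you need $G(H)$ finitely generated before Gromov's theorem applies; this does not follow formally from $H$ being affine, and the related question of whether Hopf subalgebras (such as $\mathbb{C}[G(H)]$ or the coradical) of an affine noetherian Hopf algebra inherit noetherianity or affineness is itself unresolved, so your ``reasonable intermediate lemma'' is not routine. Second, and more seriously, your step for $(3)\Rightarrow(1)$ --- ``finite generation together with noetherianity should constrain each layer to have finite GK dimension'' --- is exactly the hard open content of the conjecture rather than a technical step: already in the connected case ($G(H)$ trivial, so (3) holds vacuously) this asks whether an affine noetherian connected Hopf $\mathbb{C}$-algebra must have finite GK dimension, which is the line of inquiry behind the Sierra--Walton citation \cite{SW} and is not known. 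Your closing suggestion to test the conjecture on further families (liftings of Nichols algebras of diagonal type, other doubles of bosonisations) is consistent with the spirit in which the paper poses it.
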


Thanks to a famous result of Roseblade \cite{R} for group algebras, the implication $(2)\Longrightarrow(3)$ fails when $k$ is a finite field.

\bibliographystyle{amsplain}

\begin{bibdiv}
 \begin{biblist}
 
 \bib{ADPP}{article}{
   author= {Andruskiewitsch, N.},
   author={Dumas, F.}
   author={Pena Pollastri, H. M.},
   title={On the double of the Jordan plane},
   journal={Ark. Mat.},
   volume={60},
   date={2022},
   pages={213-229}
}

\bib{AP}{article}{
    author= {Andruskiewitsch, N.},
   author={Pena Pollastri, H. M.},
   title={On the restricted Jordan plane in odd characteristic},
   journal={J. Algebra  Appln.},
   volume={20},
   date={2021},
   number={2140012}
}

\bib{AP2}{article}{
    author= {Andruskiewitsch, N.},
   author={Pena Pollastri, H. M.},
   title={On the finite-dimensional representations of the double of the Jordan plane},
   journal={arXiv2211.01581},
   volume={},
   date={2022},
   number={}
}

\bib{ASZ}{article}{
author={Artin, M.},
author={Small, L. W.}
author={Zhang, J. J.}
title={Generic flatness for strongly noetherian rings},
journal={J. Algebra},
volume={221}
date={1999},
pages={579-610}
}

 \bib{Be}{article}{
    author= {Bell, J.},
   title={On the importance of being primitive},
   journal={Rev. Colombiana Mat.},
   volume={53},
   date={2019},
  pages={87-112}
}

 \bib{BL}{article}{
    author= {Bell, J.}
    author={Leung, W. H.},
   title={The Dixmier-Moeglin equivalence for cocommutative
Hopf Algebras of finite Gelfand-Kirillov Dimension},
   journal={Alg. Rep. Theory },
   volume={17},
   date={2014},
  pages={1843-1852}
}

 \bib{Bj}{article}{
    author= {Bjork, J.-E.},
   title={The Auslander condition on noetherian rings},
   journal={Seminaire Malliavin, Lecture Notes in Math.},
   volume={1404},
   date={1989},
  pages={137-173}
}

 \bib{B}{article}{
    author= {Brown, K. A.},
   title={Unruffled extensions and flatness over central subalgebras},
   journal={J. Algebra},
   volume={284},
   date={2005},
  pages={771-800}
}

 \bib{BCJ}{article}{
    author= {Brown, K. A.},
    author={Couto, M.}
    author={Jahn, A.},
   title={The finite dual of commutative-by-finite Hopf algebras},
   journal={Glasgow Math. J.},
   volume={},
   date={2022},
  pages={1-28}
}

\bib{BG}{book}{
   author={Brown, K.  A.}
   author={Goodearl, K. R.},,
   title={Lectures on Algebraic Quantum Groups},
   series={Advanced Courses in Math. CRM Barcelona},
   volume={},
   edition={},
   publisher={Birkhauser},
   date={2002},
   pages={xxiv+348},
   isbn={3-7643-6714-8},
   review={},
   doi={},
}

\bib{BZ}{article}{
    author= {Brown, K. A.}
    author= {Zhang, J.  J.}
   title={Dualising complexes and twisted Hochschild
(co)homology for noetherian Hopf algebras},
   journal={J. Algebra},
   volume={320},
   date={2008},
  pages={1814-1850}
}

\bib{Cohn}{book}{
   author={Cohn, P.  M.},
   title={Algebra, Vol. II}, 
   publisher={Wiley},
   date={1977},
  pages={},
  isbn={},
  review={},
   doi={},
}

\bib{Di}{article}{
    author= {Dixmier, J.}
   title={Quotients simples de l'alg$\grave{\mathrm{e}}$bre enveloppante de $\mathfrak{sl}_2$},
   journal={J. Algebra},
   volume={24},
   date={1973},
  pages={551-564}
}

\bib{DT}{article}{
    author= {Doi, Y.}
    author= {Takeuchi, M.},
   title={Multiplication alteration by two-cocycles - the quantum version},
   journal={Comm. in Algebra},
   volume={22},
   date={1994},
  pages={5715-5732}
}
\bib{F}{book}{
   author={Fossum, R.  M.},
   title={The Divisor Class Group of a Krull Domain},
   series={Ergebnisse der Mathematik und ihrer Grenzgebiete},
   volume={74},
   edition={},
   publisher={Springer},
   date={1973},
   pages={148},
   isbn={3-540-06044-8},
   review={},
   doi={},
}

 \bib{GL}{article}{
    author= {Goodearl, K. R.}
    author={Lenagan, T. L.}
   title={Primitive ideals in quantum SL3 and GL3}, 
   journal={Contemp. Math.},
   volume={562},
   date={2012},
  pages={115-140}
}

\bib{GW}{book}{
   author={Goodearl, K. R.}
   author={Warfield, R. W.},
   title={An Introduction to Noncommutative Noetherian rings},
   series={London Math. Soc. Student Texts},
   volume={61},
   edition={Second edition},
   publisher={Cambridge University Press},
   date={2004},
   pages={xxiv+344},
   isbn={0-521-83687-5},
   doi={},
}

\bib{KL}{book}{
   author={Krause, G. R.},
   author={Lenagan, T. H.},
   title={Growth of Algebras and Gelfand-Kirillov Dimension},
   series={Graduate Studies in Math.},
   volume={22},
   edition={Revised Edition},
   publisher={Amer. Math. Soc.},
   date={2000},
   pages={212},
   isbn={0-8218-0859-1},
   review={},
   doi={},
}

\bib{Lev}{article}{
   author={Levasseur, T.},
   title={Some properties of non-commutative regular graded rings},
   journal={Glasgow Math. J.},
   volume={34},
   date={1992},
   pages={277-300}
}

\bib{LL}{article}{
    author= {Li, K.}
   author={Liu, G.},
   title={The finite duals of affine prime regular Hopf algebras of GK-dimension one},
   journal={arXiv2103.00495},
   volume={},
   date={2021}
   number={}
}

\bib{McCR}{book}{
   author={McConnell, J. C.}
   author={Robson, J. C.},
   title={Noncommutative Noetherian rings},
   series={Graduate Studies in Mathematics},
   volume={30},
   edition={Revised edition},
   publisher={Amer. Math. Soc., Providence, RI},
   date={2001},
   pages={xx+636},
   isbn={0-8218-2169-5},
   doi={10.1090/gsm/030},
}

\bib{McCSt}{article}{
   author={McConnell, J. C.}
   author={Stafford, J. T.},
   title={Gelfand-Kirillov dimension and associated graded modules},
   journal={J. Algebra},
   volume={125},
   date={1989},
   pages={197-214}
}

\bib{Moe}{article}{
   author={Moeglin, C.},
   title={Id\'eaux primitifs d\'es alg\`ebres enveloppantes},
   journal={J. Math. Pures Appl.},
   volume={59},
   date={1980},
   pages={265-336}
}

\bib{R}{article}{
   author={Roseblade, J. E.},
   title={Group rings of polycyclic groups},
   journal={J. Pure Appl. Algebra},
   volume={3},
   date={1973},
   pages={307-328}
}

\bib{SW}{article}{   author={Sierra, S.}   author={Walton, C.}
   title={The universal enveloping algebra of the Witt algebra is not noetherian},
   journal={Adv. Math.},   volume={262},   date={2014},   pages={239-260}}

\end{biblist}
\end{bibdiv}
\end{document}